\documentclass[11pt]{article}

\usepackage[T1]{fontenc}
\usepackage[a4paper,margin=1in]{geometry}
\usepackage{amsmath,amssymb,amsthm,mathtools,mathrsfs}
\usepackage{tikz}
\usepackage{microtype}
\usepackage[hidelinks]{hyperref}
\AtBeginEnvironment{thebibliography}{\sloppy}

\newtheorem{theorem}{Theorem}[section]
\newtheorem{proposition}[theorem]{Proposition}
\newtheorem{lemma}[theorem]{Lemma}
\newtheorem{remark}[theorem]{Remark}
\newtheorem{introtheorem}[theorem]{Theorem}
\newtheorem{introproposition}[theorem]{Proposition}

\newcommand{\R}{\mathbb R}
\newcommand{\Hh}{\mathbb H}
\newcommand{\Ss}{\mathbb S}
\newcommand{\Ecal}{\mathcal E_{\Lambda_0}}
\newcommand{\Hcal}{\mathcal H_{\Lambda_0}(M)}
\newcommand{\Hlam}{\mathcal H_\lambda(M)}
\newcommand{\Slam}{S_{\lambda,k}(M)}
\newcommand{\Sth}{S_{\Lambda_0,k}(M)}
\newcommand{\dd}{\,d}

\title{Critical GJMS Equations on \(\mathbb H^n\times\mathbb S^m\)}
\author{Q. Hua \and J. Li\thanks{Corresponding author. Email address:
\href{mailto:jungangli@ustc.edu.cn}{\nolinkurl{jungangli@ustc.edu.cn}}.}
\and C. Tao}
\date{}

\begin{document}
\maketitle

\begin{abstract}
Let \(M=\mathbb H^n\times\mathbb S^m\), where \(n\ge2\), \(m\ge1\),
and \(N=n+m\).  Let \(P_k\) be the order-\(2k\) GJMS operator, with
\(1\le k<N/2\), and assume that
\(\Lambda_0=\inf\sigma_{L^2(M)}(P_k)>0\).  We study
\[
 P_kU-\lambda U=|U|^{q-2}U,
 \qquad q=\frac{2N}{N-2k},
 \qquad 0<\lambda\le\Lambda_0,
\]
and the attainment of the associated critical quotient
\(S_{\lambda,k}(M)\).

Let \(S_{N,k}\) be the Euclidean best Sobolev constant.  For
\(0<\lambda<\Lambda_0\), the inequality
\(S_{\lambda,k}(M)<S_{N,k}\) implies attainment and a nontrivial weak
solution.  Localized Euclidean extremals establish this inequality when
\[
 N\ge4k,
 \qquad\text{or}\qquad
 2k+2\le N<4k\quad\text{and}\quad
 \lambda>\Lambda_{\mathrm{loc}},
\]
where \(\Lambda_{\mathrm{loc}}\) is explicit.  If \(N\ge2k+2\) and
\(S_{\Lambda_0,k}(M)<S_{N,k}\), attainment also holds at
\(\lambda=\Lambda_0\) in the threshold form completion.

At the threshold, \(L^2\)-coercivity fails precisely on the constant
spherical eigenspace.  We combine cocompactness for its hyperbolic
coefficient with a profile decomposition relative to the critical
transformations preserving
\(\mathcal A=\mathbb R^{n-1}\times\{0\}\).  Under the threshold hypotheses
above, the strict Euclidean inequality excludes concentration escaping
\(\mathcal A\) from normalized minimizing sequences.  If
\(r_j^{(J)}\) denotes the remainder after the first \(J\) extracted
profiles, then
\[
 \lim_{J\to\infty}\limsup_{j\to\infty}
 \|r_j^{(J)}\|_{L^q(\mathbb R^N)}=0,
\]
which yields compactness modulo the axis-preserving transformations.
\end{abstract}

\medskip
\noindent\textit{Keywords.}
Brezis--Nirenberg problem; GJMS operator; Hardy--Sobolev--Maz'ya inequality;
product manifolds; hyperbolic space; profile decomposition; critical exponent;
singular Yamabe problem.

\medskip
\noindent\textit{2020 Mathematics Subject Classification.}
Primary 35J30, 35J60, 58J05; Secondary 46E35, 53C21.

\section{Introduction}

Let \(P_k\) be the order-\(2k\) GJMS operator
\cite{GrahamJenneMasonSparling1992} on
\[
 M=\Hh^n\times\Ss^m,
\]
where the factors have sectional curvatures \(-1\) and \(+1\),
respectively.  We consider
\begin{equation}\label{eq:intro-equation}
 P_kU-\lambda U=|U|^{q-2}U,
 \qquad
 q=\frac{2N}{N-2k},
 \qquad N=n+m.
\end{equation}
We use the self-adjoint \(L^2(M)\)-realization of \(P_k\), with domain
\(\mathcal D(P_k)\).  Its spectrum is characterized by the quotient
\[
 \sigma_{L^2(M)}(P_k)
 =
 \left\{
 \mu\in\R:
 \inf_{u\in\mathcal D(P_k)\setminus\{0\}}
 \frac{\|(P_k-\mu)u\|_{L^2(M)}}{\|u\|_{L^2(M)}}=0
 \right\}.
\]
We assume that the bottom of this spectrum is positive,
\[
 \Lambda_0:=\inf\sigma_{L^2(M)}(P_k)>0,
\]
and treat both \(0<\lambda<\Lambda_0\) and
\(\lambda=\Lambda_0\).

The product geometry has a direct origin in the singular Yamabe problem.
Write \(y=r\theta\), where \(r>0\) and \(\theta\in\Ss^m\).  Then
\begin{equation}\label{eq:intro-singular-yamabe-product}
 |y|^{-2}(\dd x^2+\dd y^2)
 =
 \frac{\dd x^2+\dd r^2}{r^2}+g_{\Ss^m}
 =
 g_{\Hh^n}+g_{\Ss^m}.
\end{equation}
After conformal compactification, this is the standard identification
\[
 \Ss^N\setminus\Ss^{n-1}
 \cong
 \Hh^n\times\Ss^m.
\]
The product metric is therefore a complete conformal metric on the
complement of a round positive-dimensional singular set.  Its scalar
curvature is
\[
 R_{g_M}
 =
 m(m-1)-n(n-1)
 =
 (m-n)(N-1),
\]
whose sign is not fixed.

Loewner and Nirenberg initiated the construction of complete conformal
metrics of constant negative scalar curvature through boundary blow-up
solutions of the Yamabe equation
\cite{LoewnerNirenberg1974ConformalPDE}.  Schoen and Yau related complete
conformal metrics on domains in the sphere to distributional Yamabe
equations.  Under the bounded-curvature hypothesis of their Theorem~5.1, a
complete conformal metric of scalar curvature \(1\) gives a global weak
solution; their examples include round singular sets arising from products
with compact hyperbolic manifolds
\cite{SchoenYau1988ConformallyFlat}.
Mazzeo developed the asymptotic regularity theory when the singular set is a
smooth submanifold \cite{Mazzeo1991SingularYamabe}.  For the round
subspace appearing above, Bettiol and Piccione used the same conformal
identification to construct infinitely many periodic singular Yamabe
solutions in the range \(n<m\)
\cite{BettiolPiccione2018NoncompactYamabe}.

For \(k=1\) and \(\lambda=0\), equation
\eqref{eq:intro-equation} is, up to the normalization of its right-hand
side, the conformal scalar-curvature equation on this complete singular
background.  For \(k>1\), it is the corresponding higher-order conformal
equation associated with the GJMS operator.  The term \(-\lambda U\) is a
Brézis--Nirenberg spectral perturbation, so a solution with \(\lambda>0\)
should not itself be identified with an unperturbed singular Yamabe metric.
The product nevertheless models compactness and concentration of conformal
factors near a positive-dimensional singular stratum.  The threshold profile
decomposition below is relevant to perturbative geometric problems near such
a stratum; no singular Yamabe existence theorem is asserted here.

The model for \eqref{eq:intro-equation} is the Euclidean
Brézis--Nirenberg problem.  On a bounded domain
\(\Omega\subset\R^d\), the equation
\[
 -\Delta u-\lambda u=|u|^{2^*-2}u,
 \qquad
 u|_{\partial\Omega}=0,
 \qquad
 2^*=\frac{2d}{d-2},
\]
combines a lower-order spectral perturbation with the loss of compactness at
the critical Sobolev exponent.  The determination of the Euclidean best
constant and its extremals goes back to Aubin and Talenti
\cite{Aubin1976Sobolev,Talenti1976BestConstant}.  Brézis and Nirenberg
\cite{BrezisNirenberg1983} showed that, in dimensions \(d\ge4\), every
\(0<\lambda<\lambda_1(\Omega)\) lowers the variational quotient strictly
below the Euclidean Sobolev level; dimension three exhibits a genuine
lower-dimensional obstruction.  The same competition between the cutoff
error of a concentrating extremal and the negative lower-order term persists
for polyharmonic operators.  The works of Pucci--Serrin,
Edmunds--Fortunato--Jannelli, and Gazzola
\cite{PucciSerrin1990CriticalDimensions,
EdmundsFortunatoJannelli1990Biharmonic,Gazzola1998CriticalGrowth}
show how the order of the operator changes the critical dimension and the
parameter ranges in which this comparison succeeds.

Hyperbolic space introduces a different spectral geometry.  Its Laplacian
has continuous spectrum, the isometry group is noncompact, and mass may
escape to infinity without concentrating in a fixed coordinate chart.
For the second-order critical equation on \(\Hh^n\), Mancini and Sandeep
\cite{ManciniSandeep2008SemilinearHn} classified positive finite-energy
solutions.  In their parameterization,
\[
 -\Delta_{\Hh^n}u-\mu_{\mathrm{MS}}u=u^{(n+2)/(n-2)}
\]
such solutions exist precisely for
\[
 \frac{n(n-2)}4<\mu_{\mathrm{MS}}\le\frac{(n-1)^2}{4}
 \qquad(n\ge4),
\]
and are unique up to hyperbolic isometries; in dimension \(n=3\), no positive
finite-energy solution exists.  At the upper endpoint, finite energy is
understood with respect to the completion defined by the threshold quadratic
form.  Thus, for
\(P_1=-\Delta_{\Hh^n}-n(n-2)/4\) and
\(\lambda=\mu_{\mathrm{MS}}-n(n-2)/4\), this range is
\(0<\lambda\le1/4\), including the spectral bottom of \(P_1\).
Bhakta and Sandeep subsequently proved a Struwe-type Palais--Smale
decomposition for the second-order Poincaré--Sobolev functional on
\(\Hh^n\), separating escape by hyperbolic isometries from localized
Euclidean concentration at the critical exponent
\cite{BhaktaSandeep2012PoincareSobolev}.  Bhakta, Ganguly, Karmakar, and
Mazumdar later proved a linear quantitative stability estimate for finite
sums of weakly interacting hyperbolic bubbles in the same second-order
problem.  For \(3\le n\le5\) and \(p>2\), under their parameter and
proximity hypotheses, the \(H^1\)-distance to such a sum is controlled by
the \(H^{-1}\)-norm of the Euler--Lagrange residual; they also showed that
the analogous linear multi-bubble estimate fails for \(1<p\le2\)
\cite{BhaktaGangulyKarmakarMazumdar2025QuantitativeStruwe}.  For higher order equations,
Lu and Yang developed the relevant Paneitz and GJMS Green kernels and
Hardy--Sobolev--Maz'ya inequalities
\cite{LuYang2019PaneitzHSM,LuYang2022GreenGJMS}, while Li, Lu, and Yang
\cite{LiLuYang2022HigherOrderBN} established higher-order
Brézis--Nirenberg results on bounded hyperbolic domains and on the whole
space, together with nonexistence and symmetry results.  Their whole-space
higher-order existence theorem lies strictly below the corresponding
spectral edge.  A recent preprint of Lu and Tao proves attainment in the
critical higher-order Hardy--Sobolev--Maz'ya inequality on the upper
half-space for \(k\ge2\) and \(n\ge2k+2\).  As a consequence, they obtain a
positive solution of the pure-hyperbolic GJMS equation at
\(\alpha=\prod_{i=1}^k(2i-1)^2/4\)
\cite{LuTao2026HighOrderHSM}.  The product geometry considered here
introduces an additional discrete spherical spectrum and a second
concentration regime relative to the singular axis.

The harmonic analysis underlying these developments begins with the
Helgason--Fourier transform, inversion, and Plancherel theory on noncompact
symmetric spaces \cite{Helgason2000GroupsGeometricAnalysis}.  Anker's
\(L^p\) multiplier theorem and sharp estimates for functions of the
Laplacian \cite{Anker1990LpMultipliers,Anker1992SharpLaplacian}, followed by
the heat-kernel and Green-function estimates of Anker and Ji
 \cite{AnkerJi1999HeatGreen}, provide a global calculus adapted to exponential
volume growth.  These results provide the spectral calculus used below.  Set
\[
 A=\sqrt{-\Delta_{\Hh^n}-(n-1)^2/4},
\]
let \(E_A\) be its spectral resolution, and write
\[
 dE_A(\rho)=\mathsf e_A(\rho)\,\dd\rho.
\]
For \(2<s<\infty\), put \(s'=s/(s-1)\).  At hyperbolic frequencies
\(0<\rho\le1\), the estimate of Chen and Hassell
\cite{ChenHassell2018SpectralMeasure} gives
\[
 \|\mathsf e_A(\rho)\|_{L^{s'}(\Hh^n)\to L^s(\Hh^n)}
 \le C_{n,s}\rho^2,
 \qquad 0<\rho\le1.
\]
The factor \(\rho^2\) quantifies the vanishing of this measure at the bottom
of the real hyperbolic spectrum and is used in the low-frequency estimate of
Lemma~\ref{lem:low-frequency}.

The product \(M=\Hh^n\times\Ss^m\) couples this continuous hyperbolic
spectrum to the discrete spherical spectrum.  This coupling is not a
compact perturbation of the pure hyperbolic problem.  If
\[
 \mathscr H_\ell(\Ss^m)
 =
 \ker\!\left(
  -\Delta_{\Ss^m}-\ell(\ell+m-1)
 \right),
\]
then the restriction of \(P_k\) to
\(L^2(\Hh^n)\otimes\mathscr H_\ell(\Ss^m)\) is described by the scalar
multiplier
\[
 p_{k,\ell}(\rho)
 =
 \prod_{r=1}^k
 \left[
  \rho^2+
  \bigl(\ell+\tfrac{m-1}{2}+k+1-2r\bigr)^2
 \right].
\]
Consequently, the spectral bottom is the minimum of
\(p_{k,\ell}(0)\) over \(\ell\ge0\).  The scattering calculation in
Appendix~\ref{app:scattering} shows that the assumption \(\Lambda_0>0\)
forces
\[
 p_{k,0}(0)<p_{k,1}(0)<p_{k,2}(0)<\cdots.
\]
Thus the spherical subspace associated with the spectral bottom is
\[
 \mathscr Y_0
 =
 \mathscr H_0(\Ss^m)
 =
 \operatorname{span}\{1\}.
\]
Accordingly, we decompose \(L^2(M)\) into this subspace and its orthogonal
complement.  On
\(L^2(\Hh^n)\otimes\mathscr Y_0\), the threshold form loses its
\(L^2\)-control at hyperbolic frequency \(\rho=0\); on the orthogonal
spherical-harmonic complement, it remains uniformly coercive in \(H^k(M)\).
This simultaneous low-frequency degeneracy and critical concentration is
the principal analytic difficulty created by the product geometry.

The geometry becomes especially transparent in upper half-space
coordinates.  Conformal covariance transforms the quadratic form into
\begin{equation}\label{eq:intro-conformal-form}
 \int_{\R^N}(-\Delta)^kw\,w\,\dd z
 -
 \lambda\int_{\R^N}\frac{w^2}{|y|^{2k}}\,\dd z,
 \qquad
 z=(x,y)\in\R^{n-1}\times\R^{m+1},
 \qquad
 w\in C_c^\infty(\R^{n+m}\setminus\R^{n-1}).
\end{equation}
The singular set is the positive-dimensional subspace
\[
 \mathcal A=\R^{n-1}\times\{0\}.
\]
The positivity of the threshold form follows from a
Hardy--Sobolev--Maz'ya inequality of Yang
\cite{Yang2021PolyharmonicHSM}.  More
precisely, Yang's theorem is formulated on the complement of an affine
subspace in Euclidean space.  For \(k\ge2\), specialize the dimensions to
\(n_Y=n-1\) and \(m_Y=m+1\), and take the critical exponent \(p=q\).
Indeed, \(n_Y\ge1\), \(m_Y\ge2\), \(2\le k<N/2\), and
\(q=2N/(N-2k)\) is the upper endpoint in Yang's admissible range.
Consequently, in the range used in
Proposition~\ref{prop:positive-threshold}, there is a constant
\(C=C(n,m,k)>0\) such that
\begin{equation}\label{eq:intro-yang-hsm}
 \int_{\R^N}(-\Delta)^kw\,w\,\dd z
 -c_{m+1,k}\int_{\R^N}\frac{|w|^2}{|y|^{2k}}\,\dd z
 \ge
 C\left(\int_{\R^N}|w|^q\,\dd z\right)^{2/q}
\end{equation}
for every
\(w\in C_c^\infty(\R^N\setminus\R^{n-1})\), where
\begin{equation}\label{eq:yang-hardy-coefficient}
 c_{m+1,k}
 =
 \min_{\ell\ge0}
 \prod_{i=0}^{k-1}
 \frac{(m+1+2\ell-2k+4i)^2}{4}.
\end{equation}
The change of index \(i=k-j\) in
\eqref{eq:yang-hardy-coefficient} shows that
\[
 c_{m+1,k}
 =
 \min_{\ell\ge0}
 \prod_{j=1}^k
 \left(\ell+\frac{m-1}{2}+k+1-2j\right)^2
 =\Lambda_0.
\]
Appendix~\ref{app:scattering} gives an independent geometric derivation of
this sharp Hardy coefficient from the scattering operator of
\(\Hh^{N+1}\).  The positive \(L^q\)-remainder in
\eqref{eq:intro-yang-hsm}, however, is the analytic conclusion supplied by
Yang's theorem and is not a consequence of the spectral computation alone.
For \(k=1\), Maz'ya's cylindrical Hardy--Sobolev inequality
\cite{Mazya2011SobolevSpaces}
gives \eqref{eq:intro-yang-hsm} with the coefficient
\((m-1)^2/4=\Lambda_0\).  Thus these inequalities imply the
positive critical embedding at the spectral bottom.  The compactness and
attainment statements at that value of the parameter require the additional
arguments developed in this paper.

Consider a critical Euclidean concentration centered at
\((\xi_j,\eta_j)\) with scale \(\mu_j^{-1}\).  If
\[
 \mu_j|\eta_j|=O(1),
\]
the rescaled limit still sees \(\mathcal A\) and the Hardy term in
\eqref{eq:intro-conformal-form}.  Such concentration is described by
translations parallel to \(\mathcal A\) and simultaneous dilations in all
variables.  If instead
\[
 \mu_j|\eta_j|\longrightarrow\infty,
\]
the singular set leaves every fixed rescaled compact set, and the limiting
quadratic form is the ordinary Euclidean \(\dot H^k\)-energy.  These are the
two geometrically distinct ways in which a critical sequence can escape.

This dichotomy belongs to a broader compactness theory.  Lions'
concentration--compactness principle
\cite{Lions1984ConcentrationCompactness,Lions1985LimitCase} and Struwe's
global compactness theorem \cite{Struwe1984GlobalCompactness} isolate
critical Euclidean bubbles in variational sequences.  Solimini and Gérard
\cite{Solimini1995LorentzCompactness,Gerard1998SobolevDefect} formulated
profile decompositions for bounded Euclidean Sobolev sequences.  The higher
differential order alone does not distinguish the present result from that
theory: Gérard's theorem with \(s=k\) already gives a
translation--dilation decomposition for arbitrary bounded sequences in
\(\dot H^k(\R^N)\) \cite{Gerard1998SobolevDefect}.

For a point singularity and a strictly subcritical second-order Hardy
coefficient, Cao and Peng
\cite{CaoPeng2003SingularGlobalCompactness}
decomposed Palais--Smale sequences into singular bubbles at the origin and
free Euclidean bubbles satisfying the scale--distance condition
\(\kappa_j|y_j|\to\infty\).  Bhakta and Sandeep considered Palais--Smale
sequences for a coercive second-order Poincaré--Sobolev functional strictly
below the hyperbolic spectral bottom and obtained hyperbolic translates
together with localized Euclidean bubbles
\cite{BhaktaSandeep2012PoincareSobolev}.  Their
Hardy--Sobolev--Maz'ya interpretation is made in a cylindrically symmetric
class.  Sandeep and Tintarev
\cite{SandeepTintarev2019ProfileManifolds}
treated arbitrary bounded \(H^{1,2}\)-sequences on complete manifolds of
bounded geometry, separating profiles on manifolds at infinity from local
Euclidean concentrations.

The threshold form in this paper is not equivalent to the standard
\(H^k(M)\)-norm.  On the constant spherical eigenspace,
\[
 p_{k,0}(\rho)-\Lambda_0
 \asymp
 \rho^2(1+\rho^2)^{k-1},
\]
whereas on its orthogonal spherical-harmonic complement the multiplier is
bounded below by a positive multiple of
\((1+\rho^2+\ell^2)^k\).  The first component must therefore be treated in
the homogeneous space
\[
 X_k(\Hh^n)
 =
 \overline{C_c^\infty(\Hh^n)}^{\,\|\cdot\|_{X_k}},
 \qquad
 \|f\|_{X_k}
 =
 \bigl\|A(1+A^2)^{(k-1)/2}f\bigr\|_{L^2(\Hh^n)},
\]
while the second is controlled in
\(H^k(M)\).  This threshold spectral separation is absent from the cited
second-order decompositions.

There is also a change of limiting operator that is measured by
\[
 \mu_j\operatorname{dist}\bigl((\xi_j,\eta_j),\mathcal A\bigr)
 =
 \mu_j|\eta_j|.
\]
Here
\[
 G_{\mathcal A}
 =
 \left\{
 \mathcal T_{b,\sigma}:
 (\mathcal T_{b,\sigma}w)(x,y)
 =
 \sigma^{(N-2k)/2}w\bigl(\sigma(x-b),\sigma y\bigr),
 \qquad
 b\in\R^{n-1},\ \sigma>0
 \right\}
\]
is the group of critical translations and dilations preserving
\(\mathcal A\).
When this quantity remains bounded, the singular axis survives the
rescaling and the concentration is represented by the exact
axis-preserving group \(G_{\mathcal A}\).  When it tends to infinity, the
Hardy potential disappears from every fixed rescaled compact set and the
limiting energy is the free \(\dot H^k(\R^N)\)-energy.  Such a concentration is
not another \(G_{\mathcal A}\)-profile: it belongs to a different limiting
quadratic form.  Theorem~\ref{thm:cylindrical-profile} gives the
\(G_{\mathcal A}\)-profile decomposition after Euclidean concentration far
from the singular axis has been excluded, and
Proposition~\ref{prop:packet-exclusion} proves that
the strict inequality \(S_{\Lambda_0,k}(M)<S_{N,k}\) gives this exclusion
for normalized minimizing sequences.

\subsection*{Main results}

For \(0<\lambda\le\Lambda_0\) and
\(u,v\in C_c^\infty(M)\), let
\[
 Q_\lambda(u,v)
 =
 \int_M(P_ku)v\,\dd V
 -\lambda\int_Muv\,\dd V,
 \qquad
 Q_\lambda(u)=Q_\lambda(u,u),
\]
and define
\[
 \mathcal H_\lambda(M)
 =
 \overline{C_c^\infty(M)}^{\,Q_\lambda^{1/2}},
 \qquad
 S_{\lambda,k}(M)
 =
 \inf_{u\in C_c^\infty(M)\setminus\{0\}}
 \frac{Q_\lambda(u)}{\|u\|_{L^q(M)}^2}.
\]
We denote by
\[
 S_{N,k}
 =
 \inf_{\varphi\in C_c^\infty(\R^N)\setminus\{0\}}
 \frac{\|\varphi\|_{\dot H^k(\R^N)}^2}
      {\|\varphi\|_{L^q(\R^N)}^2}
\]
the Euclidean sharp Sobolev constant.  Throughout the following statements,
\[
 n\ge2,
 \qquad m\ge1,
 \qquad N=n+m,
 \qquad 1\le k<\frac N2,
 \qquad \Lambda_0>0.
\]

In the range \(2k+2\le N<4k\), the lower-dimensional strict-gap condition
involves an explicit coefficient.  Let \({}_2F_1\) denote the Gauss
hypergeometric function, and set
\begin{equation}
 \mathcal A_{N,k}^{\mathrm{loc}}
 =
 \frac{2^{2k}|\Ss^{N-1}|\Gamma(k+1)^2}{N-2k}
 =
 \frac{2^{2k+1}\pi^{N/2}\Gamma(k+1)^2}
 {(N-2k)\Gamma(N/2)},
 \label{eq:A-local}
\end{equation}
\begin{equation}
\begin{aligned}
 \mathcal F_{n,m,k}
 &=
 \frac{2^{m+1}\pi^{(N-1)/2}\Gamma(m/2)}
 {\Gamma((n-1)/2)\Gamma(m)}
 \int_0^1\int_0^{\sqrt{1-t^2}}
 s^{n-2}t^m(1+t)^{-2k}(1-s^2-t^2)^{2k}
 \\
 &\quad\times
 {}_2F_1\!\left(\frac N2,1;k+1;1-s^2-t^2\right)^2
 {}_2F_1\!\left(k,\frac m2;m;\frac{4t}{(1+t)^2}\right)
 \,\dd s\,\dd t,
\end{aligned}
\label{eq:F-local}
\end{equation}
and
\begin{equation}
 \Lambda_{\mathrm{loc}}
 =
 \frac{\mathcal A_{N,k}^{\mathrm{loc}}}{\mathcal F_{n,m,k}}.
 \label{eq:lambda-local}
\end{equation}

\begin{introtheorem}[Compactness below the spectral bottom]
\label{thm:intro-subthreshold-compactness}
Assume
\[
 0<\lambda<\Lambda_0,
 \qquad
 S_{\lambda,k}(M)<S_{N,k}.
\]
Then \(S_{\lambda,k}(M)\) is attained by a nonzero
\(u\in\mathcal H_\lambda(M)=H^k(M)\).  If
\(\|u\|_{L^q(M)}=1\), then
\[
 Q_\lambda(u)=S_{\lambda,k}(M)
\]
and
\[
 Q_\lambda(u,\phi)
 =
 S_{\lambda,k}(M)
 \int_M|u|^{q-2}u\phi\,\dd V
 \qquad
 \bigl(\phi\in\mathcal H_\lambda(M)\bigr).
\]
Consequently,
\[
 U=S_{\lambda,k}(M)^{1/(q-2)}u
\]
is a nontrivial weak solution of
\[
 P_kU-\lambda U=|U|^{q-2}U
\]
in \(\mathcal H_\lambda(M)^*\), where
\[
 \mathcal H_\lambda(M)^*
 =
 \left\{F:\mathcal H_\lambda(M)\to\mathbb R:
 F\ \text{is linear and continuous}\right\}.
\]
\end{introtheorem}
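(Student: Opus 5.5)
We use the direct method on a normalized minimizing sequence and rule out loss of compactness with a concentration-compactness argument. The strict inequality \(S_{\lambda,k}(M)<S_{N,k}\) is what excludes local Euclidean bubbles, and the isometry invariance of \(M\) together with the subadditivity of the \(L^q\)-norm handles escape to hyperbolic infinity.

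First, the norms. Since \(0<\lambda<\Lambda_0\), the spectral theorem gives
\[
Q_\lambda(u)\ge(1-\lambda/\Lambda_0)\,Q_0(u)\qquad\text{for } u\in C_c^\infty(M).
\]
From the multiplier description \(p_{k,\ell}(\rho)\), \(Q_0(u)\) is comparable to \(\|u\|_{H^k(M)}^2\), because \(p_{k,\ell}(\rho)\gtrsim(1+\rho^2+\ell^2)^k\) uniformly once \(\Lambda_0>0\). Hence \(Q_\lambda\) is equivalent to the \(H^k(M)\)-norm, so \(\mathcal H_\lambda(M)=H^k(M)\). Since \(M\) has bounded geometry, the Sobolev embedding \(H^k(M)\hookrightarrow L^q(M)\) holds and \(S_{\lambda,k}(M)>0\).

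Next, take \(u_j\in C_c^\infty(M)\) with \(\|u_j\|_{L^q}=1\) and \(Q_\lambda(u_j)\to S:=S_{\lambda,k}(M)\). The sequence is bounded in \(H^k(M)\). We rule out vanishing by a Lions-type lemma on a uniformly locally finite cover of \(M\) by unit balls \(B(p_i,1)\). If \(\sup_i\|u_j\|_{L^q(B(p_i,1))}\to0\), then by localizing with a partition of unity and using the local sharp Sobolev inequality with arbitrarily small lower-order loss, \(S_{N,k}\le S+o(1)\), which contradicts the strict gap. Concretely, in normal coordinates of small radius the metric is nearly Euclidean, so
\[
\|\chi u\|_{L^q}^2\le(S_{N,k}^{-1}+\varepsilon)\,Q_0(\chi u)+C_\varepsilon\|u\|_{L^2}^2 .
\]
Summing with \(\sum\|\chi_iu_j\|_q^q\le\sup_i\|\chi_iu_j\|_q^{q-2}\cdot\sum\|\chi_iu_j\|_q^2\) shows the \(L^q\)-mass cannot spread out.

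A cleaner route to the same conclusion is to apply the Brezis--Lieb splitting after translation. The isometry group of \(M\) (hyperbolic isometries times rotations) acts transitively on \(\Hh^n\) and preserves \(Q_\lambda\) and \(\|\cdot\|_q\). So we recenter \(u_j\circ g_j\) so that a fixed ball carries \(L^q\)-mass at least \(\delta>0\), and extract a weak limit \(u\) in \(H^k(M)\). We then need \(u\ne0\). Local compactness of \(H^k\hookrightarrow L^q_{\mathrm{loc}}\) fails at the critical exponent, so \(u\ne0\) requires excluding concentration at points. For that we use a localized concentration-compactness profile: if \(|u_j|^q\rightharpoonup\nu=|u|^q+\sum\nu_i\delta_{x_i}\) with \(\nu_i^{2/q}\le S_{N,k}^{-1}\mu_i\), where \(\mu\) is the limit of the top-order energy densities, then the lower-order part \(-\lambda\|u_j\|_2^2\) and the curvature terms of \(P_k\) are compact locally. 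For order \(2k\) the energy density is not pointwise positive, so we use the quadratic form localized by cutoffs, with commutator errors of lower order.

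For the final step, write \(u_j=u+v_j\) with \(v_j\rightharpoonup0\). Weak convergence gives
\[
Q_\lambda(u_j)=Q_\lambda(u)+Q_\lambda(v_j)+o(1),
\]
and Brezis--Lieb gives \(1=\|u\|_q^q+\|v_j\|_q^q+o(1)\). The remainder \(v_j\) either concentrates, in which case its energy is at least \((S_{N,k}-o(1))\|v_j\|_q^2\), or it escapes or persists on \(M\), in which case its energy is at least \(S\|v_j\|_q^2\). In both cases \(Q_\lambda(v_j)\ge S\|v_j\|_q^2-o(1)\). Setting \(a=\|u\|_q^q\), we get
\[
S\ge S\bigl(a^{2/q}+(1-a)^{2/q}\bigr).
\]
Since \(2/q<1\), strict concavity forces \(a\in\{0,1\}\).

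Here \(a=0\) is excluded by the nonvanishing step combined with the strict gap \(S<S_{N,k}\), which prevents all the mass from concentrating in a bubble. So \(a=1\), and lower semicontinuity gives \(Q_\lambda(u)\le S\) with \(\|u\|_q=1\), so \(u\) is a minimizer. The Euler--Lagrange identity follows by differentiating \(t\mapsto Q_\lambda(u+t\phi)/\|u+t\phi\|_q^2\) at \(t=0\). Since \(Q_\lambda\) is a quadratic form, the map is \(C^1\), and density extends the identity to all \(\phi\in\mathcal H_\lambda(M)\). Scaling by \(S^{1/(q-2)}\) then converts \(Q_\lambda(u,\phi)=S\int|u|^{q-2}u\phi\) into the weak equation for \(U\).

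The main obstacle is the higher-order localization, namely proving \(\nu_i^{2/q}\le S_{N,k}^{-1}\mu_i\) for \(P_k\) on a curved background. This requires showing that the cutoff commutators \([P_k,\chi]\) and the metric deviation from Euclidean contribute only \(o(1)\) after concentration. We would control them by the local \(H^{k-1}\) compactness and the equivalence \(Q_0\simeq\|\cdot\|_{H^k}^2\).
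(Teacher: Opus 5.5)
There is a genuine gap at the one step where the hypothesis \(\Slam<S_{N,k}\) must be used. The rest of your set-up matches the ingredients of the paper's proof: norm equivalence, boundedness, the Hilbert plus Br\'ezis--Lieb splitting, and the local defect inequality \(\nu_i^{2/q}\le S_{N,k}^{-1}\mu_i\).

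Your final splitting only uses \(Q_\lambda(v_j)\ge\Slam\|v_j\|_q^2\). That bound holds for every \(v\in H^k(M)\) by density, so the case distinction adds nothing, and you obtain only \(a\in\{0,1\}\) for \(a=\|u\|_q^q\). Everything then rests on excluding \(a=0\), i.e.\ \(u=0\).

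Nonvanishing gives only a fixed ball \(B\) with \(\int_B|u_j|^q\,\dd V\ge\delta\) after recentering. If \(u=0\), your defect analysis places this mass in atoms of \(\nu\) in \(\overline B\). Nothing in your argument prevents the remaining mass, up to \(1-\delta\), from escaping to infinity along other isometries. The strict gap rules out the case where \emph{all} the mass concentrates; it does not rule out this mixed scenario. Your trivial lower bound for \(Q_\lambda(v_j)\) discards exactly the \(S_{N,k}\)-gain needed here. Incidentally, vanishing is excluded by the uniform local Sobolev inequality and \(\|u_j\|_q=1\) alone; the gap plays no role there.

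The paper closes this by applying Lions' first lemma to \(|u_j|^q\,\dd V\) before taking the weak limit. Dichotomy is excluded with slowly varying quadratic cutoffs \(\chi_j^2+\psi_j^2=1\) whose derivatives tend to zero. Then \(Q_\lambda\) splits up to \(o(1)\), and the strict subadditivity \(a^{2/q}+(1-a)^{2/q}>1\) gives a contradiction. The recentered measures are therefore tight, so the defect measure has total mass \(1\). If \(u=0\), that measure is purely atomic with \(\mathfrak e(\{x_i\})\ge S_{N,k}\nu_i^{2/q}\). A fixed quadratic partition around finitely many atoms then yields \(\Slam\ge S_{N,k}\sum_i\nu_i^{2/q}\ge S_{N,k}\), a contradiction.

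Alternatively, you can keep your recentering and split quantitatively:
\begin{itemize}
\item If \(u=0\), choose one atom \(x_1\in\overline B\) with \(\nu_1>0\).
\item Localize with a fixed partition \(\chi_1^2+\chi_0^2=1\) around \(x_1\), and use \(u_j\to0\) in \(H^{k-1}_{\mathrm{loc}}\) to remove the commutator terms.
\item This gives \(\liminf_j Q_\lambda(\chi_1u_j)\ge S_{N,k}\nu_1^{2/q}\) and \(Q_\lambda(\chi_0u_j)\ge\Slam\|\chi_0u_j\|_q^2\), with \(\liminf_j\|\chi_0u_j\|_q^q\) at least \(1-\nu_1\) as the cutoff radius shrinks.
\item Hence \(\Slam\ge S_{N,k}\nu_1^{2/q}+\Slam(1-\nu_1)^{2/q}>\Slam\).
\end{itemize}
Either argument has to be supplied, since this is the heart of the theorem.
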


This statement is proved in
Theorem~\ref{thm:subthreshold-compactness}.

The two localized-extremal calculations used to verify the strict gap can be
stated together as follows.

\begin{introproposition}[Strict gap from localized Euclidean extremals]
\label{prop:intro-localized-strict-gap}
Let \(0<\lambda\le\Lambda_0\).  Then
\[
 S_{\lambda,k}(M)<S_{N,k}
\]
in either of the following cases:
\[
 N\ge4k,
\]
or
\[
 2k+2\le N<4k,
 \qquad
 \lambda>\Lambda_{\mathrm{loc}}.
\]
No conclusion is asserted by these localized constructions when
\(N=2k+1\).
\end{introproposition}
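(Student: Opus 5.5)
The plan is to transfer the problem to upper half-space coordinates and test the conformal form \eqref{eq:intro-conformal-form} on a cut-off Euclidean extremal. By conformal covariance of \(P_k\), \(S_{\lambda,k}(M)\) equals the infimum over \(w\in C_c^\infty(\R^N\setminus\mathcal A)\) of
\[
 \frac{\int_{\R^N}(-\Delta)^kw\,w\,\dd z-\lambda\int_{\R^N}|y|^{-2k}w^2\,\dd z}{\|w\|_{L^q(\R^N)}^2}.
\]
The \(L^q\)-norm is conformally invariant, and the \(L^2(M)\)-mass becomes the Hardy term since \(\dd V_M=|y|^{-N}\dd z\) and \(U=|y|^{(N-2k)/2}w\).

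Let \(\varphi(z)=(1+|z|^2)^{-(N-2k)/2}\) be the Aubin--Talenti extremal. Fix a point \(z_0=(0,e)\) with \(|e|=1\), so that \(\operatorname{dist}(z_0,\mathcal A)=1\), and a cutoff \(\psi\) supported in the unit ball \(B(z_0,1)\), which avoids \(\mathcal A\) except possibly at its boundary. To keep \(w\) in \(C_c^\infty(\R^N\setminus\mathcal A)\), I will take the cutoff supported in a slightly smaller ball, or approximate by density; the Hardy integral is finite there. Set
\[
 w_\varepsilon(z)=\psi(z)\,\varepsilon^{-(N-2k)/2}\varphi\bigl((z-z_0)/\varepsilon\bigr).
\]

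The standard polyharmonic Brézis--Nirenberg expansions (Edmunds--Fortunato--Jannelli, Gazzola) give
\[
 \|w_\varepsilon\|_{\dot H^k}^2=\|\varphi\|_{\dot H^k}^2+O(\varepsilon^{N-2k}),
 \qquad
 \|w_\varepsilon\|_{L^q}^q=\|\varphi\|_{L^q}^q+O(\varepsilon^{N}).
\]
For the Hardy term, \(|y|^{-2k}\approx1\) near \(z_0\), so it is bounded below by \(c\,\varepsilon^{2k}\) when \(N>4k\) and by \(c\,\varepsilon^{2k}|\log\varepsilon|\) when \(N=4k\). In both cases it dominates \(\varepsilon^{N-2k}\), so the quotient drops strictly below \(S_{N,k}\) for every \(\lambda>0\). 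This settles the case \(N\ge4k\).

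The case \(2k+2\le N<4k\) is the main obstacle. Here the Hardy term and the cutoff error are both of order \(\varepsilon^{N-2k}\), so the conclusion requires comparing explicit constants. I would not use a simple cutoff. Instead I would localize with the Green's-function-corrected extremal adapted to the ball \(B(z_0,1)\), namely the projection of the bubble onto the polyharmonic Dirichlet space of the ball.

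The cutoff error then has leading coefficient \(\mathcal A_{N,k}^{\mathrm{loc}}\varepsilon^{N-2k}\). This coefficient arises from the regular part of the Dirichlet Green function of \((-\Delta)^k\) on a ball, and computing it yields the constant \eqref{eq:A-local}. The Hardy gain at leading order is \(\lambda\varepsilon^{N-2k}\) times
\[
 \int_{B(z_0,1)}|y|^{-2k}G(z,z_0)^2\,\dd z,
\]
where \(G\) is, up to normalization, the limiting Dirichlet profile \(\varepsilon^{-(N-2k)/2}w_\varepsilon\). This is exactly the regime \(N<4k\), where the integral converges without the \(\varepsilon\)-scaling.

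To evaluate this integral, I would write the Boggio Green function, whose radial form is \((1-s^2-t^2)^{2k}\,{}_2F_1(N/2,1;k+1;\cdot)\) in suitable coordinates. I would then integrate \(|y|^{-2k}\) over the \(\Ss^m\)-directions in \(y\); this spherical average of the distance from \(e\) produces \({}_2F_1(k,m/2;m;4t/(1+t)^2)\). Carrying out these two steps should reproduce \(\mathcal F_{n,m,k}\).

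Strict inequality then holds if and only if \(\lambda\mathcal F_{n,m,k}>\mathcal A_{N,k}^{\mathrm{loc}}\), that is, \(\lambda>\Lambda_{\mathrm{loc}}\). The step requiring the most care is the error bookkeeping: showing that all cross terms are \(o(\varepsilon^{N-2k})\) using \(N\ge2k+2\), and handling the change of variables that matches the integral to \eqref{eq:F-local}. For \(N=2k+1\) the integrability used in this expansion degenerates, so no conclusion is claimed there.
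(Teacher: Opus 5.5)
Your proposal is correct and is essentially the paper's own argument (Lemma~\ref{lem:strict-high-dim} and Proposition~\ref{prop:strict-low-dim}). For \(N\ge4k\) you use a cut-off Aubin--Talenti bubble at distance one from \(\mathcal A\). For \(2k+2\le N<4k\) you use the boundary-corrected bubble on the unit ball tangent to \(\mathcal A\); this is exactly your \(H_0^k\)-projection, since \(H_\varepsilon\) is polyharmonic with the same Dirichlet data as the bubble. The weighted term comes from the squared Green-type profile via Funk--Hecke, and the passage to smooth functions supported away from the tangency point is handled by \eqref{eq:tangent-ball-hardy}, or simply by Fatou, since only a lower bound on the weighted term is needed.

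One minor correction: the integrability in this expansion does not degenerate at \(N=2k+1\), since the squared majorant is integrable for every \(N<4k\), and none of the error bookkeeping actually uses \(N\ge2k+2\). The statement makes no claim for \(N=2k+1\), so no justification is needed there.
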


This proposition combines Lemma~\ref{lem:strict-high-dim} and
Proposition~\ref{prop:strict-low-dim}.

Combining Theorem~\ref{thm:intro-subthreshold-compactness} with
Proposition~\ref{prop:intro-localized-strict-gap} gives the first existence
theorem.

\begin{introtheorem}[Existence below the spectral bottom]
\label{thm:intro-subthreshold-existence}
Assume \(0<\lambda<\Lambda_0\).  The equation
\[
 P_kU-\lambda U=|U|^{q-2}U
\]
has a nontrivial weak solution
\(U\in\mathcal H_\lambda(M)=H^k(M)\), with the equation understood in
\(\mathcal H_\lambda(M)^*\), provided either
\[
 N\ge4k,
\]
or
\[
 2k+2\le N<4k,
 \qquad
 \Lambda_{\mathrm{loc}}<\lambda<\Lambda_0.
\]
The second interval is nonempty whenever
\[
 \Lambda_0>\Theta_{N,k},
 \qquad
 \Theta_{N,k}
 =
 \frac{2^{4k+1}\Gamma(k+1)^2\Gamma(N/2+2k+1)}
 {(N-2k)\Gamma(N/2)\Gamma(2k+1)}.
\]
This last inequality is a sufficient condition for
\(\Lambda_{\mathrm{loc}}<\Lambda_0\); no necessity is asserted.
\end{introtheorem}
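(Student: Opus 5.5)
The first part of the theorem follows by chaining the two earlier results. Proposition~\ref{prop:intro-localized-strict-gap} gives \(S_{\lambda,k}(M)<S_{N,k}\) under either hypothesis: \(N\ge4k\), or \(2k+2\le N<4k\) with \(\lambda>\Lambda_{\mathrm{loc}}\). With \(0<\lambda<\Lambda_0\), Theorem~\ref{thm:intro-subthreshold-compactness} then yields a minimizer \(u\in\mathcal H_\lambda(M)=H^k(M)\) normalized by \(\|u\|_{L^q}=1\). It also yields the weak Euler--Lagrange identity. The rescaling \(U=S_{\lambda,k}(M)^{1/(q-2)}u\) is nontrivial, because \(S_{\lambda,k}(M)>0\); this positivity holds since \(\lambda<\Lambda_0\) makes \(Q_\lambda\) equivalent to the \(H^k\) norm and the Sobolev embedding on bounded geometry applies. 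Substituting gives \(Q_\lambda(U,\phi)=\int_M|U|^{q-2}U\phi\), because \(S\cdot S^{1/(q-2)}=S^{(q-1)/(q-2)}\). So the existence statement needs nothing beyond these two results.

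The real work is the sufficient condition \(\Lambda_0>\Theta_{N,k}\Rightarrow\Lambda_{\mathrm{loc}}<\Lambda_0\). Since \(\Lambda_{\mathrm{loc}}=\mathcal A^{\mathrm{loc}}_{N,k}/\mathcal F_{n,m,k}\), it suffices to prove a lower bound of the form
\[
 \mathcal F_{n,m,k}\ \ge\ \frac{\mathcal A^{\mathrm{loc}}_{N,k}}{\Theta_{N,k}}
 =\frac{|\Ss^{N-1}|\,\Gamma(N/2)\Gamma(2k+1)}{2^{2k+1}\Gamma(N/2+2k+1)}\cdot\frac{N-2k}{N-2k},
\]
that is, \(\mathcal F_{n,m,k}\ge c\,|\Ss^{N-1}|\,B\)-type constant. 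I would obtain it from elementary pointwise bounds on the integrand in \eqref{eq:F-local}, and I would work in three steps.

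First, since \(N/2,1,k+1>0\) and the argument lies in \([0,1)\), all Taylor coefficients of \({}_2F_1(N/2,1;k+1;x)\) are positive. Hence this factor is at least \(1\).

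Second, the factor \({}_2F_1(k,m/2;m;4t/(1+t)^2)\) is likewise at least \(1\). Together with \((1+t)^{-2k}\ge 2^{-2k}\) on \(t\in[0,1]\), this reduces the bound to
\[
 \mathcal F_{n,m,k}\ \ge\ \frac{2^{m+1-2k}\pi^{(N-1)/2}\Gamma(m/2)}{\Gamma((n-1)/2)\Gamma(m)}
 \int\!\!\int_{s^2+t^2\le1} s^{n-2}t^m(1-s^2-t^2)^{2k}\,\dd s\,\dd t.
\]

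Third, the remaining integral is a Dirichlet integral. It evaluates to
\[
 \tfrac14\,\frac{\Gamma(\tfrac{n-1}2)\Gamma(\tfrac{m+1}2)\Gamma(2k+1)}{\Gamma(\tfrac N2+2k+1)}.
\]
Applying the duplication formula \(\Gamma(m)=2^{m-1}\pi^{-1/2}\Gamma(m/2)\Gamma((m+1)/2)\) then cancels \(\Gamma(m/2)\), \(\Gamma((m+1)/2)\) and \(\Gamma((n-1)/2)\).

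What remains is to compare the resulting constant with \(\mathcal A^{\mathrm{loc}}_{N,k}/\Theta_{N,k}\), computed using \(|\Ss^{N-1}|=2\pi^{N/2}/\Gamma(N/2)\). I expect this bookkeeping of powers of \(2\) and \(\pi\) to be the main obstacle. In particular, the factor \(2^{4k+1}\) in \(\Theta_{N,k}\) must be matched exactly by the loss \(2^{-2k}\) from \((1+t)^{-2k}\) together with the \(2^{2k}\) in \(\mathcal A^{\mathrm{loc}}\). If the constants do not close on the first attempt, I would check whether the crude bound \((1+t)^{-2k}\ge2^{-2k}\) is precisely the one that produced \(\Theta_{N,k}\), and adjust accordingly. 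Once \(\Lambda_{\mathrm{loc}}\le\mathcal A^{\mathrm{loc}}/(\text{lower bound})=\Theta_{N,k}<\Lambda_0\) is established, the interval \((\Lambda_{\mathrm{loc}},\Lambda_0)\) is nonempty. No converse is claimed, so nothing further is needed.
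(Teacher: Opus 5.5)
Your proposal is correct and follows the paper's proof. The existence part chains the strict-gap results (Lemma~\ref{lem:strict-high-dim} and Proposition~\ref{prop:strict-low-dim}) with Theorem~\ref{thm:subthreshold-compactness}, and the sufficient condition comes, as in \eqref{eq:F-local-lower}, from dropping both hypergeometric factors, using $(1+t)^{-2k}\ge2^{-2k}$, Beta integration and duplication; the bookkeeping you worried about closes exactly, since your bound gives $\mathcal F_{n,m,k}\ge 2^{-2k}\pi^{N/2}\Gamma(2k+1)/\Gamma(N/2+2k+1)=\mathcal A^{\mathrm{loc}}_{N,k}/\Theta_{N,k}$, hence $\Lambda_{\mathrm{loc}}\le\Theta_{N,k}$ with no adjustment needed.
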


The proof is given in Theorem~\ref{thm:subthreshold-existence}.

At the spectral bottom, \eqref{eq:intro-yang-hsm} and its first-order
counterpart imply, when \(N\ge2k+2\), that there exists
\(c=c(n,m,k)>0\) such that
\[
 Q_{\Lambda_0}(u)
 \ge c(n,m,k)\|u\|_{L^q(M)}^2
 \qquad\bigl(u\in C_c^\infty(M)\bigr).
\]
Thus \(S_{\Lambda_0,k}(M)>0\), and the continuous embeddings
\[
 \mathcal H_{\Lambda_0}(M)\hookrightarrow L^q(M),
 \qquad
 \mathcal E_{\Lambda_0}\hookrightarrow L^q(\R^N)
\]
hold, where \(\mathcal E_{\Lambda_0}\) is the Euclidean threshold form
completion defined in Section~2.  Compactness at the threshold is the content
of the following theorem.

\begin{introtheorem}[Existence at the spectral bottom]
\label{thm:intro-threshold-attainment}
Assume
\[
 N\ge2k+2,
 \qquad
 S_{\Lambda_0,k}(M)<S_{N,k}.
\]
Then \(S_{\Lambda_0,k}(M)\) is attained by a nonzero
\(u\in\mathcal H_{\Lambda_0}(M)\).  If
\(\|u\|_{L^q(M)}=1\), then
\[
 Q_{\Lambda_0}(u)=S_{\Lambda_0,k}(M)
\]
and
\[
 Q_{\Lambda_0}(u,\phi)
 =
 S_{\Lambda_0,k}(M)
 \int_M|u|^{q-2}u\phi\,\dd V
 \qquad
 \bigl(\phi\in\mathcal H_{\Lambda_0}(M)\bigr).
\]
Consequently,
\[
 U=S_{\Lambda_0,k}(M)^{1/(q-2)}u
\]
is a nontrivial weak solution of
\[
 P_kU-\Lambda_0U=|U|^{q-2}U
\]
in \(\mathcal H_{\Lambda_0}(M)^*\).
\end{introtheorem}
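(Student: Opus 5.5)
We prove Theorem~\ref{thm:intro-threshold-attainment} by the direct method on a normalized minimizing sequence, transferred to the upper half-space picture \eqref{eq:intro-conformal-form}. Take \(u_j\in C_c^\infty(M)\) with \(\|u_j\|_{L^q(M)}=1\) and \(Q_{\Lambda_0}(u_j)\to S_{\Lambda_0,k}(M)\). Conformal covariance turns \(u_j\) into \(w_j\in C_c^\infty(\R^N\setminus\mathcal A)\) with \(\|w_j\|_{L^q(\R^N)}=1\) and with the Euclidean threshold form
\[
\int_{\R^N}(-\Delta)^kw_j\,w_j\,\dd z-\Lambda_0\int_{\R^N}\frac{w_j^2}{|y|^{2k}}\,\dd z
\]
converging to the same limit. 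By \eqref{eq:intro-yang-hsm} (Maz'ya's inequality when \(k=1\)), the sequence \((w_j)\) is bounded in \(\mathcal E_{\Lambda_0}\), which embeds continuously into \(L^q(\R^N)\). Every element of \(G_{\mathcal A}\) preserves both the threshold form and the \(L^q\) norm. It also corresponds to an isometry of \(\Hh^n\) (a parabolic translation or a dilation) acting trivially on \(\Ss^m\). Hence it suffices to find \(g_j\in G_{\mathcal A}\) such that \(g_jw_j\) converges strongly in \(L^q\), up to a subsequence.

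First we rule out concentration that escapes the axis. Proposition~\ref{prop:packet-exclusion} applies here, because \(N\ge2k+2\) and \(S_{\Lambda_0,k}(M)<S_{N,k}\). It shows that normalized minimizing sequences have no Euclidean profiles with \(\mu_j|\eta_j|\to\infty\). The heuristic is that such a profile only sees the free \(\dot H^k\) energy, so its energy is at least \(S_{N,k}\) times its \(L^q\) mass to the power \(2/q\). This contradicts minimality because the threshold form is nonnegative. Theorem~\ref{thm:cylindrical-profile} then yields a \(G_{\mathcal A}\)-profile decomposition
\[
w_j=\sum_{i=1}^J g^{(i)}_j\psi^{(i)}+r^{(J)}_j,
\]
with pairwise asymptotically orthogonal \(g^{(i)}_j\in G_{\mathcal A}\) and with \(\lim_{J\to\infty}\limsup_{j\to\infty}\|r^{(J)}_j\|_{L^q}=0\). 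The decomposition also gives the asymptotic orthogonality (Pythagorean splitting) of the quadratic threshold form. Inside that theorem, the constant spherical mode is handled by cocompactness in \(X_k(\Hh^n)\) together with the low-frequency bound (Lemma~\ref{lem:low-frequency}), while the complementary modes are controlled in \(H^k(M)\).

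Next we run the standard convexity argument. By the \(L^q\)-decoupling for orthogonal profiles and the vanishing of the remainder,
\[
1=\sum_i\|\psi^{(i)}\|_q^q .
\]
Energy splitting, together with the nonnegativity of the form on remainders, gives
\[
S_{\Lambda_0,k}(M)\ge\sum_i Q(\psi^{(i)})\ge S_{\Lambda_0,k}(M)\sum_i\|\psi^{(i)}\|_q^2 .
\]
Since \(2/q<1\), strict concavity forces exactly one nonzero profile \(\psi\) with \(\|\psi\|_q=1\). Then \(\|(g^{(1)}_j)^{-1}w_j-\psi\|_q\to0\) and \(Q(\psi)\le S_{\Lambda_0,k}(M)\) by weak lower semicontinuity, so \(\psi\) is a minimizer. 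Pulling \(\psi\) back to \(M\) gives \(u\in\mathcal H_{\Lambda_0}(M)\) with \(\|u\|_{L^q(M)}=1\) and \(Q_{\Lambda_0}(u)=S_{\Lambda_0,k}(M)\).

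Finally, the Euler--Lagrange identity follows by differentiating
\[
t\mapsto\frac{Q_{\Lambda_0}(u+t\phi)}{\|u+t\phi\|_q^2}
\]
at \(t=0\), for \(\phi\in\mathcal H_{\Lambda_0}(M)\). This is legitimate because \(\mathcal H_{\Lambda_0}(M)\hookrightarrow L^q(M)\), so \(\int_M|u|^{q-2}u\phi\,\dd V\) is continuous in \(\phi\). Scaling by \(S_{\Lambda_0,k}(M)^{1/(q-2)}\) then gives the weak solution \(U\).

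The main obstacle is the exclusion step together with exact energy splitting at the threshold. The form is not coercive in \(L^2\) on \(\mathscr Y_0\), so weak limits and remainders must be controlled in the homogeneous space \(X_k\). For that reason we rely on Proposition~\ref{prop:packet-exclusion} and Theorem~\ref{thm:cylindrical-profile} rather than a Brezis--Lieb argument.
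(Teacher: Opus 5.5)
Your proposal is correct and follows essentially the paper's route: Proposition~\ref{prop:packet-exclusion}, then Theorem~\ref{thm:cylindrical-profile}, then strict concavity of \(t^{2/q}\), and finally differentiation of the quotient. The only difference is that you run the concavity argument over the full profile splitting, whereas the paper uses the decomposition just to obtain a nonzero recentered weak limit (Proposition~\ref{prop:compactness-modulo-group}) and then applies a two-piece Br\'ezis--Lieb splitting. One small correction to your heuristic for the exclusion step: ruling out an escaping profile of small mass needs the strict gap together with the threshold Sobolev bound \(\|r_j\|_{\Ecal}^2\ge \Sth\|r_j\|_q^2\) on the remainder, not merely nonnegativity of the form.
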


This statement is proved in Theorem~\ref{thm:main}.

The strict-gap hypothesis in this threshold theorem holds, in particular,
when \(N\ge4k\).  It also holds when
\[
 2k+2\le N<4k,
 \qquad
 \Lambda_0>\Lambda_{\mathrm{loc}},
\]
by applying the combined strict-gap proposition with
\(\lambda=\Lambda_0\).

At the spectral bottom, the natural space is the completion of
\(C_c^\infty(M)\) in the threshold form norm rather than \(H^k(M)\).  On the
coefficient corresponding to \(\mathscr Y_0=\operatorname{span}\{1\}\),
the form is equivalent to
the homogeneous hyperbolic norm
\[
 \|f\|_{X_k}^2
 =
 \int_0^\infty\!\int_{\Ss^{n-1}}
 \rho^2(1+\rho^2)^{k-1}
 |\widehat f(\rho,\omega)|^2
 |\mathbf c(\rho)|^{-2}\,\dd\omega\,\dd\rho.
\]
Here \(\widehat f\) is the Helgason--Fourier transform,
\(\rho\ge0\) is its spectral parameter,
\(\omega\in\Ss^{n-1}\) is the boundary variable, and \(\mathbf c\) is the
Harish--Chandra \(c\)-function; the normalization is recalled in Section~2.
A low-, intermediate-, and high-frequency decomposition proves cocompactness
of this coefficient function under hyperbolic recentering.  The orthogonal
spherical-harmonic complement is controlled in \(H^k(M)\); bounded-geometry
localization and the refined Sobolev inequality of Meyer--Gérard--Oru
\cite{MeyerGerardOru1997RefinedSobolev} show that any remaining critical
concentration is a Euclidean concentration satisfying
\(\mu_j|\eta_j|\to\infty\).

After such Euclidean escape has been excluded, successive weak limits under
the group preserving \(\mathcal A\) give pairwise orthogonal profiles,
energy and \(L^q\)-mass decoupling, and a remainder that tends to zero in
\(L^q\).  For a normalized threshold minimizing sequence, the strict gap
\[
 S_{\Lambda_0,k}(M)<S_{N,k}
\]
excludes every Euclidean profile escaping \(\mathcal A\), and the
decomposition yields a nonzero weak limit after an axis-preserving recentering.
Hilbert-space splitting, the Brézis--Lieb lemma \cite{BrezisLieb1983}, and
the strict concavity of \(t^{2/q}\) show that the entire critical mass lies in
that limit.  This proves the threshold theorem.

The present paper is organized as follows.  Section~2 records the conformal
model, the variational spaces, and the Helgason--Fourier calculus used
throughout.  Section~3 proves compactness below the spectral bottom and
establishes the localized strict inequalities.  Section~4 proves the
positive critical embedding at \(\lambda=\Lambda_0\).
Section~5 separates the constant spherical eigenspace from its coercive
orthogonal complement and proves the required hyperbolic cocompactness
estimate.  Section~6 constructs the profile decomposition relative to the
axis-preserving group.  Section~7 excludes Euclidean concentration escaping
the singular axis for minimizing sequences satisfying the strict Euclidean
inequality.  Section~8 proves attainment and derives the Euler--Lagrange
equation.  Appendix~\ref{app:scattering} derives the product GJMS multiplier,
the spectral bottom, and the sharp cylindrical Hardy coefficient by
scattering on \(\Hh^{N+1}\).

\paragraph*{Acknowledgments.}
The second author was supported by the National Natural Science Foundation
of China (Grant No.~1257112).  The third author was supported by the Research
Foundation for Youth Scholars of Beijing Technology and Business University
(No.~RFYS2025) and the Research Foundation for Advanced Talents of Beijing
Technology and Business University (No.~19008024091).

\section{Geometric and variational setting}

Write the upper half-space model as
\[
 \Hh^n
 =
 \{(x,r):x\in\R^{n-1},\ r>0\},
 \qquad
 g_{\Hh}=r^{-2}(\dd x^2+\dd r^2).
\]
For
\[
 z=(x,y)\in
 \R^{n-1}\times(\R^{m+1}\setminus\{0\}),
 \qquad
 r=|y|,
 \qquad
 \theta=\frac{y}{|y|},
\]
the product metric satisfies
\[
 g_M=r^{-2}(\dd x^2+\dd y^2).
\]
We write \(\dd V=\dd V_{g_M}\) for its Riemannian volume element.

We use the sign convention \(-\Delta\ge0\).  The letters \(c\) and \(C\)
denote positive constants that may change from line to line; their dependence
is stated whenever it affects a conclusion.  For nonnegative quantities
\(F\) and \(G\), the notation \(F\asymp G\) means that
\[
 cG\le F\le CG
\]
for positive constants \(c,C\) that are uniform in the variables under
consideration and depend only on the fixed parameters indicated in context.
Set
\[
 \gamma=\frac{N-2k}{2},
 \qquad
 a_H=\frac{n-1}{2},
 \qquad
 b_S=\frac{m-1}{2},
 \qquad
 A=\sqrt{-\Delta_{\Hh^n}-a_H^2}.
\]
We recall the spectral representation of \(A\).  Let
\(B=\partial_\infty\Hh^n\simeq\Ss^{n-1}\), and let
\(\langle h,\omega\rangle\) denote the horocycle bracket, normalized so that
the functions
\[
 e_{\rho,\omega}(h)
 =
 \exp\!\bigl((i\rho+a_H)\langle h,\omega\rangle\bigr)
\]
satisfy
\[
 -\Delta_{\Hh^n}e_{\rho,\omega}
 =
 (\rho^2+a_H^2)e_{\rho,\omega}.
\]
For \(f\in C_c^\infty(\Hh^n)\), its Helgason--Fourier transform is
\begin{equation}\label{eq:helgason-fourier-transform}
 \widehat f(\rho,\omega)
 =
 \int_{\Hh^n}
 f(h)\overline{e_{\rho,\omega}(h)}\,\dd V_{\Hh^n}(h),
 \qquad (\rho,\omega)\in\R\times B.
\end{equation}
With normalized surface measure on \(B\), there is a dimensional constant
\(d_n>0\) such that the inversion and Plancherel formulas are
\begin{align}
 f(h)
 &=
 d_n\int_{\R}\!\int_B
 \widehat f(\rho,\omega)e_{\rho,\omega}(h)
 |\mathbf c(\rho)|^{-2}\,\dd\omega\,\dd\rho,
 \label{eq:helgason-inversion}\\
 \|f\|_{L^2(\Hh^n)}^2
 &=
 d_n\int_{\R}\!\int_B
 |\widehat f(\rho,\omega)|^2
 |\mathbf c(\rho)|^{-2}\,\dd\omega\,\dd\rho.
 \label{eq:helgason-plancherel}
\end{align}
Here \(\mathbf c\) is the Harish--Chandra \(c\)-function.  For real
hyperbolic space its Plancherel density can be written, up to a positive
dimensional normalization, as
\begin{equation}\label{eq:harish-chandra-density}
 |\mathbf c(\rho)|^{-2}
 =
 C_n
 \left|
 \frac{\Gamma(a_H+i\rho)}{\Gamma(i\rho)}
 \right|^2.
\end{equation}
Formula \eqref{eq:harish-chandra-density} is read for \(\rho\ne0\), with
the value at \(\rho=0\) defined by continuity.
Consequently,
\begin{equation}\label{eq:c-function-asymptotics}
 |\mathbf c(\rho)|^{-2}\asymp \rho^2
 \quad(0<\rho\le1),
 \qquad
 |\mathbf c(\rho)|^{-2}\asymp \rho^{n-1}
 \quad(\rho\ge1).
\end{equation}
Equations \eqref{eq:helgason-inversion} and
\eqref{eq:helgason-plancherel} are the rank-one specialization of the
Fourier inversion and Plancherel theorems in
\cite[Chapter~III, Section~1]{Helgason1994GeometricAnalysis}; the explicit
real-hyperbolic density in \eqref{eq:harish-chandra-density} follows from the
rank-one Harish--Chandra \(\mathbf c\)-function formula in
\cite[Chapter~IV, Section~6]{Helgason2000GroupsGeometricAnalysis}.  The Weyl
symmetry identifies the
positive and negative spectral halves.  In all subsequent norm formulas we
use the equivalent positive spectral realization, integrate over
\(\rho\ge0\), and absorb the fixed factor into the boundary measure.  In this
normalization, the functional calculus is diagonal:
\begin{equation}\label{eq:helgason-functional-calculus}
 \widehat{Af}(\rho,\omega)=\rho\widehat f(\rho,\omega),
 \qquad
 \widehat{\Phi(A)f}(\rho,\omega)
 =
 \Phi(\rho)\widehat f(\rho,\omega)
\end{equation}
for every bounded Borel function \(\Phi\), initially on the natural spectral
domain and then by closure.  We henceforth use the nonnegative spectral
parameter \(\rho\) for \(A\).

The special-Einstein-product factorization of Case and Malchiodi
\cite[Corollary~3.3]{CaseMalchiodi2024GJMS}, applied with unit curvature
parameter and spherical eigenvalue \(\ell(\ell+m-1)\), yields the multiplier
below.  Appendix~\ref{app:scattering} derives the same formula independently
from the scattering operator of \(\Hh^{N+1}\).
\begin{equation}\label{eq:spherical-symbol}
 p_{k,\ell}(\rho)
 =
 \prod_{j=1}^k
 \left[
  \rho^2+
  \bigl(\ell+b_S+k+1-2j\bigr)^2
 \right].
\end{equation}
Consequently,
\begin{equation}\label{eq:spectral-bottom}
 \Lambda_0
 =
 \inf\sigma_{L^2(M)}(P_k)
 =
 \min_{\ell\ge0}p_{k,\ell}(0).
\end{equation}
This is a continuous spectral bottom, not an \(L^2\)-eigenvalue.

For \(u,v\in C_c^\infty(M)\), define
\[
 Q_\lambda(u,v)
 =
 \int_M(P_ku)v\,\dd V
 -
 \lambda\int_Muv\,\dd V,
 \qquad
 Q_\lambda(u)=Q_\lambda(u,u).
\]
For \(0<\lambda\le\Lambda_0\), set
\[
 \|u\|_\lambda:=Q_\lambda(u)^{1/2},
 \qquad u\in C_c^\infty(M).
\]
The spectral theorem and the fact that \(\Lambda_0\) is not an
\(L^2\)-eigenvalue show that \(\|\cdot\|_\lambda\) is a norm on
\(C_c^\infty(M)\).  We define
\begin{equation}\label{eq:energy-space-completion}
 \mathcal H_\lambda(M)
 :=
 \overline{C_c^\infty(M)}^{\,\|\cdot\|_\lambda},
 \qquad 0<\lambda\le\Lambda_0,
\end{equation}
where the bar denotes closure in the Hilbert completion associated with
\(\|\cdot\|_\lambda\).  For \(0<\lambda<\Lambda_0\), this norm is
equivalent to the \(H^k(M)\)-norm.  At \(\lambda=\Lambda_0\), the
completion \(\Hcal\) does not control the full \(L^2(M)\)-norm.

The general conformal covariance law for the GJMS operator is proved in
\cite{GrahamJenneMasonSparling1992}.  Applying this covariance law to
$g_M=r^{-2}(\dd x^2+\dd y^2)$ and using
$\dd V_{g_M}=r^{-N}\dd z$ shows that the conformal transform
\[
 (\mathscr Cw)(x,r,\theta)
 =
 r^\gamma w(x,r\theta)
\]
satisfies
\begin{align}
 Q_\lambda(\mathscr Cw)
 &=
 \int_{\R^N}(-\Delta)^kw\,w\,\dd z
 -
 \lambda
 \int_{\R^N}\frac{w^2}{|y|^{2k}}\,\dd z,
 \label{eq:conformal-energy}\\
 \|\mathscr Cw\|_{L^q(M)}
 &=
 \|w\|_{L^q(\R^N)}.
 \label{eq:conformal-lq}
\end{align}
Let
\[
 \Ecal
 =
 \overline{
 C_c^\infty(\R^N\setminus\R^{n-1})
 }^{\,\|\cdot\|_{\Ecal}},
\]
where
\[
 \|w\|_{\Ecal}^2
 =
 \|w\|_{\dot H^k(\R^N)}^2
 -
 \Lambda_0
 \int_{\R^N}\frac{w^2}{|y|^{2k}}\,\dd z
\]
and
\[
 \|w\|_{\dot H^k}^2
 =
 \int_{\R^N}(-\Delta)^kw\,w\,\dd z.
\]
Here \(\dot H^k(\R^N)\) is the completion of \(C_c^\infty(\R^N)\) in
this norm.  For an open set \(\Omega\subset\R^N\), we use
\[
 H_0^k(\Omega)
 =\overline{C_c^\infty(\Omega)}^{\,H^k(\Omega)}.
\]
By \eqref{eq:conformal-energy}--\eqref{eq:conformal-lq} and density,
\(\mathscr C\) extends to a unitary isomorphism
\[
 \mathscr C:\Ecal\longrightarrow\Hcal.
\]
Its inverse on the completed spaces is denoted by \(\mathscr C^{-1}\).

For \(0<\lambda\le\Lambda_0\), define
\begin{equation}\label{eq:variational-constant}
 S_{\lambda,k}(M)
 =
 \inf_{u\in C_c^\infty(M)\setminus\{0\}}
 \frac{Q_\lambda(u)}{\|u\|_{L^q(M)}^2}.
\end{equation}
This definition applies both in the coercive range and at the spectral
threshold; in particular, \(S_{\Lambda_0,k}(M)=\Sth\).  The Euclidean
sharp Sobolev constant is
\begin{equation}\label{eq:euclidean-constant}
 S_{N,k}
 =
 \inf_{\varphi\in C_c^\infty(\R^N)\setminus\{0\}}
 \frac{\|\varphi\|_{\dot H^k(\R^N)}^2}
      {\|\varphi\|_{L^q(\R^N)}^2}.
\end{equation}

\section{Extremals below the spectral bottom}

\begin{lemma}[Subthreshold coercivity]
\label{lem:subthreshold-coercivity}
Assume $\Lambda_0>0$ and $0<\lambda<\Lambda_0$.  Then
\begin{equation}
 Q_\lambda(u)
 \ge
 \left(1-\frac{\lambda}{\Lambda_0}\right)
 \int_M(P_ku)u\,\dd V
 \qquad (u\in C_c^\infty(M)).
 \label{eq:subthreshold-coercivity}
\end{equation}
Moreover, $Q_\lambda^{1/2}$ is equivalent to the $H^k(M)$ norm; in
particular, $\Hlam=H^k(M)$ as sets with equivalent Hilbert norms.
\end{lemma}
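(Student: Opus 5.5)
The plan is to argue entirely through the spectral theorem for the self-adjoint $L^2(M)$-realization of $P_k$, whose spectrum lies in $[\Lambda_0,\infty)$ by definition of $\Lambda_0$.

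\emph{Step 1: the coercivity bound.} For $u\in C_c^\infty(M)\subset\mathcal D(P_k)$, let $E$ denote the spectral resolution of $P_k$ and write
\[
 \int_M(P_ku)u\,\dd V=\int_{[\Lambda_0,\infty)}\mu\,\dd\langle E(\mu)u,u\rangle .
\]
Since $\mu\ge\Lambda_0$ on the support of the spectral measure, we have $\lambda\le(\lambda/\Lambda_0)\mu$ there. Integrating this against $\dd\langle E(\mu)u,u\rangle$ gives
\[
 \lambda\|u\|_{L^2}^2\le\frac{\lambda}{\Lambda_0}\int_M(P_ku)u\,\dd V .
\]
Subtracting from $\int_M(P_ku)u$ yields \eqref{eq:subthreshold-coercivity}. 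The same bound also gives $Q_\lambda(u)\ge(\Lambda_0-\lambda)\|u\|_{L^2}^2$.

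\emph{Step 2: equivalence with $H^k(M)$.} The upper bound $Q_\lambda(u)\le\int(P_ku)u\le C\|u\|_{H^k}^2$ is immediate, since $P_k$ is a $k$-th order polynomial expression in covariant derivatives with bounded coefficients on the homogeneous (hence bounded-geometry) manifold $M$. For the lower bound we need $\|u\|_{H^k(M)}^2\le C\int_M(P_ku)u\,\dd V$.

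\emph{Step 3: the lower bound via the spherical decomposition.} Decompose $u$ into spherical harmonics and apply the Helgason--Fourier transform in the hyperbolic factor. The quadratic form then becomes
\[
 \sum_\ell\int p_{k,\ell}(\rho)\,|\widehat{u_\ell}|^2\,|\mathbf c(\rho)|^{-2},
\]
by the multiplier \eqref{eq:spherical-symbol}. Because the shifts $(\ell+b_S+k+1-2j)^2$ are bounded by $C(1+\ell^2)$ and each factor is at least $p_{k,\ell}(0)^{1/k}$-type positive, one gets
\[
 p_{k,\ell}(\rho)\ge c\,(1+\rho^2+\ell^2)^k
\]
wherever $p_{k,\ell}(\rho)$ is bounded away from zero. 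Uniformly, $\min p_{k,\ell}(0)=\Lambda_0>0$, and since $p_{k,\ell}(\rho)\sim(\rho^2+\ell^2)^k$ for large $\rho+\ell$, the ratio $p_{k,\ell}(\rho)/(1+\rho^2+\ell^2)^k$ is continuous, positive, and tends to $1$ at infinity, so it is bounded below. The right-hand side $(1+\rho^2+\ell^2)^k$ is, up to constants, the symbol of $(1-\Delta_{\Hh^n}-\Delta_{\Ss^m})^k$ (using $\rho^2+a_H^2=-\Delta_{\Hh^n}$ and $\ell(\ell+m-1)\asymp\ell^2$ for $\ell\ge1$). Hence $\int(P_ku)u\asymp\|(1-\Delta_M)^{k/2}u\|_{L^2}^2$. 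Finally, on a manifold of bounded geometry such as $M$, the operator norm $\|(1-\Delta_M)^{k/2}u\|_{L^2}$ is equivalent to the covariant $H^k(M)$ norm; this is standard elliptic regularity with uniformly bounded curvature and its derivatives.

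\emph{Step 4: the completion.} Combining Steps 1--3 gives $c\|u\|_{H^k}^2\le Q_\lambda(u)\le C\|u\|_{H^k}^2$ on $C_c^\infty(M)$. Since $M$ is complete, $C_c^\infty(M)$ is dense in $H^k(M)$, so the completions coincide and $\mathcal H_\lambda(M)=H^k(M)$ with equivalent norms.

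The main obstacle is Step 3: the uniform positivity of $p_{k,\ell}(\rho)/(1+\rho^2+\ell^2)^k$, which needs care because individual factors $\rho^2+(\ell+b_S+k+1-2j)^2$ may vanish at $\rho=0$ for small $\ell$. I would handle this by compactness in finitely many $(\ell,\rho)$ together with $\Lambda_0>0$, and then cite the bounded-geometry identification of Sobolev norms.
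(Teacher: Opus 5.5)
Your proposal is correct and follows the paper's proof. The spectral-bottom inequality $\int_M(P_ku)u\,\dd V\ge\Lambda_0\|u\|_2^2$ gives the coercivity bound. The two-sided estimate $p_{k,\ell}(\rho)\asymp(1+\rho^2+\ell^2)^k$, with $\Lambda_0>0$ ruling out vanishing at $\rho=0$ and compactness handling the finitely many low degrees, combined with Plancherel on $\Hh^n$ and $\Ss^m$, then gives equivalence with the $H^k(M)$ norm. You spell out two points the paper leaves implicit: the bounded-geometry identification of $\|(1-\Delta_M)^{k/2}u\|_2$ with the covariant $H^k$ norm, and the density of $C_c^\infty(M)$ in $H^k(M)$.
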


\begin{proof}
By the definition of the spectral bottom,
\[
 \int_M(P_ku)u\,\dd V
 \ge \Lambda_0\|u\|_2^2.
\]
Hence
\[
 \lambda\|u\|_2^2
 \le
 \frac{\lambda}{\Lambda_0}
 \int_M(P_ku)u\,\dd V,
\]
which proves \eqref{eq:subthreshold-coercivity}.  The multiplier
\eqref{eq:spherical-symbol} also gives constants $c,C>0$, depending only on
$n,m,k$, such that
\[
 c(1+\rho^2+\ell^2)^k
 \le p_{k,\ell}(\rho)
 \le C(1+\rho^2+\ell^2)^k
 \qquad(\rho\ge0,\ \ell\in\mathbb N_0).
\]
The upper bound follows directly from the product formula.  For the lower
bound, $\Lambda_0>0$ excludes a zero factor at $\rho=0$, and the finitely
many remaining low spherical degrees have a positive minimum.  The
Plancherel theorems on \(\Hh^n\) and \(\Ss^m\) therefore show that the
quadratic form of $P_k$ is equivalent to the squared $H^k(M)$-norm.
Combining this with
\eqref{eq:subthreshold-coercivity} proves the last assertion.
\end{proof}

\begin{theorem}[Subthreshold compactness criterion]
\label{thm:subthreshold-compactness}
Assume
\[
 \Lambda_0>0,
 \qquad
 0<\lambda<\Lambda_0,
 \qquad
 \Slam<S_{N,k}.
\]
Then $\Slam$ is attained by a nonzero $u\in\Hlam$.  After normalizing
$\|u\|_q=1$,
\[
 Q_\lambda(u)=\Slam
\]
and
\begin{equation}
 Q_\lambda(u,\phi)
 =
 \Slam\int_M|u|^{q-2}u\phi\,\dd V
 \qquad(\phi\in\Hlam).
 \label{eq:subthreshold-EL}
\end{equation}
Consequently, $U=\Slam^{1/(q-2)}u$ is a nontrivial weak solution of
\[
 P_kU-\lambda U=|U|^{q-2}U.
\]
\end{theorem}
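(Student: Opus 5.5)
We will run the direct method on a normalized minimizing sequence and use the strict gap $\Slam<S_{N,k}$ to rule out concentration. Coercivity (Lemma~\ref{lem:subthreshold-coercivity}) lets us work in $H^k(M)$, and the noncompactness of $\Hh^n$ is handled with the transitive isometry group of $\Hh^n$ acting on the first factor.

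\emph{Setup and non-vanishing.} Take $u_j\in C_c^\infty(M)$ with $\|u_j\|_q=1$ and $Q_\lambda(u_j)\to\Slam$. By Lemma~\ref{lem:subthreshold-coercivity}, $(u_j)$ is bounded in $H^k(M)$. Cover $M$ by the uniformly locally finite family $B_i=B_{\Hh^n}(h_i,1)\times\Ss^m$, with $h_i$ a $1$-net of $\Hh^n$.

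We first rule out vanishing in the Lions sense, $\sup_i\|u_j\|_{L^2(B_i)}\to0$. In that case a localized Sobolev inequality would force every limiting $L^q$-mass to come from concentration at points. The inequality to use is the bounded-geometry form: for each $\epsilon>0$ there is $C_\epsilon$ with
\[
 \|\chi u\|_{q}^2\le (S_{N,k}^{-1}+\epsilon)\,\|\chi u\|^2_{\dot H^k}+C_\epsilon\|\chi u\|^2_{H^{k-1}}
\]
for cutoffs $\chi$ supported in small charts. Together with the Lions concentration--compactness principle on the uniformly locally compact manifold, this gives $\Slam\ge S_{N,k}$, because at point concentrations the lower-order term $\lambda\int u^2$ and all curvature corrections to $P_k$ are negligible. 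That contradicts the strict gap.

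\emph{Recentering and the weak limit.} Hence there are $h_j\in\Hh^n$ with $\|u_j\|_{L^2(B(h_j,1)\times\Ss^m)}\ge\delta>0$. Choose hyperbolic isometries $\tau_j$ with $\tau_j(o)=h_j$ and replace $u_j$ by $u_j\circ(\tau_j\times\mathrm{id})$. Since $P_k$, $Q_\lambda$ and the $L^q$ norm are all invariant, this is still a minimizing sequence. After passing to a subsequence, $u_j\rightharpoonup u$ in $H^k(M)$. By Rellich on the compact set $\overline{B(o,1)}\times\Ss^m$, $u\neq0$. We also arrange $u_j\to u$ almost everywhere.

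\emph{Mass splitting.} Write $v_j=u_j-u$. Hilbert-space orthogonality gives
\[
 Q_\lambda(u_j)=Q_\lambda(u)+Q_\lambda(v_j)+o(1),
\]
and Brezis--Lieb gives
\[
 1=\|u\|_q^q+\|v_j\|_q^q+o(1).
\]
Set $t=\|u\|_q^q\in(0,1]$. Using $Q_\lambda\ge\Slam\|\cdot\|_q^2$ on both pieces,
\[
 \Slam\ge\Slam\bigl(t^{2/q}+(1-t)^{2/q}\bigr).
\]
Since $2/q<1$, strict concavity of $s\mapsto s^{2/q}$ makes $t^{2/q}+(1-t)^{2/q}>1$ whenever $0<t<1$, so $t=1$. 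Therefore $Q_\lambda(u)\le\Slam$, which means $u$ is a minimizer and $\|u\|_q=1$.

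\emph{Euler--Lagrange equation and the obstacle.} To get \eqref{eq:subthreshold-EL}, differentiate
\[
 \varepsilon\mapsto\frac{Q_\lambda(u+\varepsilon\phi)}{\|u+\varepsilon\phi\|_q^2}
\]
at $\varepsilon=0$. This works for $\phi\in\Hlam=H^k(M)\subset L^q(M)$, since $L^q$ norms are $C^1$. Scaling by $\Slam^{1/(q-2)}$ then gives the equation for $U$, using $(q-1)-1=q-2$.

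We expect the non-vanishing step to be the main obstacle. It needs the localized, almost-sharp Sobolev inequality with uniform constants over all charts of $M$, which bounded geometry supplies. It also needs a concentration--compactness argument showing that any concentrating part costs at least $S_{N,k}$ in energy per unit of $L^q$-mass raised to the power $2/q$. The fallback is the Sandeep--Tintarev profile decomposition on manifolds of bounded geometry. Its two kinds of profiles are treated separately: a Euclidean bubble has ratio at least $S_{N,k}$, while a profile translated along $\Hh^n$ is an isometric translate, hence a competitor for $\Slam$ itself. Combining this with the same strict concavity argument shows that a single nonzero translated profile must carry all the mass.
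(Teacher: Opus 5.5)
Your non-vanishing branch is correct, and it is a little cleaner than the paper's route. Once $\|u_j\|_{L^2(B(h_j,1)\times\Ss^m)}\ge\delta$, recentering and Rellich give $u\ne0$ at once, and the Brezis--Lieb/concavity step finishes without ever using $\Slam<S_{N,k}$. That observation also locates the gap: the entire content of the hypothesis sits in your vanishing branch, and there you only assert that ``the Lions concentration--compactness principle on the uniformly locally compact manifold'' yields $\Slam\ge S_{N,k}$. This does not follow as stated.

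Lions' second lemma applies to a weakly convergent sequence whose measures $|u_j|^q\,\dd V$ converge \emph{tightly}, so that the defect sits at fixed points carrying all of the mass. In your scenario the $L^q$-mass may concentrate at points drifting to infinity in $\Hh^n$, and it may be shared among mutually diverging clusters. After any single recentering, the vague limit can therefore lose mass at infinity, and its atoms account for only part of $\|u_j\|_q^q=1$. Moreover, the lower-order terms are negligible only locally. $L^2$-vanishing gives $\sup_i\|u_j\|_{L^2(B_i)}\to0$, not $\|u_j\|_{L^2(M)}\to0$. Summing your $\epsilon$-Sobolev inequality over infinitely many charts therefore leaves $C_\epsilon\|u_j\|_{H^{k-1}(M)}^2$, which need not be small. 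Likewise, $-\lambda\|u_j\|_{L^2(M)}^2$ together with the curvature terms can keep $Q_\lambda(u_j)$ a fixed amount below the principal energy.

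What is missing is a splitting in which the non-concentrating part is controlled by coercivity or by the variational inequality itself. The paper does this in two steps. First, it applies Lions' first lemma to the probability measures $|u_j|^q\,\dd V$. Vanishing is excluded by the uniform local Sobolev estimate. Dichotomy is excluded by quadratic partitions $\chi_j^2+\psi_j^2=1$ whose derivatives tend to zero, so that $Q_\lambda(u_j)=Q_\lambda(\chi_ju_j)+Q_\lambda(\psi_ju_j)+o(1)$, which contradicts $a^{2/q}+(1-a)^{2/q}>1$. This gives tightness after recentering. Second, if the weak limit vanishes, the defect inequality forces $\nu$ to be purely atomic with $\mathfrak e(\{x_i\})\ge S_{N,k}\nu_i^{2/q}$, while the complementary piece $Q_\lambda(\chi_0^{(\delta)}u_j)$ is nonnegative.

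Alternatively, you could recenter at a point of $L^q$-concentration and use $Q_\lambda(\chi_0^{(\delta)}u_j)\ge\Slam\|\chi_0^{(\delta)}u_j\|_q^2$; the strict gap and strict concavity then give the contradiction without proving tightness first. Your fallback does not close the gap either: Sandeep--Tintarev treat bounded $H^{1,2}$ sequences, so the cited decomposition covers only $k=1$.
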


\begin{proof}
Let $u_j\in C_c^\infty(M)$ be a minimizing sequence normalized by
$\|u_j\|_q=1$.  Lemma~\ref{lem:subthreshold-coercivity} and the Sobolev
embedding give constants $c_\lambda,c_\lambda'>0$ such that
\[
 Q_\lambda(v)
 \ge c_\lambda\|v\|_{H^k(M)}^2
 \ge c_\lambda'\|v\|_q^2
 \qquad(v\in H^k(M)).
\]
Thus $0<\Slam<\infty$, and $(u_j)$ is bounded in $H^k(M)$.

We shall use the following localization property of $Q_\lambda$.  Since
$P_k$ has
order $2k$ and principal part $(-\Delta_g)^k$, integration by parts in
bounded-geometry frames gives
\[
 Q_\lambda(v)
 =
 \sum_{r,s=0}^k
 \int_M
 \langle \mathsf a_{rs}\nabla^rv,\nabla^sv\rangle\,\dd V,
\]
where the coefficient fields are uniformly bounded.  Suppose that
$\zeta_{1,j},\ldots,\zeta_{L,j}$ are smooth functions satisfying
$\sum_{a=1}^L\zeta_{a,j}^2=1$ and
\[
 \max_{\substack{1\le a\le L\\1\le r\le k}}
 \|\nabla^r\zeta_{a,j}\|_\infty\longrightarrow0.
\]
For \(0\le r\le k\), the Leibniz rule has the schematic form
\begin{equation}\label{eq:leibniz-localization}
 \nabla^r(\zeta_av)
 =
 \zeta_a\nabla^rv
 +\sum_{t=1}^r C_{r,t}
   (\nabla^t\zeta_a)*\nabla^{r-t}v.
\end{equation}
After this identity is substituted into the local expression for
\(Q_\lambda\), the terms on which no derivative falls on a cutoff sum to
\(Q_\lambda(v)\), because \(\sum_a\zeta_a^2=1\).  Every remaining
integrand contains at least one positive-order derivative of a cutoff and is
a product of two derivatives of \(v\), one of order at most \(k\) and the
other of order at most \(k-1\).  Cauchy--Schwarz therefore gives
\begin{equation}\label{eq:localization-error-bound}
 \left|Q_\lambda(v)-\sum_{a=1}^LQ_\lambda(\zeta_av)\right|
 \le
 C\max_{\substack{1\le a\le L\\1\le t\le k}}
 \|\nabla^t\zeta_a\|_\infty
 \|v\|_{H^k(M)}^2
\end{equation}
when the cutoff derivatives are uniformly small.  Hence, for every bounded
sequence $(v_j)$ in $H^k(M)$,
\begin{equation}
 Q_\lambda(v_j)
 =
 \sum_{a=1}^LQ_\lambda(\zeta_{a,j}v_j)+o(1).
 \label{eq:subthreshold-slow-localization}
\end{equation}
For a fixed quadratic partition whose transition region lies in a compact
set \(K\), every error term in \eqref{eq:leibniz-localization} is supported in
\(K\).  Applying Cauchy--Schwarz with the factor of derivative order at most
\(k-1\) in \(H^{k-1}(K)\) gives
\begin{equation}
 \left|
 Q_\lambda(v)-\sum_{a=1}^LQ_\lambda(\zeta_av)
 \right|
 \le
 C_{\zeta_1,\ldots,\zeta_L}
 \|v\|_{H^k(K)}\|v\|_{H^{k-1}(K)}.
 \label{eq:subthreshold-fixed-localization}
\end{equation}

Let $\mathfrak m_j=|u_j|^q\,\dd V$.  We record the bounded-geometry estimate
used to rule out vanishing.  Choose \(r>0\) smaller than the uniform normal
coordinate radius and a cover \(B_i=B(x_i,r)\) such that the doubled balls
\(B_i^*=B(x_i,2r)\) have overlap at most \(L=L(n,m)\).  Uniform equivalence
of the metric coefficients in these charts and the Euclidean critical
Sobolev inequality give
\[
 \left(\int_{B_i}|v|^q\,\dd V\right)^{2/q}
 \le C\|v\|_{H^k(B_i^*)}^2,
\]
with \(C\) independent of \(i\).  If
\[
 M_r(v):=\sup_{P\in M}\int_{B(P,r)}|v|^q\,\dd V,
\]
then
\begin{align*}
 \int_M|v|^q\,\dd V
 &\le
 \sum_i
 \left(\int_{B_i}|v|^q\,\dd V\right)^{1-2/q}
 \left(\int_{B_i}|v|^q\,\dd V\right)^{2/q}\\
 &\le
 C M_r(v)^{1-2/q}\sum_i\|v\|_{H^k(B_i^*)}^2\\
 &\le
 CL M_r(v)^{1-2/q}\|v\|_{H^k(M)}^2.
\end{align*}
Thus the uniform local Sobolev estimate is
\begin{equation}\label{eq:uniform-local-sobolev}
 \int_M|v|^q\,\dd V
 \le
 C\left(\sup_{P\in M}\int_{B(P,r)}|v|^q\,\dd V\right)^{1-2/q}
 \|v\|_{H^k(M)}^2.
\end{equation}
Since \(\|u_j\|_q=1\) and \((u_j)\) is bounded in \(H^k(M)\),
\eqref{eq:uniform-local-sobolev} excludes vanishing.  The
concentration--compactness lemma of Lions
\cite[Lemma~I.1]{Lions1984ConcentrationCompactness} applies to the probability
measures $\mathfrak m_j$ with geodesic balls in place of Euclidean balls.
Properness ensures that these balls are compact, and bounded geometry
provides smooth separating cutoffs with uniform derivative bounds.  If
dichotomy occurred with masses $a$ and $1-a$, where
$0<a<1$, its separating annuli and bounded geometry would give smooth
functions $\chi_j,\psi_j$ such that
\[
 \chi_j^2+\psi_j^2=1,
 \qquad
 \int_M|\chi_ju_j|^q\,\dd V\longrightarrow a,
 \qquad
 \int_M|\psi_ju_j|^q\,\dd V\longrightarrow1-a,
\]
and the derivatives of positive order of both cutoffs would tend uniformly
to zero.  Such a partition is obtained by applying sine and cosine to a
uniformly smoothed distance function across an annulus whose width tends to
infinity.  Equation~\eqref{eq:subthreshold-slow-localization} and the
variational inequality $Q_\lambda(v)\ge\Slam\|v\|_q^2$ would then imply
\[
 \Slam
 \ge
 \Slam\bigl(a^{2/q}+(1-a)^{2/q}\bigr),
\]
contrary to $\Slam>0$ and $2/q<1$.

The compactness alternative therefore holds.  Hence there are points
$x_j\in M$ such that, for every $\eta>0$, some $R_\eta>0$ satisfies
\[
 \int_{B(x_j,R_\eta)}|u_j|^q\,\dd V\ge1-\eta
\]
for all sufficiently large $j$.  Product isometries move $x_j$ to a fixed
point without changing either $Q_\lambda$ or the $L^q$ norm.  After this
recentering and passage to a subsequence,
\[
 u_j\rightharpoonup u\quad\text{in }H^k(M),
 \qquad
 u_j\longrightarrow u\quad\text{in }H^{k-1}_{\mathrm{loc}}(M)
\]
and almost everywhere.  The measures $|u_j|^q\,\dd V$ are tight.

Suppose that $u=0$.  We use the following specialization of Lions' second
concentration--compactness lemma.  In Euclidean space, put
\[
 \mathscr D_kv
 =
 \begin{cases}
  (-\Delta)^{k/2}v, & k \text{ even},\\
  \nabla(-\Delta)^{(k-1)/2}v, & k \text{ odd}.
 \end{cases}
\]
Thus
\(\|\mathscr D_kv\|_2^2=\int_{\R^N}(-\Delta)^kv\,v\,\dd z\).
With derivative order \(k\), \(p=2\), and \(q=2N/(N-2k)\), Lions' lemma
\cite[Lemma~I.1 and Section~I.4(i)]{Lions1985LimitCase} is as follows.  If
\(v_j\) is bounded in \(\dot H^k(\R^N)\),
\(v_j\rightharpoonup v\) weakly, and
\[
 |\mathscr D_kv_j|^2\,\dd z\rightharpoonup\mu,
 \qquad
 |v_j|^q\,\dd z\rightharpoonup\nu
\]
as bounded nonnegative measures, with the second convergence tight, then
there is an at most countable index set \(J\), together with pairwise
distinct points \(z_i\in\R^N\) and numbers \(\nu_i,\mu_i>0\), such that
\begin{align}
 \nu
 &=
 |v|^q\,\dd z+\sum_{i\in J}\nu_i\delta_{z_i},
 \label{eq:lions-measure-mass}\\
 \mu
 &\ge
 |\mathscr D_kv|^2\,\dd z+\sum_{i\in J}\mu_i\delta_{z_i},
 \qquad
 S_{N,k}\nu_i^{2/q}\le\mu_i.
 \label{eq:lions-measure-energy}
\end{align}
The sharp constant in Lions' statement is \(S_{N,k}\) in our normalization.
The proof begins by applying the sharp Sobolev inequality to
a compactly supported multiplier of \(v_j\); when \(v=0\), this yields
\begin{equation}\label{eq:lions-measure-test-inequality}
 S_{N,k}
 \left(\int_{\R^N}|\varphi|^q\,\dd\nu\right)^{2/q}
 \le
 \int_{\R^N}\varphi^2\,\dd\mu
 \qquad
 \bigl(\varphi\in C_c^\infty(\R^N)\bigr),
\end{equation}
from which the atomic conclusion and
\eqref{eq:lions-measure-energy} follow.

We verify the hypotheses after localization on \(M\).  Define the
Riemannian principal differential expression
\[
 \mathscr D_{g,k}v
 =
 \begin{cases}
  (-\Delta_g)^{k/2}v, & k \text{ even},\\
  \nabla_g(-\Delta_g)^{(k-1)/2}v, & k \text{ odd}.
 \end{cases}
\]
After passage to a subsequence, the bounded nonnegative measures satisfy
\[
 |u_j|^q\,\dd V\rightharpoonup\nu,
 \qquad
 |\mathscr D_{g,k}u_j|^2\,\dd V\rightharpoonup\mathfrak e.
\]
The measure \(\mathfrak e\) is finite because \((u_j)\) is bounded in
\(H^k(M)\).  Fix \(\varphi\in C_c^\infty(M)\), and choose a relatively
compact open set \(\Omega_\varphi\) containing
\(\operatorname{supp}\varphi\).  For every \(\varepsilon>0\), cover
\(\overline{\Omega_\varphi}\) by finitely many normal-coordinate balls whose
radii are small enough that the principal metric coefficients differ from
their Euclidean values by at most the prescribed error.  Choose a smooth
quadratic partition \(\sum_a\zeta_a^2=1\) on a neighborhood of
\(\operatorname{supp}\varphi\).  Since \(q>2\),
\[
 \|w\|_q^2
 =
 \bigl\||w|^2\bigr\|_{q/2}
 \le
 \sum_a\bigl\||\zeta_aw|^2\bigr\|_{q/2}
 =
 \sum_a\|\zeta_aw\|_q^2.
\]
Apply the sharp Euclidean Sobolev inequality to
\((\zeta_a\varphi u_j)\) in each coordinate ball and sum in \(a\).  In the
expansion of \(\mathscr D_{g,k}(\zeta_a\varphi u_j)\), every term other than
\(\zeta_a\varphi\mathscr D_{g,k}u_j\) contains at most \(k-1\) derivatives of
\(u_j\).  The quadratic partition combines the principal terms, while
Cauchy--Schwarz and Young's inequality absorb the cross terms into the
principal energy.  After decreasing the coordinate radii, this gives
\[
 (S_{N,k}-\varepsilon)\|\varphi u_j\|_q^2
 \le
 \int_M\varphi^2|\mathscr D_{g,k}u_j|^2\,\dd V
 +C_{\varepsilon,\varphi}
  \|u_j\|_{H^{k-1}(\Omega_\varphi)}^2.
\]
For every summand, \(|\zeta_a\varphi u_j|^q\,\dd V\) has support in a fixed
compact set, so the nonlinear measure convergence is tight on that chart,
as required in Lions' lemma.
The last term tends to zero because
\(u_j\to0\) in \(H^{k-1}_{\mathrm{loc}}(M)\).  Passing first to the
measure limits and then letting \(\varepsilon\downarrow0\) gives
\begin{equation}
 S_{N,k}
 \left(\int_M|\varphi|^q\,\dd\nu\right)^{2/q}
 \le
 \int_M\varphi^2\,\dd\mathfrak e
 \qquad(\varphi\in C_c^\infty(M)).
 \label{eq:subthreshold-defect-measure}
\end{equation}

Inequality \eqref{eq:subthreshold-defect-measure} forces \(\nu\) to be
purely atomic.  Indeed, suppose that its nonatomic part has mass \(a>0\).
For any \(L\ge1\), divide this part into
pairwise disjoint Borel sets of \(\nu\)-mass \(a/L\).  By inner regularity,
choose pairwise disjoint compact subsets \(K_1,\ldots,K_L\) carrying at least
\(a/(2L)\) mass each.  Since this family is finite, the compact sets have
pairwise disjoint neighborhoods; choose
\(\varphi_r\in C_c^\infty(M)\), \(0\le\varphi_r\le1\), equal to one on
\(K_r\), with pairwise disjoint supports.  Applying
\eqref{eq:subthreshold-defect-measure} to each \(\varphi_r\) and summing gives
\[
 \mathfrak e(M)
 \ge
 S_{N,k}\sum_{r=1}^L\nu(K_r)^{2/q}
 \ge
 S_{N,k}\left(\frac a2\right)^{2/q}L^{1-2/q}.
\]
This contradicts \(\mathfrak e(M)<\infty\), because \(1-2/q>0\).  Hence
\(\nu\) is purely atomic and has at most countably many atoms.  Write them as
\(\{x_i:i\in I\}\), with masses \(\nu_i=\nu(\{x_i\})>0\).

For a fixed atom \(x_i\), choose cutoffs
\(0\le\psi_r\le1\) that equal one on \(B(x_i,r)\) and are supported in
\(B(x_i,2r)\).  Equation \eqref{eq:subthreshold-defect-measure} and continuity
from above for the finite measures \(\nu\) and \(\mathfrak e\) give, as
\(r\downarrow0\),
\[
 \mathfrak e(\{x_i\})
 \ge
 S_{N,k}\nu_i^{2/q}.
\]
Finally, the tight convergence of the probability measures
\(|u_j|^q\,\dd V\) implies \(\nu(M)=1\).  We have therefore proved
\begin{equation}
 \nu=\sum_{i\in I}\nu_i\delta_{x_i},
 \qquad
 \nu_i>0,
 \qquad
 \sum_{i\in I}\nu_i=1,
 \qquad
 \mathfrak e(\{x_i\})\ge S_{N,k}\nu_i^{2/q}.
 \label{eq:subthreshold-atomic-mass}
\end{equation}

Fix a finite set $F\subset I$.  For sufficiently small $\delta>0$, choose
disjoint balls $B(x_i,2\delta)$, $i\in F$, and functions
\(\vartheta_i^{(\delta)}\in C_c^\infty(B(x_i,2\delta))\) equal to one on
\(B(x_i,\delta)\).  Put
\[
 \chi_i^{(\delta)}
 =
 \sin\!\left(\frac\pi2\vartheta_i^{(\delta)}\right),
 \qquad
 \chi_0^{(\delta)}
 =
 \prod_{i\in F}
 \cos\!\left(\frac\pi2\vartheta_i^{(\delta)}\right).
\]
The supports of the \(\vartheta_i^{(\delta)}\) are disjoint, so at each
point at most one factor differs from one.  Hence
\(\chi_0^{(\delta)}\) represents the whole complement of the selected
neighborhoods, including the atoms indexed by \(I\setminus F\), and
\[
 (\chi_0^{(\delta)})^2
 +\sum_{i\in F}(\chi_i^{(\delta)})^2=1,
\]
where $\chi_i^{(\delta)}=1$ on $B(x_i,\delta)$.  For fixed $\delta$,
Equation~\eqref{eq:subthreshold-fixed-localization} and local $H^{k-1}$
convergence give
\[
 Q_\lambda(u_j)
 =
 Q_\lambda(\chi_0^{(\delta)}u_j)
 +\sum_{i\in F}Q_\lambda(\chi_i^{(\delta)}u_j)+o(1).
\]
There is no sum over \(i\notin F\): all of those regions are contained in the
single term \(Q_\lambda(\chi_0^{(\delta)}u_j)\).  This term is nonnegative by
subthreshold coercivity.  For \(i\in F\), local \(H^{k-1}\)-convergence makes
all lower-order terms and cutoff commutators vanish, whereas the
principal-part energy measure satisfies
\[
 \liminf_{j\to\infty}
 Q_\lambda(\chi_i^{(\delta)}u_j)
 \ge
 \int_M(\chi_i^{(\delta)})^2\,\dd\mathfrak e
 \ge
 \mathfrak e(\{x_i\})
 \ge
 S_{N,k}\nu_i^{2/q}.
\]
Summing over the finite set \(F\), then letting \(j\to\infty\) and
\(\delta\downarrow0\), gives
\[
 \Slam\ge S_{N,k}\sum_{i\in F}\nu_i^{2/q}.
\]
Finally let $F$ increase to $I$.  By
\eqref{eq:subthreshold-atomic-mass},
\[
 \Slam
 \ge S_{N,k}\sum_{i\in I}\nu_i^{2/q}
 \ge S_{N,k}\left(\sum_{i\in I}\nu_i\right)^{2/q}
 =S_{N,k},
\]
contrary to $\Slam<S_{N,k}$.  Therefore $u\ne0$.

Put $v_j=u_j-u$.  Local compactness gives almost-everywhere convergence after
a subsequence, so the Hilbert identity and the Brézis--Lieb lemma
\cite[Theorem~1]{BrezisLieb1983} give
\begin{align*}
 Q_\lambda(u_j)
 &=Q_\lambda(u)+Q_\lambda(v_j)+o(1),\\
 1
 &=\|u\|_q^q+\|v_j\|_q^q+o(1).
\end{align*}
Set $a=\|u\|_q^q>0$ and, after a further subsequence,
$b=\lim_j\|v_j\|_q^q$.  Then $a+b=1$ and
\[
 \Slam
 \ge
 \Slam\bigl(a^{2/q}+b^{2/q}\bigr).
\]
Because $2/q<1$, this forces $b=0$ and $a=1$.  The energy splitting and the
variational lower bound then give $Q_\lambda(u)=\Slam$.  Differentiating the
quotient proves \eqref{eq:subthreshold-EL}, and the stated scaling gives the
equation with unit nonlinear coefficient.
\end{proof}

\subsection{Strict inequalities}

\begin{lemma}[High-dimensional localized bubble]
\label{lem:strict-high-dim}
Assume $N\ge4k$ and $0<\lambda\le\Lambda_0$.  Then
\[
 \inf_{u\in C_c^\infty(M)\setminus\{0\}}
 \frac{Q_\lambda(u)}{\|u\|_q^2}
 <S_{N,k}.
\]
\end{lemma}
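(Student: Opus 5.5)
Via the conformal transform $\mathscr C$ and the identities \eqref{eq:conformal-energy}--\eqref{eq:conformal-lq}, it suffices to find $w\in C_c^\infty(\R^N\setminus\R^{n-1})$ with
\[
 \frac{\|w\|_{\dot H^k}^2-\lambda\int_{\R^N}w^2|y|^{-2k}\,\dd z}{\|w\|_q^2}<S_{N,k}.
\]
Indeed, $\mathscr Cw\in C_c^\infty(M)$ for such $w$, and it realizes exactly this quotient. We test with a localized Euclidean extremal concentrating at a point away from the axis $\mathcal A$. Fix $z_0=(0,y_0)$ with $|y_0|=1$, and take $\delta\in(0,1/4)$ so that $B(z_0,2\delta)\cap\mathcal A=\emptyset$. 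Let $\eta\in C_c^\infty(B(z_0,2\delta))$ satisfy $\eta=1$ on $B(z_0,\delta)$. Let
\[
 U(z)=(1+|z|^2)^{-(N-2k)/2}
\]
be the Aubin--Talenti--Lieb extremal for $S_{N,k}$, and put $U_\varepsilon(z)=\varepsilon^{-(N-2k)/2}U((z-z_0)/\varepsilon)$ and $w_\varepsilon=\eta U_\varepsilon$.

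The argument consists of three standard Brezis--Nirenberg-type expansions, which carry over verbatim to the polyharmonic setting as in Edmunds--Fortunato--Jannelli and Gazzola.

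\emph{Energy.} $\|w_\varepsilon\|_{\dot H^k}^2=\|U\|_{\dot H^k}^2+O(\varepsilon^{N-2k})$. The cutoff errors involve derivatives of $U_\varepsilon$ on the annulus $\delta\le|z-z_0|\le2\delta$. There $|\nabla^jU_\varepsilon|\lesssim\varepsilon^{(N-2k)/2}|z-z_0|^{-(N-2k)-j}$, and each cross term is $O(\varepsilon^{N-2k})$.

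\emph{Mass.} $\|w_\varepsilon\|_q^q=\|U\|_q^q+O(\varepsilon^{N})$.

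\emph{Hardy term.} On $B(z_0,2\delta)$ we have $|y|\le 1+2\delta$, so $|y|^{-2k}\ge c_\delta>0$. Hence
\[
 \lambda\int w_\varepsilon^2|y|^{-2k}\,\dd z\ge\lambda c_\delta\int_{B(z_0,\delta)}U_\varepsilon^2\,\dd z.
\]
After rescaling this equals $\lambda c_\delta\varepsilon^{2k}\int_{B(0,\delta/\varepsilon)}U^2$. For $N>4k$ this is $\ge c\varepsilon^{2k}$ since $U\in L^2$. For $N=4k$ it is $\ge c\varepsilon^{2k}|\log\varepsilon|$, because $U^2\sim|z|^{-4k}=|z|^{-N}$.

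Combining the three expansions, the quotient is at most
\[
 S_{N,k}-c\lambda\,\varepsilon^{2k}\,\ell(\varepsilon)+O(\varepsilon^{N-2k}),
\]
where $\ell\equiv1$ if $N>4k$ and $\ell(\varepsilon)=|\log\varepsilon|$ if $N=4k$. Since $N\ge4k$ means $N-2k\ge2k$, the negative term dominates as $\varepsilon\to0$: for $N>4k$ strictly in the exponent, and for $N=4k$ through the logarithm. Here we use $\lambda>0$. Fixing $\varepsilon$ small gives the strict inequality. Note that $\lambda\le\Lambda_0$ plays no role beyond positivity, and the threshold case is covered without change.

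The main technical point is the energy estimate: we must verify that every cutoff commutator term in $\|\eta U_\varepsilon\|_{\dot H^k}^2$ is $O(\varepsilon^{N-2k})$ and not of a larger order. We plan to handle this by writing
\[
 \|\eta U_\varepsilon\|_{\dot H^k}^2=\int\eta^2|\mathscr D_kU_\varepsilon|^2+\text{commutator terms}.
\]
The commutators are supported in the annulus, where the pointwise decay bounds above apply. The tail satisfies $\int_{|z-z_0|>\delta}|\mathscr D_kU_\varepsilon|^2=O(\varepsilon^{N-2k})$. The remaining expansions are routine.
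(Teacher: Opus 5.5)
Your proposal is correct and follows essentially the same route as the paper: a cut-off Aubin--Talenti bubble centered at a point at unit distance from $\mathcal A$, with energy and $L^q$ errors of order $\varepsilon^{N-2k}$ and $\varepsilon^{N}$, dominated by a weighted term of order $\varepsilon^{2k}$ (times $|\log\varepsilon|$ when $N=4k$). The only difference is cosmetic: you bound the weighted integral from below using $|y|^{-2k}\ge c_\delta$ on the support, whereas the paper computes its exact leading asymptotics by dominated convergence; your lower bound is all the argument needs.
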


\begin{proof}
Choose $p_0=(0,y_0)$ with $|y_0|=1$ in the conformal Euclidean model.  Let
$\chi\in C_c^\infty(B_{3/4}(p_0))$ satisfy $\chi=1$ on $B_{1/2}(p_0)$, and
set
\[
 w_\varepsilon(z)
 =
 \chi(z)
 \left(
  \frac{2\sqrt\varepsilon}
       {\varepsilon+|z-p_0|^2}
 \right)^\gamma.
\]
Put \(\delta=\sqrt\varepsilon\) and denote the uncut function by
\[
 U_\delta(z)
 =
 \left(\frac{2\delta}{\delta^2+|z-p_0|^2}\right)^\gamma,
 \qquad w_\varepsilon=\chi U_\delta.
\]
Thus \(\delta\) is its concentration scale.  The estimates below are the
corresponding localized-bubble expansions in the present normalization;
compare the second-order computation of Brézis--Nirenberg
\cite[Lemma~1.1 and Eqs.~(1.10)--(1.13)]{BrezisNirenberg1983} and the
polyharmonic computation in
\cite[Lemmas~5.1--5.2]{LuYang2022GreenGJMS}.

Let
\[
 \mathcal R=\left\{z:\frac12\le |z-p_0|\le\frac34\right\}.
\]
For every multi-index \(\alpha\) with \(|\alpha|\le2k\), direct
differentiation gives
\[
 |\partial^\alpha U_\delta(z)|
 \le C_\alpha\delta^\gamma
 \qquad (z\in\mathcal R).
\]
Moreover, for \(|z-p_0|\ge1/2\),
\[
 |\partial^\alpha U_\delta(z)|
 \le
 C_\alpha\delta^\gamma
 |z-p_0|^{-2\gamma-|\alpha|}.
\]
After integrating by parts in the \(\dot H^k\)-energy, expand
\[
 (-\Delta)^k(\chi U_\delta)
 =
 \chi(-\Delta)^kU_\delta
 +[(-\Delta)^k,\chi]U_\delta.
\]
Every term in the commutator is supported in \(\mathcal R\) and contains a
positive-order derivative of \(\chi\).  The two derivative bounds above
therefore imply
\begin{equation}\label{eq:localized-bubble-energy-error}
 \left|
 \|w_\varepsilon\|_{\dot H^k}^2
 -\|U_\delta\|_{\dot H^k}^2
 \right|
 \le C\delta^{2\gamma}
 =C\varepsilon^\gamma.
\end{equation}
Since \(\gamma q=N\),
\[
 \int_{\{|z-p_0|\ge1/2\}}U_\delta^q\,\dd z
 =O(\delta^N),
\]
and hence
\[
 \left|
 \|w_\varepsilon\|_q^2-\|U_\delta\|_q^2
 \right|
 =O(\delta^N).
\]
The uncut function is a Euclidean extremal, so
\(\|U_\delta\|_{\dot H^k}^2=S_{N,k}\|U_\delta\|_q^2\).  Consequently,
\begin{equation}\label{eq:localized-bubble-sobolev-expansion}
 \|w_\varepsilon\|_{\dot H^k}^2
 =
 S_{N,k}\|w_\varepsilon\|_q^2
 +O(\varepsilon^\gamma).
\end{equation}

For the weighted term, the change of variables \(z=p_0+\delta Z\) gives
\begin{equation}\label{eq:localized-bubble-weight-scaling}
\begin{aligned}
 I_\delta
 &:=
 \int_{\R^N}\frac{w_\varepsilon^2}{|y|^{2k}}\,\dd z\\
 &=
 2^{2\gamma}\delta^{2k}
 \int_{\R^N}
 \frac{\chi(p_0+\delta Z)^2}
 {|y_0+\delta Z_y|^{2k}}
 (1+|Z|^2)^{-2\gamma}\,\dd Z.
\end{aligned}
\end{equation}
If \(N>4k\), then
\((1+|Z|^2)^{-2\gamma}\in L^1(\R^N)\).  Dominated convergence in
\eqref{eq:localized-bubble-weight-scaling} yields
\[
 I_\delta
 =
 c_0\delta^{2k}+o(\delta^{2k})
 =
 c_0\varepsilon^k+o(\varepsilon^k),
 \qquad
 c_0
 =
 2^{2\gamma}\int_{\R^N}(1+|Z|^2)^{-2\gamma}\,\dd Z>0.
\]
If \(N=4k\), the radial integrand has order \(r^{-1}\) at infinity, and
integration over \(1\le r\le c/\delta\) gives
\[
 I_\delta
 =
 c_1\delta^{2k}\log\frac1\delta+O(\delta^{2k})
 =
 c_2\varepsilon^k|\log\varepsilon|+O(\varepsilon^k),
 \qquad c_1,c_2>0.
\]
When \(N>4k\), one has \(\gamma=(N-2k)/2>k\); hence the negative term
\(-\lambda c_0\varepsilon^k\) strictly dominates the error
\(O(\varepsilon^\gamma)\) in
\eqref{eq:localized-bubble-sobolev-expansion}.  When \(N=4k\), the logarithm
provides the strict domination.  Equations
\eqref{eq:conformal-energy}--\eqref{eq:conformal-lq} therefore give a quotient
strictly below \(S_{N,k}\) for all sufficiently small \(\varepsilon\).
\end{proof}

For the lower-dimensional range, recall the explicit positive quantities
\(\mathcal A_{N,k}^{\mathrm{loc}}\), \(\mathcal F_{n,m,k}\), and
\(\Lambda_{\mathrm{loc}}\) from
\eqref{eq:A-local}--\eqref{eq:lambda-local}.

\begin{proposition}[Lower-dimensional localized bubble]
\label{prop:strict-low-dim}
Assume $2k+2\le N<4k$ and $0<\lambda\le\Lambda_0$.  If
\[
 \lambda>\Lambda_{\mathrm{loc}},
\]
then
\[
 \inf_{u\in C_c^\infty(M)\setminus\{0\}}
 \frac{Q_\lambda(u)}{\|u\|_q^2}
 <S_{N,k}.
\]
\end{proposition}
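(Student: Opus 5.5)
In the range \(2k+2\le N<4k\) the localized bubble of Lemma~\ref{lem:strict-high-dim} fails. The weight \((1+|Z|^2)^{-2\gamma}\) is no longer integrable, and both the cutoff error and the Hardy term are of the same order \(\delta^{2\gamma}=\varepsilon^\gamma\). So I would not cut off at a fixed ball. Instead I would use a test function adapted to the geometry, whose leading coefficients are computed exactly and produce \(\mathcal A_{N,k}^{\mathrm{loc}}\) and \(\mathcal F_{n,m,k}\). The hypergeometric factors in \eqref{eq:F-local} suggest the construction. Take the bubble \(U_\delta\) centered at \(p_0=(0,y_0)\), \(|y_0|=1\), and replace the cutoff \(\chi U_\delta\) by \(U_\delta-\delta^\gamma h\). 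Here \(h\) is chosen so that the difference is supported in the unit ball and has vanishing Dirichlet data to order \(k-1\) on its boundary. This is the polyharmonic Green-function correction of Pucci--Serrin/Gazzola type; in the form of Lu--Yang \cite{LuYang2022GreenGJMS} it is the regular part of the Green function of \((-\Delta)^k\) on the ball, and \({}_2F_1(N/2,1;k+1;1-|w|^2)\) is exactly its profile. After the conformal transform \(\mathscr C\), this yields an element of \(C_c^\infty(M)\) up to density.

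The first step is to expand the Euclidean energy. With \(w_\delta=U_\delta-\delta^\gamma H+o(\delta^\gamma)\), where \(H\) is the regular part, the standard computation gives
\[
\|w_\delta\|_{\dot H^k}^2=\|U_\delta\|_{\dot H^k}^2-\delta^{2\gamma}\mathcal A_{N,k}^{\mathrm{loc}}+o(\delta^{2\gamma})\cdot(\text{cross terms})
\]
and \(\|w_\delta\|_q^2=\|U_\delta\|_q^2-c\,\delta^{2\gamma}\). Combining these, the quotient expands as \(S_{N,k}+C_1\delta^{2\gamma}+o(\delta^{2\gamma})\), with \(C_1\) a positive multiple of \(\mathcal A_{N,k}^{\mathrm{loc}}\) after normalization. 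The constant \(2^{2k}|\Ss^{N-1}|\Gamma(k+1)^2/(N-2k)\) should appear as the residue of \((-\Delta)^kU\) against the boundary correction.

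The second step is the weighted term. Since \(N<4k\), the rescaling of \(I_\delta\) shows that the main contribution comes from \(|z-p_0|\sim1\), not from the concentration region. Hence \(I_\delta=\delta^{2\gamma}\int_{B}\Phi(z)^2|y|^{-2k}\,dz+o(\delta^{2\gamma})\), where \(\Phi=\lim\delta^{-\gamma}w_\delta\) is the Green function of \((-\Delta)^k\) on the ball with pole \(p_0\). I would write \(\Phi\) in closed form as \(|z-p_0|^{-2\gamma}(1-s^2-t^2)^{k}{}_2F_1(N/2,1;k+1;\cdot)\) in suitable coordinates. Here \(s\) measures the \(\R^{n-1}\) component and \(t\) measures the distance in the \(y\)-variables. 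I would then integrate out the angular variables of \(\Ss^{n-2}\) and \(\Ss^m\). The \(\Ss^m\)-average of \(|y|^{-2k}\) weighted by the kernel produces \({}_2F_1(k,m/2;m;4t/(1+t)^2)\) via the Gegenbauer generating function, and the result should be \(\mathcal F_{n,m,k}\) in normalized units. The quotient then equals
\[
S_{N,k}+c\,\delta^{2\gamma}\bigl(\mathcal A_{N,k}^{\mathrm{loc}}-\lambda\mathcal F_{n,m,k}\bigr)+o(\delta^{2\gamma}),
\]
which is negative for small \(\delta\) exactly when \(\lambda>\Lambda_{\mathrm{loc}}\). Transferring back by \eqref{eq:conformal-energy}--\eqref{eq:conformal-lq} completes the argument. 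The requirement \(N\ge2k+2\) should ensure that \(\Phi^2|y|^{-2k}\) is integrable near \(p_0\), since it behaves like \(r^{-2(N-2k)}\) with \(2(N-2k)<N\) as \(N<4k\). It should also make the \(o(\delta^{2\gamma})\) remainders legitimate.

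The main obstacle is the exact bookkeeping of constants. The normalization of the Green correction must make the energy deficit exactly \(\mathcal A_{N,k}^{\mathrm{loc}}\delta^{2\gamma}\), with the same factor as in the weighted term, and the \(L^q\) correction must be absorbed consistently. I would first establish the statement with an unidentified ratio \(\mathcal A/\mathcal F\) of positive constants given by these limiting integrals. Only afterwards would I evaluate them, checking the \(2^{2\gamma}\) and sphere-volume factors against the Brezis--Nirenberg case \(k=1\).
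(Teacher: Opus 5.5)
Your construction is the paper's. \(U_\delta\) minus the polyharmonic function with the same Dirichlet data on \(\partial B_1(p_0)\) is exactly the Lu--Yang family \(\phi_\varepsilon-H_\varepsilon\), with \(\varepsilon=\delta^2\) and \(H_\varepsilon\) a polynomial of degree \(k-1\) in \(\varrho^2\). The energy and \(L^q\) expansions run through \(J_\varepsilon=\int_{B_1(p_0)}\phi_\varepsilon^{q-1}H_\varepsilon\,\dd z\), and the weighted term comes from the pointwise limit plus a spherical average.

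The step you leave unaddressed is the contact point with the axis. Since \(|y_0|=1\), the closed ball \(\overline{B_1(p_0)}\) touches \(\mathcal A\) at the origin. So your test functions are not supported away from the singular axis, and \(|y|^{-2k}\) is unbounded on their support. For small \(m\) (for instance \(n=6\), \(m=2\), \(k=3\)) the weight is not even integrable over \(B_1(p_0)\) near the origin. Your integrability check concerns only \(p_0\), where \(|y|\approx1\) and the weight is harmless. As written, neither the claimed asymptotics of \(I_\delta\) nor the passage ``up to density'' to \(C_c^\infty(M)\) is justified.

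The paper uses \(d(z)\le1-\varrho^2=2y\cdot y_0-|x|^2-|y|^2\le2|y|\), where \(d\) is the distance to \(\partial B_1(p_0)\). This gives \((1-\varrho^2)^{2k}|y|^{-2k}\le2^{2k}\), hence the majorant \(C\varrho^{-2(N-2k)}\), integrable because \(N<4k\). Combined with the boundary Hardy inequality on \(H_0^k(B_1(p_0))\), it also makes the weighted term continuous under smooth approximation. Alternatively, only a lower bound on the weighted term is needed. Fatou's lemma gives \(\liminf_{\delta\to0}\delta^{-2\gamma}\int w_\delta^2|y|^{-2k}\,\dd z\ge\int\Phi^2|y|^{-2k}\,\dd z\). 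The cylindrical Hardy inequality of the appendix (available because \(\Lambda_0>0\)) then makes the weighted term continuous along \(H^k\)-approximations from \(C_c^\infty(B_1(p_0))\).

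Three smaller corrections.

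First, your closed form for \(\Phi\) double-counts the singularity. The factor \({}_2F_1(N/2,1;k+1;1-\varrho^2)\) already behaves like \(\varrho^{-2\gamma}\) at \(p_0\), since its parameters have \(c-a-b=-\gamma\). The correct limit is \(\delta^{-\gamma}w_\delta\to C_{N,k}(1-\varrho^2)^k\,{}_2F_1(N/2,1;k+1;1-\varrho^2)\). This is the full Dirichlet Green function; the regular part corresponds to the polynomial \(H_\varepsilon\). Equivalently, by Euler's transformation, the limit is \(C_{N,k}\varrho^{-2\gamma}(1-\varrho^2)^k\,{}_2F_1(k+1-N/2,k;k+1;1-\varrho^2)\). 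With your extra factor \(|z-p_0|^{-2\gamma}\), \(\Phi^2\) would behave like \(\varrho^{-4(N-2k)}\). That contradicts the \(r^{-2(N-2k)}\) behaviour you use and would not reproduce \(\mathcal F_{n,m,k}\).

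Second, \(\Lambda_{\mathrm{loc}}\) is a ratio of exact coefficients, so the sign and value of \(C_1\) must be derived, not asserted up to a positive multiple. Take \(f_\varepsilon=C_{N,k}^{-1}(\phi_\varepsilon-H_\varepsilon)\). The identities \((-\Delta)^kH_\varepsilon=0\) and \((-\Delta)^k\phi_\varepsilon=L_{N,k}\phi_\varepsilon^{q-1}\) give, up to \(o(\varepsilon^\gamma)\), energy \(C_{N,k}^{-2}L_{N,k}(I_0-J_\varepsilon)\) and \(L^q\)-mass \(C_{N,k}^{-q}(I_0-qJ_\varepsilon)\). Thus \(S_{N,k}\|f_\varepsilon\|_q^2\) drops by twice the energy deficit. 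The net excess is \(+C_{N,k}^{-2}L_{N,k}J_\varepsilon=\mathcal A_{N,k}^{\mathrm{loc}}\varepsilon^\gamma+o(\varepsilon^\gamma)\), for the same function whose weighted integral is \(\mathcal F_{n,m,k}\varepsilon^\gamma+o(\varepsilon^\gamma)\).

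Third, integrability at \(p_0\) comes from \(N<4k\), not from \(N\ge2k+2\).
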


\begin{proof}
Fix $p_0=(0,y_0)$ with $|y_0|=1$, and put
$\varrho=|z-p_0|$.  In $B_1(p_0)$ set
\[
 \phi_\varepsilon(z)
 =
 \left(\frac{2\sqrt\varepsilon}{\varepsilon+\varrho^2}\right)^\gamma,
 \qquad
 X_\varepsilon(\varrho)
 =
 \frac{1-\varrho^2}{1+\varepsilon}.
\]
For $a\in\R$, write $(a)_0=1$ and
$(a)_j=a(a+1)\cdots(a+j-1)$ for $j\ge1$.  Following the
boundary-corrected ball family of Lu and Yang
\cite[Eqs.~(5.12) and (5.13)]
{LuYang2022GreenGJMS}, define
\[
 H_\varepsilon
 =
 \left(\frac{2\sqrt\varepsilon}{1+\varepsilon}\right)^\gamma
 \sum_{j=0}^{k-1}\frac{(\gamma)_j}{j!}X_\varepsilon^j,
 \qquad
 \widetilde f_\varepsilon=\phi_\varepsilon-H_\varepsilon.
\]
The function $\widetilde f_\varepsilon$ vanishes with its first $k-1$ normal
derivatives on $\partial B_1(p_0)$.  Moreover, $H_\varepsilon$ is a polynomial
of degree at most $k-1$ in $\varrho^2$, so
$(-\Delta)^kH_\varepsilon=0$.  Set
\[
 C_{N,k}
 =
 2^\gamma
 \frac{\Gamma(N/2)}{\Gamma(k+1)\Gamma(\gamma)},
 \qquad
 f_\varepsilon=C_{N,k}^{-1}\widetilde f_\varepsilon,
\]
and extend $f_\varepsilon$ by zero outside $B_1(p_0)$.  The extension belongs
to $H_0^k(B_1(p_0))$.

We compute the coefficient in the present normalization.  Put
\[
 L_{N,k}=\frac{\Gamma(N/2+k)}{\Gamma(N/2-k)},
 \qquad
 I_0=|\Ss^N|.
\]
Direct differentiation and radial integration give
\[
 (-\Delta)^k\phi_\varepsilon
 =L_{N,k}\phi_\varepsilon^{q-1},
 \qquad
 \int_{\R^N}\phi_\varepsilon^q\,\dd z=I_0,
 \qquad
 S_{N,k}=L_{N,k}I_0^{1-2/q}.
\]
Let
\[
 J_\varepsilon
 =\int_{B_1(p_0)}
   \phi_\varepsilon^{q-1}H_\varepsilon\,\dd z.
\]
Since $(-\Delta)^kH_\varepsilon=0$ and
$\widetilde f_\varepsilon\in H_0^k(B_1(p_0))$, integration by parts gives
\begin{equation}
 \int_{\R^N}(-\Delta)^kf_\varepsilon f_\varepsilon\,\dd z
 =C_{N,k}^{-2}L_{N,k}(I_0-J_\varepsilon)
  +o(\varepsilon^\gamma).
 \label{eq:local-energy-part}
\end{equation}
Here the omitted bubble tail is $O(\varepsilon^{N/2})$.
The Taylor series for $(1-X)^{-\gamma}$ shows that
$0\le H_\varepsilon\le\phi_\varepsilon$ in the ball.  Direct scaling gives
\[
 \int_{B_1(p_0)}
 \phi_\varepsilon^{q-2}H_\varepsilon^2\,\dd z
 =O(\varepsilon^{2\gamma}),
 \qquad
 \int_{B_1(p_0)}H_\varepsilon^q\,\dd z
 =O(\varepsilon^{N/2}).
\]
Consequently,
\[
 \int_{\R^N}|f_\varepsilon|^q\,\dd z
 =C_{N,k}^{-q}
  (I_0-qJ_\varepsilon+o(\varepsilon^\gamma)),
\]
and hence
\begin{equation}
 \int_{\R^N}(-\Delta)^kf_\varepsilon f_\varepsilon\,\dd z
 -S_{N,k}\|f_\varepsilon\|_q^2
 =C_{N,k}^{-2}L_{N,k}J_\varepsilon
  +o(\varepsilon^\gamma).
 \label{eq:local-energy-reduction}
\end{equation}

Set $R_{k-1}(X)=\sum_{j=0}^{k-1}(\gamma)_jX^j/j!$.  With
$\delta=\sqrt\varepsilon$, the change of variables
$\varrho=\delta t$ gives
\begin{align*}
 \lim_{\varepsilon\downarrow0}
 \varepsilon^{-\gamma}J_\varepsilon
 &=2^N|\Ss^{N-1}|R_{k-1}(1)
   \int_0^\infty
   \frac{t^{N-1}}{(1+t^2)^{N/2+k}}\,\dd t,\\
 R_{k-1}(1)
 &=\frac{\Gamma(N/2)}{\Gamma(\gamma+1)\Gamma(k)},\\
 \int_0^\infty
 \frac{t^{N-1}}{(1+t^2)^{N/2+k}}\,\dd t
 &=\frac{\Gamma(N/2)\Gamma(k)}{2\Gamma(N/2+k)}.
\end{align*}
Substitution of $C_{N,k}$ and $L_{N,k}$ into
\eqref{eq:local-energy-reduction} therefore yields
\begin{equation}
 \int_{\R^N}(-\Delta)^kf_\varepsilon f_\varepsilon\,\dd z
 -S_{N,k}\|f_\varepsilon\|_q^2
 =
 \mathcal A_{N,k}^{\mathrm{loc}}\varepsilon^\gamma
 +o(\varepsilon^\gamma).
 \label{eq:local-energy-expansion}
\end{equation}

It remains to evaluate the weighted lower-order term specific to the product
geometry.  The hypergeometric
remainder identity \cite[Eq.~(5.20)]{LuYang2022GreenGJMS} gives, for
$0<\varrho<1$,
\begin{equation}
 \varepsilon^{-\gamma/2}f_\varepsilon
 =
 (1+\varepsilon)^{-\gamma}X_\varepsilon^k
 {}_2F_1\!\left(\frac N2,1;k+1;X_\varepsilon\right).
 \label{eq:corrected-bubble-identity}
\end{equation}
Consequently, away from $p_0$,
\[
 \varepsilon^{-\gamma/2}f_\varepsilon
 \longrightarrow
 (1-\varrho^2)^k
 {}_2F_1\!\left(\frac N2,1;k+1;1-\varrho^2\right).
\]
Euler's integral representation gives the uniform majorant needed below.
For \(0\le X<1\),
\[
 {}_2F_1\!\left(\frac N2,1;k+1;X\right)
 =
 k\int_0^1
 (1-t)^{k-1}(1-Xt)^{-N/2}\,\dd t.
\]
After the substitution \(s=1-t\), the right-hand side is
\[
 k\int_0^1
 s^{k-1}\bigl((1-X)+Xs\bigr)^{-N/2}\,\dd s.
\]
It is uniformly bounded for \(0\le X\le1/2\).  If \(1/2<X<1\), then,
with \(u=s/(1-X)\),
\begin{align*}
 {}_2F_1\!\left(\frac N2,1;k+1;X\right)
 &\le
 C\int_0^1
 s^{k-1}\bigl((1-X)+s\bigr)^{-N/2}\,\dd s\\
 &\le
 C(1-X)^{k-N/2}
 \int_0^\infty u^{k-1}(1+u)^{-N/2}\,\dd u\\
 &\le C(1-X)^{-\gamma}.
\end{align*}
The last integral is finite because \(k<N/2\).  Since
\[
 1-X_\varepsilon(\varrho)
 =
 \frac{\varepsilon+\varrho^2}{1+\varepsilon},
 \qquad
 X_\varepsilon(\varrho)^k
 =
 \frac{(1-\varrho^2)^k}{(1+\varepsilon)^k},
\]
\eqref{eq:corrected-bubble-identity} now gives
\begin{align*}
 \varepsilon^{-\gamma/2}|f_\varepsilon(z)|
 &\le
 C(1-\varrho^2)^k(\varepsilon+\varrho^2)^{-\gamma}\\
 &\le
 C\varrho^{-2\gamma}(1-\varrho^2)^k.
\end{align*}
Thus
\begin{equation}
 \varepsilon^{-\gamma/2}|f_\varepsilon(z)|
 \le
 C\varrho^{-(N-2k)}(1-\varrho^2)^k,
 \qquad 0<\varrho<1.
 \label{eq:corrected-bubble-majorant}
\end{equation}
Indeed, the square has radial order
\(\varrho^{N-1-2(N-2k)}\) near \(p_0\), which is integrable precisely when
\(N<4k\).  At the point where $\partial B_1(p_0)$ meets the singular axis,
\[
 1-\varrho^2
 =2y\cdot y_0-|x|^2-|y|^2
 \le2|y|.
\]
Thus
\[
 |y|^{-2k}(1-\varrho^2)^{2k}\le2^{2k},
\]
so the square of the majorant remains integrable after multiplication by
$|y|^{-2k}$; away from this boundary point the weight is bounded.

Write $z-p_0=(\xi,\zeta)$, pass to $s=|\xi|$ and $t=|\zeta|$, and apply the
Funk--Hecke formula to $|y_0+\zeta|^{-2k}$.  The resulting angular integral
is the expression in \eqref{eq:F-local}.  Dominated convergence, justified by
\eqref{eq:corrected-bubble-majorant}, gives
\begin{equation}
 \int_{\R^N}\frac{f_\varepsilon^2}{|y|^{2k}}\,\dd z
 =
 \mathcal F_{n,m,k}\varepsilon^\gamma
 +o(\varepsilon^\gamma).
 \label{eq:local-weight-expansion}
\end{equation}

To obtain admissible smooth test functions, put
$d(z)=1-\varrho=\operatorname{dist}(z,\partial B_1(p_0))$.  Since
$d\le1-\varrho^2\le2|y|$, the integer-order Hardy inequality in boundary
normal coordinates gives
\begin{equation}
 \int_{B_1(p_0)}\frac{|h|^2}{|y|^{2k}}\,\dd z
 \le
 C\int_{B_1(p_0)}\frac{|h|^2}{d^{2k}}\,\dd z
 \le C\|h\|_{H^k(B_1(p_0))}^2
 \qquad(h\in H_0^k(B_1(p_0))).
 \label{eq:tangent-ball-hardy}
\end{equation}
For each $\varepsilon$, density of $C_c^\infty(B_1(p_0))$ in
$H_0^k(B_1(p_0))$ provides $f_\varepsilon^\sharp$ such that
\[
 \|f_\varepsilon^\sharp-f_\varepsilon\|_{H^k(B_1(p_0))}
 \le\varepsilon^{\gamma+1}.
\]
The collar width and mollification scale may be chosen diagonally to tend to
zero.  The bubble family is uniformly bounded in $H^k$, so the Sobolev
inequality and \eqref{eq:tangent-ball-hardy} show that replacing
$f_\varepsilon$ by $f_\varepsilon^\sharp$ changes the $\dot H^k$ energy, the
squared $L^q$ norm, and the weighted $L^2$ integral by
$o(\varepsilon^\gamma)$.  Each $f_\varepsilon^\sharp$ is supported a positive
distance from the singular axis.  We henceforth write $f_\varepsilon$ for
this smooth family.

Combining \eqref{eq:local-energy-expansion},
\eqref{eq:local-weight-expansion}, and the conformal identities gives
\[
 Q_\lambda(\mathscr C f_\varepsilon)
 -S_{N,k}\|\mathscr C f_\varepsilon\|_q^2
 \le
 \bigl(\mathcal A_{N,k}^{\mathrm{loc}}
 -\lambda\mathcal F_{n,m,k}\bigr)\varepsilon^\gamma
 +o(\varepsilon^\gamma).
\]
The coefficient is negative precisely when
$\lambda>\Lambda_{\mathrm{loc}}$.
\end{proof}

A sufficient condition for the interval
$(\Lambda_{\mathrm{loc}},\Lambda_0)$ to be nonempty can be obtained without
numerical integration.  Since both hypergeometric factors in
\eqref{eq:F-local} are at
least one and $(1+t)^{-2k}\ge2^{-2k}$,
Beta integration and the Gamma duplication formula give
\begin{equation}
 \mathcal F_{n,m,k}
 \ge
 2^{-2k}\pi^{N/2}
 \frac{\Gamma(2k+1)}{\Gamma(N/2+2k+1)}.
 \label{eq:F-local-lower}
\end{equation}
Consequently,
\begin{equation}
 \Lambda_0>\Theta_{N,k}
 :=
 \frac{2^{4k+1}\Gamma(k+1)^2\Gamma(N/2+2k+1)}
 {(N-2k)\Gamma(N/2)\Gamma(2k+1)}
 \quad\Longrightarrow\quad
 \Lambda_{\mathrm{loc}}<\Lambda_0.
 \label{eq:theta-sufficient}
\end{equation}

\begin{theorem}[Existence below the spectral bottom]
\label{thm:subthreshold-existence}
Let
\[
 n\ge2,\qquad m\ge1,\qquad N=n+m,\qquad
 1\le k<\frac N2.
\]
Assume $\Lambda_0>0$ and $0<\lambda<\Lambda_0$.  The equation
\[
 P_kU-\lambda U=|U|^{q-2}U
\]
has a nontrivial weak solution
$U\in\Hlam=H^k(M)$, with the equation understood in $\Hlam^*$, in either of
the following cases:
\begin{enumerate}
 \item $N\ge4k$;
 \item $2k+2\le N<4k$ and
       $\Lambda_{\mathrm{loc}}<\lambda<\Lambda_0$.
\end{enumerate}
The second interval is nonempty whenever \eqref{eq:theta-sufficient} holds.
\end{theorem}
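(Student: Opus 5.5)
The proof combines Theorem~\ref{thm:subthreshold-compactness} with the strict-gap results of the previous subsection; the scaling relation \(U=\Slam^{1/(q-2)}u\) then produces the solution. Fix \(0<\lambda<\Lambda_0\).

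In case~1, \(N\ge4k\), Lemma~\ref{lem:strict-high-dim} applies directly because \(0<\lambda<\Lambda_0\le\Lambda_0\), and it gives \(\Slam<S_{N,k}\). In case~2, \(2k+2\le N<4k\) and \(\lambda>\Lambda_{\mathrm{loc}}\), so Proposition~\ref{prop:strict-low-dim} gives the same strict inequality. In either case, all hypotheses of Theorem~\ref{thm:subthreshold-compactness} hold. That theorem yields a minimizer \(u\in\Hlam\) with \(\|u\|_q=1\), together with the Euler--Lagrange identity \eqref{eq:subthreshold-EL}. Lemma~\ref{lem:subthreshold-coercivity} identifies \(\Hlam=H^k(M)\).

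Next we check that \(U=\Slam^{1/(q-2)}u\) solves the equation. By the Sobolev bound obtained in the proof of Theorem~\ref{thm:subthreshold-compactness}, \(0<\Slam<\infty\), so the scaling is well defined. Since \(Q_\lambda\) is quadratic and \(|U|^{q-2}U=\Slam^{(q-1)/(q-2)}|u|^{q-2}u\), we compute
\[
 Q_\lambda(U,\phi)=\Slam^{1/(q-2)}\,\Slam\int_M|u|^{q-2}u\phi\,\dd V
 =\Slam^{(q-1)/(q-2)}\int_M|u|^{q-2}u\phi\,\dd V
 =\int_M|U|^{q-2}U\phi\,\dd V .
\]
This holds for every \(\phi\in\Hlam\). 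The map \(\phi\mapsto\int_M|U|^{q-2}U\phi\,\dd V\) belongs to \(\Hlam^*\), by Hölder's inequality and the embedding \(H^k(M)\hookrightarrow L^q(M)\). Hence the equation holds in \(\Hlam^*\), and \(U\neq0\) because \(u\neq0\).

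Finally we show that the second interval is nonempty under \eqref{eq:theta-sufficient}. By the lower bound \eqref{eq:F-local-lower},
\[
 \Lambda_{\mathrm{loc}}=\frac{\mathcal A_{N,k}^{\mathrm{loc}}}{\mathcal F_{n,m,k}}
 \le \mathcal A_{N,k}^{\mathrm{loc}}\cdot 2^{2k}\pi^{-N/2}\frac{\Gamma(N/2+2k+1)}{\Gamma(2k+1)} .
\]
Substituting the second expression in \eqref{eq:A-local} turns the right-hand side into exactly \(\Theta_{N,k}\). The factors combine as \(2^{2k+1}\cdot2^{2k}=2^{4k+1}\), and \(\pi^{N/2}\) cancels. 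So \(\Lambda_0>\Theta_{N,k}\) gives \(\Lambda_{\mathrm{loc}}<\Lambda_0\).

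The one step to check carefully is this constant bookkeeping, which must match the definition of \(\Theta_{N,k}\). Everything else is quoted directly from earlier results.
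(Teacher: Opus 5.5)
Your proposal is correct and matches the paper's proof: the paper also obtains $S_{\lambda,k}(M)<S_{N,k}$ from Lemma~\ref{lem:strict-high-dim} or Proposition~\ref{prop:strict-low-dim}, invokes Theorem~\ref{thm:subthreshold-compactness}, and gets nonemptiness of the second interval from \eqref{eq:F-local-lower} as recorded in \eqref{eq:theta-sufficient}. Your explicit checks of the scaling $U=S_{\lambda,k}(M)^{1/(q-2)}u$ and of the constants, namely $2^{2k+1}\cdot2^{2k}=2^{4k+1}$ and the cancellation of $\pi^{N/2}$, are accurate and simply spell out what the paper leaves implicit.
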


\begin{proof}
Lemma~\ref{lem:strict-high-dim} or
Proposition~\ref{prop:strict-low-dim} gives $\Slam<S_{N,k}$ in the
respective range.  The conclusion follows from
Theorem~\ref{thm:subthreshold-compactness}.
\end{proof}

\begin{remark}
The proposition treats $2k+2\le N<4k$.  Its argument does not cover the
remaining dimension $N=2k+1$, and no subthreshold existence conclusion is
asserted there.
\end{remark}

\section{The spectral threshold}

\begin{proposition}[Positive threshold embedding]
\label{prop:positive-threshold}
Assume
\[
 \Lambda_0>0,
 \qquad
 N\ge2k+2.
\]
Then there exists \(c=c(n,m,k)>0\) such that
\[
 Q_{\Lambda_0}(u)
 \ge c\|u\|_{L^q(M)}^2
 \qquad
 (u\in C_c^\infty(M)).
\]
In particular, \(\Sth>0\), and the embeddings
\[
 \Hcal\hookrightarrow L^q(M),
 \qquad
 \Ecal\hookrightarrow L^q(\R^N)
\]
are continuous.
\end{proposition}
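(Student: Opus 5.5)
The plan is to pull the inequality back to the Euclidean conformal model and apply the Hardy--Sobolev--Maz'ya inequalities quoted in the introduction. Fix \(u\in C_c^\infty(M)\) and set \(w=\mathscr C^{-1}u\), that is, \(w(x,y)=|y|^{-\gamma}u(x,|y|,y/|y|)\). Since \(u\) has compact support in \(M\), the function \(w\) lies in \(C_c^\infty(\R^N\setminus\R^{n-1})\). By \eqref{eq:conformal-energy}--\eqref{eq:conformal-lq},
\[
 Q_{\Lambda_0}(u)=\|w\|_{\dot H^k}^2-\Lambda_0\int_{\R^N}\frac{w^2}{|y|^{2k}}\,\dd z,
 \qquad \|u\|_{L^q(M)}=\|w\|_{L^q(\R^N)}.
\]
So it suffices to prove \(\|w\|_{\Ecal}^2\ge c\|w\|_q^2\) on \(C_c^\infty(\R^N\setminus\R^{n-1})\).

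For \(k\ge2\), I invoke Yang's inequality \eqref{eq:intro-yang-hsm} with \(n_Y=n-1\ge1\), \(m_Y=m+1\ge2\), and \(p=q\). The introduction checks that these dimensions are admissible and that \(q\) is the upper endpoint of Yang's range. The identification \(c_{m+1,k}=\Lambda_0\) follows from the index change \(i=k-j\) in \eqref{eq:yang-hardy-coefficient} together with \eqref{eq:spectral-bottom}; I will write that index change out explicitly. The hypothesis \(N\ge2k+2\) (together with \(2\le k<N/2\)) is where I verify that we are inside Yang's stated range.

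For \(k=1\), I use Maz'ya's cylindrical Hardy--Sobolev inequality with the coefficient \((m-1)^2/4\). This coefficient equals \(p_{1,0}(0)=b_S^2\), and Appendix~\ref{app:scattering} shows \(p_{1,0}(0)=\Lambda_0\). Here \(N\ge4\) is the condition \(N\ge2k+2\). When \(m=1\) one has \(\Lambda_0=0\), which the hypothesis \(\Lambda_0>0\) excludes, so the case is consistent. This gives \(Q_{\Lambda_0}(u)\ge c\|u\|_q^2\) with \(c=C(n,m,k)\), and hence \(\Sth\ge c>0\).

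For the embeddings, the inequality \(\|w\|_q\le c^{-1/2}\|w\|_{\Ecal}\) holds on the dense class. So every \(\|\cdot\|_{\Ecal}\)-Cauchy sequence is Cauchy in \(L^q\), and the identity map extends to a bounded map \(\Ecal\to L^q(\R^N)\). I must check that this extension is injective, so that it is a genuine embedding. Suppose \(w_j\to0\) in \(L^q\) while \(w_j\) is Cauchy in \(\Ecal\) with limit \(W\). Then \(\|W\|_{\Ecal}^2=\lim_j\lim_i\langle w_i,w_j\rangle_{\Ecal}\). I plan to show that each \(\langle w_i,\phi\rangle_{\Ecal}\) with \(\phi\) fixed in the dense class tends to zero, because this pairing equals \(\int w_i\,L\phi\) for the smooth compactly supported function \(L\phi=(-\Delta)^k\phi-\Lambda_0|y|^{-2k}\phi\), which lies in \(L^{q'}\). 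Density then forces \(W=0\). The statement for \(\Hcal\) follows by transporting through the unitary map \(\mathscr C\).

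The main obstacle is not analytic but bookkeeping. I need to confirm that the quoted inequalities are stated for exactly the class \(C_c^\infty(\R^N\setminus\R^{n-1})\), with the same normalization of \(\int(-\Delta)^kw\,w\), and that the hypotheses \(n\ge2\), \(\Lambda_0>0\), \(N\ge2k+2\) place us inside their admissible ranges. The injectivity argument above is the only genuinely new step.
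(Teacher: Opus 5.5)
Your proposal is correct and follows the paper's proof. You transfer to \(C_c^\infty(\R^N\setminus\R^{n-1})\) via \(\mathscr C\), then invoke Yang's polyharmonic Hardy--Sobolev--Maz'ya inequality for \(k\ge2\) (with \(n_Y=n-1\), \(m_Y=m+1\), and \(c_{m+1,k}=\Lambda_0\) via the index change \(i=k-j\)), and Maz'ya's cylindrical inequality with coefficient \((m-1)^2/4=\Lambda_0\) for \(k=1\). Your additional injectivity check for the extended map \(\Ecal\to L^q(\R^N)\), using the pairing \(\langle w_i,\phi\rangle_{\Ecal}=\int w_i\bigl((-\Delta)^k\phi-\Lambda_0|y|^{-2k}\phi\bigr)\,\dd z\), is valid and fills in a point the paper leaves implicit.
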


\begin{proof}
For \(k\ge2\), apply the polyharmonic Hardy--Sobolev--Maz'ya inequality
\cite[Theorem~1.2 and Corollary~5.3]{Yang2021PolyharmonicHSM} with
\[
 n_Y=n-1,\qquad m_Y=m+1.
\]
At the critical exponent its weight is zero, while the Hardy coefficient in
that result is
\[
 c_{m+1,k}
 =
 \min_{\ell\ge0}
 \prod_{j=1}^k
 \left(
  \ell+\frac{m-1}{2}+k+1-2j
 \right)^2.
\]
Equation~\eqref{eq:spectral-bottom}, with \(\rho=0\), identifies this
coefficient with \(\Lambda_0\).
For \(k=1\), the corresponding inequality is
\cite[Section~2.1.7, Corollary~3, Eq.~(2.1.37)]
{Mazya2011SobolevSpaces}, where the Hardy coefficient is
\((m-1)^2/4\).  Equation~\eqref{eq:spectral-bottom} for \(k=1\) identifies it
with \(\Lambda_0\).  The assumption \(\Lambda_0>0\) excludes the
zero-coefficient case.  Setting $\lambda=\Lambda_0$ in
Equations~\eqref{eq:conformal-energy} and
\eqref{eq:conformal-lq} transfers these inequalities to \(M\).
\end{proof}

\begin{theorem}[Threshold extremal]\label{thm:main}
Let
\[
 n\ge2,\qquad m\ge1,\qquad N=n+m,\qquad
 1\le k<\frac N2.
\]
Assume
\[
 \Lambda_0>0,
 \qquad
 N\ge2k+2,
 \qquad
 \Sth<S_{N,k}.
\]
Then \(\Sth\) is attained by a nonzero \(u\in\Hcal\).  After the
normalization \(\|u\|_{L^q(M)}=1\),
\[
 Q_{\Lambda_0}(u)=\Sth
\]
and
\begin{equation}\label{eq:normalized-el}
 Q_{\Lambda_0}(u,\phi)
 =
 \Sth
 \int_M|u|^{q-2}u\phi\,\dd V
 \qquad
 (\phi\in\Hcal).
\end{equation}
Consequently,
\[
 U=\Sth^{1/(q-2)}u
\]
is a nontrivial weak solution of
\[
 P_kU-\Lambda_0U=|U|^{q-2}U
\]
in \(\Hcal^*\).
\end{theorem}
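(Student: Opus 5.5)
The plan is to work entirely in the conformal Euclidean model, where \(\mathscr C:\Ecal\to\Hcal\) is unitary and preserves \(L^q\). There we study a normalized minimizing sequence \(w_j\in C_c^\infty(\R^N\setminus\R^{n-1})\) with
\[
\|w_j\|_{L^q(\R^N)}=1,\qquad \|w_j\|_{\Ecal}^2\to\Sth.
\]
By Proposition~\ref{prop:positive-threshold} the sequence is bounded in \(\Ecal\) and \(0<\Sth<\infty\). The subthreshold argument of Theorem~\ref{thm:subthreshold-compactness} is not available, because the threshold form does not control \(L^2(M)\). Nor can we recenter by arbitrary isometries of \(M\) with the usual local \(H^{k-1}\) compactness. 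We therefore replace the Lions dichotomy by a profile decomposition, following the outline in the introduction.

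\textbf{Step 1: split \(u_j=\mathscr Cw_j\).} Write \(u_j=f_j\otimes 1+u_j^\perp\) along \(\mathscr Y_0\) and its spherical-harmonic complement. The coefficient \(f_j\) is bounded in \(X_k(\Hh^n)\), and \(u_j^\perp\) is bounded in \(H^k(M)\) by the coercivity on \(\ell\ge1\) coming from \(p_{k,\ell}(0)>\Lambda_0\).

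For \(f_j\) we use the low/intermediate/high frequency cocompactness. The low-frequency piece is small in \(L^q\) by the Chen--Hassell \(\rho^2\) bound on the spectral measure together with the weight \(\rho^2\) in \(X_k\). Any nonvanishing of the remaining \(L^q\) mass forces a nonzero weak limit after hyperbolic recentering, that is, after translations and dilations along \(\mathcal A\).

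For \(u_j^\perp\), bounded-geometry localization and the Meyer--Gérard--Oru refined Sobolev inequality show that a nonvanishing critical part is either a \(G_{\mathcal A}\)-profile or a Euclidean bubble with \(\mu_j|\eta_j|\to\infty\).

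\textbf{Step 2: exclude escaping bubbles.} Suppose a profile with \(\mu_j|\eta_j|\to\infty\) appears. In the rescaled frame the Hardy weight \(|y|^{-2k}\) vanishes on every compact set, so that profile carries energy at least \(S_{N,k}\nu^{2/q}\), where \(\nu\) is its \(L^q\) mass. Combining this with energy decoupling, \(\Sth\,(\text{mass})^{2/q}\) for each remaining piece, and the strict concavity of \(t^{2/q}\) gives
\[
\Sth\ge \Sth\sum_i\nu_i^{2/q}+(S_{N,k}-\Sth)\nu^{2/q}.
\]
Since \(\sum_i\nu_i+\nu=1\), this forces \(\nu=0\). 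We then run the \(G_{\mathcal A}\)-profile decomposition, extracting successive weak limits under \(\mathcal T_{b,\sigma}\), which are pairwise orthogonal and decouple both the \(\Ecal\)-energy and the \(L^q\)-mass. This continues until
\[
\lim_{J\to\infty}\limsup_{j\to\infty}\|r_j^{(J)}\|_{L^q}=0.
\]

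\textbf{Step 3: conclude.} Since \(\|w_j\|_q=1\), some profile \(W\ne0\) exists. Recenter the sequence by the corresponding \(\mathcal T_{b_j,\sigma_j}\); these preserve the \(\Ecal\)-norm and the \(L^q\) norm, so the result is still minimizing and converges weakly to \(W\) in \(\Ecal\). The Hilbert splitting
\[
\|w_j\|_{\Ecal}^2=\|W\|_{\Ecal}^2+\|w_j-W\|_{\Ecal}^2+o(1)
\]
needs only weak convergence. The Brézis--Lieb lemma, using a.e. convergence from local compactness away from \(\mathcal A\) together with the profile structure, gives \(1=a+b\) with \(a=\|W\|_q^q>0\). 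The inequality \(\Sth\ge\Sth(a^{2/q}+b^{2/q})\) then yields \(a=1\), so \(u=\mathscr CW\) attains \(\Sth\).

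Differentiating the quotient in directions \(\phi\in\Hcal\) gives \eqref{eq:normalized-el}. The \(L^q\) term is differentiable because \(\Hcal\hookrightarrow L^q\), and scaling by \(\Sth^{1/(q-2)}\) gives the equation with unit coefficient.

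\textbf{Main obstacle.} The hardest step is the cocompactness for the \(\mathscr Y_0\)-coefficient in the homogeneous space \(X_k\). Low hyperbolic frequencies must be shown to carry no critical \(L^q\) mass, even though \(L^2\) control is lost there. I would first prove an estimate \(\|\mathbf 1_{A\le\delta}f\|_{L^q(\Hh^n)}\le C\delta^{\theta}\|f\|_{X_k}\) for some \(\theta>0\). The plan is a \(TT^*\) argument: integrate the Chen--Hassell bound \(\rho^2\) against the density \(\rho^{-2}\) from the \(X_k\) weight near zero, which leaves an integrable factor. The case \(N=2k+2\) may be borderline and need care.
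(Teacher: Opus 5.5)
Your proposal is correct and follows the paper's proof essentially step for step. The ingredients match: the split into the constant spherical mode, treated in $X_k(\Hh^n)$ by the Chen--Hassell $TT^*$ low-frequency bound (the paper gets $\theta=1/2$ for every $2<s<\infty$, so $N=2k+2$ plays no special role there), and its coercive complement, treated by the refined Sobolev inequality; then the strict-gap exclusion of bubbles escaping $\mathcal A$, the $G_{\mathcal A}$-profile decomposition, and the Br\'ezis--Lieb/concavity conclusion. Two small repairs are needed in Step~2. First, the decoupling should subtract $B_j\psi$ with $\psi\in C_c^\infty(\R^N)$ close to $\varphi$, as the paper does, since $B_j\varphi$ is not a priori an element of $\Ecal$. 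Second, your displayed inequality should read $\Sth\ge\Sth\sum_i\nu_i^{2/q}+S_{N,k}\nu^{2/q}\ge\Sth+(S_{N,k}-\Sth)\nu^{2/q}$; as written, with the term $\Sth\nu^{2/q}$ dropped, it does not exclude, for instance, $\nu=1$.
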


\section{Cocompactness after spherical-harmonic decomposition}

Recall from \eqref{eq:spherical-symbol} that the multiplier of \(P_k\) on
the degree-\(\ell\) spherical eigenspace is
\begin{equation}\label{eq:spherical-symbol-recalled}
 p_{k,\ell}(\rho)
 =
 \prod_{r=1}^k
 \left[
  \rho^2+
  \bigl(\ell+b_S+k+1-2r\bigr)^2
 \right].
\end{equation}
Let
\[
 \mathscr H_\ell(\Ss^m)
 =
 \ker\!\left(
  -\Delta_{\Ss^m}-\ell(\ell+m-1)
 \right),
 \qquad
 d_\ell=\dim\mathscr H_\ell(\Ss^m),
\]
and choose an orthonormal basis
\(\{Y_{\ell,a}\}_{a=1}^{d_\ell}\), with
\(Y_{0,1}=|\Ss^m|^{-1/2}\).  The corresponding coefficients of
\(u\) are
\[
 u_{\ell,a}(h)
 =
 \int_{\Ss^m}
 u(h,\theta)\overline{Y_{\ell,a}(\theta)}\,\dd\theta,
 \qquad
 u(h,\theta)
 =
 \sum_{\ell=0}^\infty\sum_{a=1}^{d_\ell}
 u_{\ell,a}(h)Y_{\ell,a}(\theta).
\]
Define
\[
 \mathfrak L_0
 =
 \{\ell\in\mathbb N_0:p_{k,\ell}(0)=\Lambda_0\},
 \qquad
\mathscr Y_0
 =
 \bigoplus_{\ell\in\mathfrak L_0}\mathscr H_\ell(\Ss^m).
\]
Proposition~\ref{prop:scattering-spectrum} shows that the standing
assumption \(\Lambda_0>0\) implies
\[
 \mathfrak L_0=\{0\},
 \qquad
 \mathscr Y_0=\mathscr H_0(\Ss^m)=\operatorname{span}\{1\}.
\]
Let \(P_{\mathscr Y_0}\) be the orthogonal projection in
\(L^2(\Ss^m)\) onto \(\mathscr Y_0\), and set
\[
 \Pi_0=I_{L^2(\Hh^n)}\otimes P_{\mathscr Y_0},
 \qquad
 \Pi_\perp=I-\Pi_0.
\]
Only \(\mathscr Y_0\) is finite dimensional; the product space
\(L^2(\Hh^n)\otimes\mathscr Y_0\) is not.

\begin{lemma}[Multiplier bounds for the spectral-bottom spherical subspace]
\label{lem:multiplier-split}
Assume \(\Lambda_0>0\).  There are constants \(c,C,c_+>0\), depending only
on \(n,m,k\), such that
\begin{align}
 c\rho^2(1+\rho^2)^{k-1}
 &\le p_{k,0}(\rho)-\Lambda_0
 \le C\rho^2(1+\rho^2)^{k-1},
 \label{eq:bottom-multiplier}\\
 p_{k,\ell}(\rho)-\Lambda_0
 &\ge c_+(1+\rho^2+\ell^2)^k,
 &&\ell\ge1.
 \label{eq:nonbottom-multiplier}
\end{align}
Consequently,
\[
 Q_{\Lambda_0}(\Pi_0u)
 \asymp
 \|u_{0,1}\|_{X_k(\Hh^n)}^2
\]
and
\[
 \|\Pi_\perp u\|_{H^k(M)}^2
 \le C Q_{\Lambda_0}(\Pi_\perp u),
\]
where $X_k(\Hh^n)$ is the completion of $C_c^\infty(\Hh^n)$ under the norm
\[
 \|f\|_{X_k}^2
 =
 \int_0^\infty\!\int_{\Ss^{n-1}}
 \rho^2(1+\rho^2)^{k-1}
 |\widehat f(\rho,\omega)|^2
 |\mathbf c(\rho)|^{-2}\,\dd\omega\,\dd\rho.
\]
Here \(\widehat f\) denotes the Helgason--Fourier transform on \(\Hh^n\),
and \(\mathbf c\) is the Harish--Chandra \(c\)-function.
\end{lemma}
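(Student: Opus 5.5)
The plan is to work directly with the product formula \eqref{eq:spherical-symbol-recalled} and then transfer the multiplier bounds to the quadratic form by Plancherel on both factors. Put \(a_{r,\ell}=\ell+b_S+k+1-2r\), so that \(p_{k,\ell}(\rho)=\prod_{r=1}^k(\rho^2+a_{r,\ell}^2)\). By Proposition~\ref{prop:scattering-spectrum}, the hypothesis \(\Lambda_0>0\) gives \(\mathfrak L_0=\{0\}\). Hence \(\Lambda_0=p_{k,0}(0)=\prod_r a_{r,0}^2>0\), so every \(a_{r,0}\neq0\), and \(p_{k,\ell}(0)>\Lambda_0\) for each \(\ell\ge1\).

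For \eqref{eq:bottom-multiplier}, expand the product as a polynomial in \(\rho^2\):
\[
 p_{k,0}(\rho)-\Lambda_0=\sum_{j=1}^k e_j\rho^{2j},
\]
where the \(e_j\) are elementary symmetric functions of the positive numbers \(a_{r,0}^2\). All \(e_j\) are positive, with \(e_1=\Lambda_0\sum_r a_{r,0}^{-2}>0\) and \(e_k=1\). So the difference is bounded above and below by constant multiples of \(\rho^2+\rho^{2k}\), and \(\rho^2+\rho^{2k}\asymp\rho^2(1+\rho^2)^{k-1}\). This part is elementary.

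For \eqref{eq:nonbottom-multiplier}, I would separate large from small \(\ell\) and large from small \(\rho\). Each factor satisfies \(\rho^2+a_{r,\ell}^2\le C(1+\rho^2+\ell^2)\). Moreover \(|a_{r,\ell}|\ge \ell/2\) once \(\ell\ge 2k+|b_S|+2\), say, so for such \(\ell\) and all \(\rho\) we get \(p_{k,\ell}(\rho)\ge c(1+\rho^2+\ell^2)^k\). This quantity is at least \(2\Lambda_0\) once \(\ell\) or \(\rho\) exceeds some \(R\), and then \(p_{k,\ell}-\Lambda_0\ge\tfrac12 p_{k,\ell}\). For \(\rho\) large and \(\ell\) bounded, the same conclusion holds since \(p_{k,\ell}(\rho)\ge\rho^{2k}\). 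The remaining region is compact: finitely many \(\ell\in\{1,\dots,L\}\) and \(\rho\in[0,R]\). There \(p_{k,\ell}(\rho)\ge p_{k,\ell}(0)>\Lambda_0\), because \(p_{k,\ell}\) is nondecreasing in \(\rho\). The difference therefore has a positive minimum, and \((1+\rho^2+\ell^2)^k\) is bounded on this region. Combining the regions gives \(c_+\). The step I expect to need the most care is this case bookkeeping, and specifically the use of the strict inequality \(p_{k,\ell}(0)>\Lambda_0\) for \(\ell\ge1\). That inequality is exactly what the appendix provides; without it a nonconstant harmonic could also sit at the bottom.

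For the consequences, take \(u\in C_c^\infty(M)\) and expand it in spherical harmonics. \(P_k\) acts on \(u_{\ell,a}\otimes Y_{\ell,a}\) as \(p_{k,\ell}(A)\), by \eqref{eq:helgason-functional-calculus}. The Plancherel formula \eqref{eq:helgason-plancherel} together with orthonormality of the \(Y_{\ell,a}\) gives
\[
 Q_{\Lambda_0}(u)=\sum_{\ell,a}\int_0^\infty\!\int_{\Ss^{n-1}}\bigl(p_{k,\ell}(\rho)-\Lambda_0\bigr)|\widehat{u_{\ell,a}}|^2|\mathbf c(\rho)|^{-2}\,\dd\omega\,\dd\rho,
\]
and \(Q_{\Lambda_0}(\Pi_0u)\) is the \(\ell=0\) term, since \(\Pi_0\) and \(\Pi_\perp\) commute with \(P_k\). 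Inserting \eqref{eq:bottom-multiplier} yields \(Q_{\Lambda_0}(\Pi_0u)\asymp\|u_{0,1}\|_{X_k}^2\), with constants absorbing the Plancherel normalization. For \(\Pi_\perp u\), inserting \eqref{eq:nonbottom-multiplier} bounds \(Q_{\Lambda_0}(\Pi_\perp u)\) from below by \(c_+\sum_{\ell\ge1}\int(1+\rho^2+\ell^2)^k|\widehat{u_{\ell,a}}|^2\). As in Lemma~\ref{lem:subthreshold-coercivity}, that sum dominates \(\|\Pi_\perp u\|_{H^k(M)}^2\) up to a constant: the \(H^k(M)\) norm is equivalent to the quadratic form of \((1+A^2-\Delta_{\Ss^m})^k\), whose symbol on each piece is \(\asymp(1+\rho^2+\ell^2)^k\). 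Finally, the statements extend from \(C_c^\infty(M)\) to the completions by density.
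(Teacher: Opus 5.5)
Your proposal is correct and follows essentially the same route as the paper. You expand $p_{k,0}(\rho)-\Lambda_0$ with positive elementary-symmetric coefficients, handle large $\ell$ through $|a_{r,\ell}|\ge\ell/2$, and handle the finitely many small $\ell$ by positivity and compactness using $p_{k,\ell}(0)>\Lambda_0$, then transfer to the quadratic form by Plancherel. Your version also makes explicit the appeal to Proposition~\ref{prop:scattering-spectrum} (that $\mathfrak L_0=\{0\}$ and $p_{k,\ell}(0)>\Lambda_0$ for $\ell\ge1$), which the paper uses only implicitly.
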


\begin{proof}
Write
\[
 a_{0,r}=b_S+k+1-2r,
 \qquad 1\le r\le k.
\]
The assumption \(\Lambda_0>0\) implies \(a_{0,r}\ne0\) for every
\(1\le r\le k\).  Expanding the product in
\eqref{eq:spherical-symbol-recalled} gives
\begin{equation}\label{eq:bottom-polynomial-expansion}
\begin{aligned}
 p_{k,0}(\rho)-p_{k,0}(0)
 &=
 \prod_{r=1}^k(\rho^2+a_{0,r}^2)
 -\prod_{r=1}^ka_{0,r}^2\\
 &=
 \rho^2\sum_{s=1}^k c_s\rho^{2(s-1)},
\end{aligned}
\end{equation}
where every coefficient \(c_s\) is a positive elementary symmetric
polynomial in \(a_{0,1}^2,\ldots,a_{0,k}^2\).  Hence
\[
 0<
 \min_{1\le s\le k}c_s
 \le
 \max_{1\le s\le k}c_s
 <\infty.
\]
Together with
\[
 \sum_{s=1}^k\rho^{2(s-1)}
 \asymp(1+\rho^2)^{k-1},
\]
this proves \eqref{eq:bottom-multiplier}.

For \(\ell\ge1\), first fix \(L\) so large that
\[
 |a_{\ell,r}|\ge\frac{\ell}{2}
 \qquad(\ell\ge L,\ 1\le r\le k).
\]
For such \(\ell\),
\[
 p_{k,\ell}(\rho)
 \ge
 \prod_{r=1}^k\left(\rho^2+\frac{\ell^2}{4}\right)
 \ge
 4^{-k}(\rho^2+\ell^2)^k.
\]
Increasing \(L\) if necessary makes the last expression at least
\(2\Lambda_0\), uniformly in \(\rho\ge0\).  Hence
\[
 p_{k,\ell}(\rho)-\Lambda_0
 \ge c(1+\rho^2+\ell^2)^k
 \qquad(\ell\ge L).
\]
For the finitely many \(1\le\ell<L\), the quotient
\[
 \frac{p_{k,\ell}(\rho)-\Lambda_0}
 {(1+\rho^2+\ell^2)^k}
\]
is continuous and strictly positive on \([0,\infty)\), and tends to one as
\(\rho\to\infty\).  Taking its minimum over this finite set proves
\eqref{eq:nonbottom-multiplier}.  The two norm statements now follow by
Plancherel's theorem for the spherical and Helgason--Fourier decompositions.
\end{proof}

\begin{remark}
The space \(X_k(\Hh^n)\) does not control the full \(L^2\)-norm at
\(\rho=0\).  This is why the constant spherical coefficient \(u_{0,1}\)
is treated separately from the orthogonal spherical-harmonic complement.
\end{remark}

The operator $A$ is nonnegative and self-adjoint on $L^2(\Hh^n)$, with
spectrum $[0,\infty)$.  Let $E_A$ denote its projection-valued spectral
measure.  Thus, for every Borel set $I\subset[0,\infty)$,
\[
 E_A(I)=\mathbf 1_I(A),
 \qquad
 A=\int_{[0,\infty)}\rho\,\dd E_A(\rho).
\]
Under the Helgason--Fourier transform, these operators are the spectral
multipliers
\[
 \widehat{E_A(I)f}(\rho,\omega)
 =
 \mathbf 1_I(\rho)\widehat f(\rho,\omega).
\]
In particular, $E_A(I)$ is an orthogonal projection on $L^2(\Hh^n)$.  The
multiplier formula is norm-decreasing on the dense subspace
$C_c^\infty(\Hh^n)\subset X_k(\Hh^n)$, and hence $E_A(I)$ extends to a
contraction on $X_k(\Hh^n)$.
For \(0<\varepsilon<R\), set
\[
 E_{\le\varepsilon}:=E_A([0,\varepsilon]),
 \qquad
 E_{\varepsilon,R}:=E_A([\varepsilon,R]),
 \qquad
 E_{>R}:=E_A((R,\infty)).
\]
Since $-\Delta_{\Hh^n}=A^2+a_H^2$, the projection $E_{\le\varepsilon}$
selects the part of the spectrum of $-\Delta_{\Hh^n}$ lying in
$[a_H^2,a_H^2+\varepsilon^2]$.

\begin{lemma}[Low-frequency estimate at the spectral bottom]
\label{lem:low-frequency}
For every \(2<s<\infty\), there is \(C=C(n,s)\) such that, for every
$f\in X_k(\Hh^n)$,
\[
 \|E_{\le\varepsilon}f\|_{L^s(\Hh^n)}
 \le
 C\varepsilon^{1/2}\|f\|_{X_k(\Hh^n)}
 \qquad
 (0<\varepsilon\le1).
\]
\end{lemma}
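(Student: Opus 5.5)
The estimate follows from a $TT^*$ argument built on the spectral-measure bound of Chen and Hassell quoted in the introduction. We use
\[
 \|\mathsf e_A(\rho)\|_{L^{s'}(\Hh^n)\to L^s(\Hh^n)}\le C_{n,s}\rho^2,
 \qquad 0<\rho\le1.
\]
First take \(f\in C_c^\infty(\Hh^n)\), so that both sides are finite; the general case follows by density of \(C_c^\infty(\Hh^n)\) in \(X_k(\Hh^n)\) together with the contraction property of \(E_{\le\varepsilon}\) on \(X_k\). On \([0,\varepsilon]\) with \(\varepsilon\le1\) we have \((1+\rho^2)^{k-1}\asymp1\). Hence the \(X_k\)-norm controls \(\|A E_{\le\varepsilon}f\|_{L^2}\), and it is enough to prove
\[
 \|E_{\le\varepsilon}f\|_{L^s}\le C\varepsilon^{1/2}\|A f\|_{L^2}.
\]

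Set \(g=Af\) and write \(E_{\le\varepsilon}f=T g\), where \(T=\mathbf 1_{[0,\varepsilon]}(A)A^{-1}\) is defined through the spectral measure:
\[
 T=\int_0^\varepsilon\rho^{-1}\,\dd E_A(\rho).
\]
We bound \(T:L^2\to L^s\) by duality. Since \(T\) is self-adjoint, \(\|T\|_{L^2\to L^s}^2=\|TT^*\|_{L^{s'}\to L^s}\), and
\[
 TT^*=\int_0^\varepsilon\rho^{-2}\,\mathsf e_A(\rho)\,\dd\rho.
\]
Minkowski's inequality and the Chen--Hassell bound then give
\[
 \|TT^*\|_{L^{s'}\to L^s}\le\int_0^\varepsilon\rho^{-2}\,C_{n,s}\rho^2\,\dd\rho=C_{n,s}\,\varepsilon.
\]
So \(\|T\|_{L^2\to L^s}\le C\varepsilon^{1/2}\), which is exactly the claimed power.

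The main obstacle is justifying these operator identities rigorously. The operator \(A^{-1}\) is unbounded near \(\rho=0\), and \(\mathsf e_A(\rho)\) is defined only as a density. To handle this, we truncate to \([\eta,\varepsilon]\), where every operator is bounded, derive the bound there uniformly in \(\eta\), and then let \(\eta\downarrow0\). The limit is justified in \(L^2\) by Plancherel and transferred to \(L^s\) by Fatou's lemma on a subsequence. Within the truncated range, the representation \(\langle TT^*\phi,\psi\rangle=\int\rho^{-2}\langle\mathsf e_A(\rho)\phi,\psi\rangle\,\dd\rho\) for \(\phi,\psi\in C_c^\infty\) is standard. Applying the Chen--Hassell bound pointwise in \(\rho\) requires only \(s\in(2,\infty)\), which the statement assumes.
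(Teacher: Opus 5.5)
Your proposal is correct and follows essentially the paper's proof: the same \(TT^*\) argument with the Chen--Hassell bound on a truncated band \([\eta,\varepsilon]\), the bound \(\|T_\eta\|_{L^{s'}\to L^2}\le C\varepsilon^{1/2}\) followed by duality to get \(L^2\to L^s\), and then removal of the lower cutoff. The only difference is how \(\eta\downarrow0\) is handled: you use \(L^2\)-convergence of \(E_A([\eta,\varepsilon])f\) plus Fatou on an a.e.\ subsequence, whereas the paper shows that the truncated family is Cauchy in \(L^s\); both steps rest on \(E_A(\{0\})=0\).
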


\begin{proof}
By the definition of $X_k(\Hh^n)$ as a completion, it suffices first to
consider $f\in C_c^\infty(\Hh^n)$.  The spectral representation of $A$ gives
\begin{equation}
 \|Af\|_2^2
 =
 \int_0^\infty\!\int_{\Ss^{n-1}}
 \rho^2|\widehat f(\rho,\omega)|^2
 |\mathbf c(\rho)|^{-2}\,\dd\omega\,\dd\rho
 \le
 \|f\|_{X_k}^2,
 \label{eq:Af-controlled-by-Xk}
\end{equation}
where we used $k\ge1$ and hence $(1+\rho^2)^{k-1}\ge1$.

Let $s'=s/(s-1)$.  Then $1<s'<2$.  On hyperbolic space the spectral measure
is absolutely continuous for $\rho>0$; write
\[
 dE_A(\rho)=\mathsf e_A(\rho)\,\dd\rho.
\]
Chen and Hassell prove, in the present notation, that
\[
 \|\mathsf e_A(\rho)\|_{L^{s'}(\Hh^n)\to L^s(\Hh^n)}
 \le C_{n,s}\rho^2,
 \qquad 0<\rho\le1.
\]
See \cite[Theorem~1.6, Eq.~(1.19)]
{ChenHassell2018SpectralMeasure}.  Real hyperbolic space is nontrapping and
has no resonance at the spectral bottom, so the cited estimate applies.  For
\(0<\delta<\varepsilon\), put
\[
 T_{\delta,\varepsilon}
 =
 A^{-1}E_A([\delta,\varepsilon]).
\]
The cutoff at $\delta>0$ makes $A^{-1}$ bounded on the selected spectral
subspace.  Thus $T_{\delta,\varepsilon}$ is a bounded self-adjoint operator on
$L^2$.  Since $A^{-1}$ commutes with every spectral projection of $A$, the
spectral theorem gives
\[
 T_{\delta,\varepsilon}^2
 =
 A^{-2}E_A([\delta,\varepsilon])
 =
 \int_\delta^\varepsilon\rho^{-2}\mathsf e_A(\rho)\,\dd\rho.
\]
Therefore, for $g\in L^{s'}(\Hh^n)$, the preceding spectral-measure estimate
yields
\begin{align*}
 \|T_{\delta,\varepsilon}^2g\|_s
 &\le
 \int_\delta^\varepsilon
 \rho^{-2}\|\mathsf e_A(\rho)g\|_s\,\dd\rho\\
 &\le
 C_{n,s}\int_\delta^\varepsilon
 \rho^{-2}\rho^2\,\dd\rho\,\|g\|_{s'}\\
 &\le
 C_{n,s}(\varepsilon-\delta)\|g\|_{s'}.
\end{align*}
Consequently,
\[
 \|T_{\delta,\varepsilon}^2\|_{L^{s'}\to L^s}
 \le C(\varepsilon-\delta).
\]
For $g\in L^{s'}(\Hh^n)\cap L^2(\Hh^n)$, self-adjointness and Hölder's
inequality give
\[
 \|T_{\delta,\varepsilon}g\|_2^2
 =
 \bigl\langle
 T_{\delta,\varepsilon}^2g,g
 \bigr\rangle
 \le
 C(\varepsilon-\delta)\|g\|_{s'}^2.
\]
By density, $T_{\delta,\varepsilon}$ therefore extends from $L^{s'}$ to
$L^2$ with norm at most $C(\varepsilon-\delta)^{1/2}$.  If $h\in L^2$ and
$g\in L^{s'}\cap L^2$, self-adjointness gives
\[
 |\langle T_{\delta,\varepsilon}h,g\rangle|
 =
 |\langle h,T_{\delta,\varepsilon}g\rangle|
 \le
 C(\varepsilon-\delta)^{1/2}\|h\|_2\|g\|_{s'}.
\]
The dual characterization of the $L^s$ norm now gives
\begin{equation}
 \|A^{-1}E_A([\delta,\varepsilon])h\|_{L^s}
 \le C\varepsilon^{1/2}\|h\|_2
 \qquad(h\in L^2(\Hh^n)).
 \label{eq:regularized-low-frequency}
\end{equation}

We remove the lower cutoff next.  If
$0<\delta_1<\delta_2<\varepsilon$, applying the same argument to the interval
$[\delta_1,\delta_2]$ gives
\[
 \|T_{\delta_1,\varepsilon}h-T_{\delta_2,\varepsilon}h\|_s
 =
 \|A^{-1}E_A([\delta_1,\delta_2))h\|_s
 \le C(\delta_2-\delta_1)^{1/2}\|h\|_2.
\]
Thus $A^{-1}E_A([\delta,\varepsilon])h$ is Cauchy in $L^s$ as
$\delta\downarrow0$.  Since the spectral bottom is not an $L^2$ eigenvalue,
$E_A(\{0\})=0$, and the limit is denoted by
$A^{-1}E_{\le\varepsilon}h$.  Passing to the limit in
\eqref{eq:regularized-low-frequency} gives
\[
 \|A^{-1}E_{\le\varepsilon}h\|_{L^s}
 \le C\varepsilon^{1/2}\|h\|_2.
\]
Finally, take $h=Af$.  For every $\delta>0$, spectral multipliers commute and
give
\[
 A^{-1}E_A([\delta,\varepsilon])Af
 =E_A([\delta,\varepsilon])f.
\]
As $\delta\downarrow0$, the left-hand side converges in $L^s$ by the preceding
construction, while the right-hand side converges in $L^2$ to
$E_{\le\varepsilon}f$ because $E_A(\{0\})=0$.  The two limits agree as
distributions.  Combining this identity with
\eqref{eq:Af-controlled-by-Xk} proves the estimate for smooth $f$.  The
estimate then extends by density to the contraction $E_{\le\varepsilon}$ on
$X_k(\Hh^n)$.
\end{proof}

The parabolic isometries used below act on \(M\) by
\[
 g_{b,\sigma}(x,r,\theta)
 =
 (\sigma(x-b),\sigma r,\theta),
 \qquad
 b\in\R^{n-1},\quad\sigma>0.
\]
When these maps act on functions on \(\Hh^n\), the \(\theta\)-variable is
suppressed.

\begin{proposition}[Cocompactness for the spectral-bottom hyperbolic coefficient]
\label{prop:bottom-cocompactness}
Let \(f_j\) be bounded in \(X_k(\Hh^n)\).  Suppose that
\[
 f_j\circ g_{b_j,\sigma_j}
 \rightharpoonup0
 \quad\text{in }X_k(\Hh^n)
\]
for every parameter sequence \((b_j,\sigma_j)\).  Then
\[
 f_j\longrightarrow0
 \quad\text{in }L^q(\Hh^n).
\]
\end{proposition}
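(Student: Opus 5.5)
The plan is to split each \(f_j\) into low, intermediate and high hyperbolic frequencies,
\[
 f_j=E_{\le\varepsilon}f_j+E_{\varepsilon,R}f_j+E_{>R}f_j,
\]
using the contractions \(E_A(I)\) on \(X_k(\Hh^n)\), and to treat the three pieces separately. Let \(B=\sup_j\|f_j\|_{X_k}\).

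\textbf{Low frequencies.} Lemma~\ref{lem:low-frequency} with \(s=q\) gives
\[
 \|E_{\le\varepsilon}f_j\|_{L^q}\le C\varepsilon^{1/2}B
\]
uniformly in \(j\). This term is made small by choosing \(\varepsilon\) small.

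\textbf{High frequencies.} For \(\rho\ge R\ge1\) the \(X_k\)-weight satisfies \(\rho^2(1+\rho^2)^{k-1}\asymp(1+\rho^2)^k\). Hence \(E_{>R}f_j\) is bounded in \(H^k(\Hh^n)\) by \(CB\), and \(\|E_{>R}f_j\|_{L^2}\le CR^{-k}B\). The hyperbolic Sobolev embedding \(H^k(\Hh^n)\hookrightarrow L^{2n/(n-2k)}\) (or into every \(L^p\), \(p\ge2\), when \(2k\ge n\)) controls a higher Lebesgue norm; this is an \(n\)-dimensional statement, and \(q=2N/(N-2k)<2n/(n-2k)_+\) because \(m\ge1\). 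So \(q\) lies strictly inside the Sobolev range on \(\Hh^n\). Interpolating between \(L^2\) and that endpoint,
\[
 \|E_{>R}f_j\|_{L^q}\le CR^{-k\theta}B \qquad\text{for some }\theta>0,
\]
which is small once \(R\) is large.

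\textbf{Intermediate frequencies (main step).} Here \(g_j=E_{\varepsilon,R}f_j\) has Fourier support in \([\varepsilon,R]\). On that band the \(X_k\), \(L^2\) and \(H^{s}\) norms are all comparable, for every \(s\), with constants depending on \(\varepsilon,R\). Thus \(g_j\) is bounded in \(H^{s}(\Hh^n)\) for every \(s\), in particular in \(H^{k'}\) with \(k'>n/2\), so it is bounded in \(L^2\cap L^\infty\).

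I would then use the standard bounded-geometry vanishing argument: for \(2<q<\infty\),
\[
 \|g\|_q^q\le C\Bigl(\sup_{P}\|g\|_{L^2(B(P,1))}\Bigr)^{\alpha}\|g\|_{H^{k'}}^{q-\alpha}
\]
for some \(\alpha>0\), in the spirit of \eqref{eq:uniform-local-sobolev}. It then suffices to show that \(\sup_P\|g_j\|_{L^2(B(P,1))}\to0\). Suppose not, with points \(P_j\) at which this local mass stays above a positive constant. The parabolic isometries \(g_{b,\sigma}\) together with rotations act transitively; I would use \(g_{b_j,\sigma_j}\) to move \(P_j\) into a fixed compact set, handling the residual vertical dependence by compactness. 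Since \(A\) commutes with isometries, \(E_{\varepsilon,R}(f_j\circ g_{b_j,\sigma_j})=g_j\circ g_{b_j,\sigma_j}\). By hypothesis \(f_j\circ g_{b_j,\sigma_j}\rightharpoonup0\) in \(X_k\), so \(g_j\circ g_{b_j,\sigma_j}\rightharpoonup0\) in \(X_k\), hence weakly in \(H^{k'}\) because the norms are equivalent on the band. Rellich on a fixed ball then gives local \(L^2\) convergence to \(0\), contradicting the lower bound on the local mass.

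One point needs care. The hypothesis is stated for parabolic maps only, while centering at arbitrary points may need a rotation about the base point. I would handle this by fixing the base point at the rotation center, noting that the \(r\)-coordinate alone can be normalized to \(1\) by \(\sigma_j\), and checking that the \(x\)-coordinate is normalized to \(0\) by \(b_j\). So parabolic maps already act transitively and no rotation is needed.

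\textbf{Conclusion.} Given \(\eta>0\), fix \(\varepsilon\) small and \(R\) large so that the low- and high-frequency contributions are each below \(\eta\) uniformly in \(j\). The intermediate piece then tends to zero in \(L^q\), so \(\limsup_j\|f_j\|_{L^q}\le2\eta\), and letting \(\eta\downarrow0\) gives \(f_j\to0\) in \(L^q(\Hh^n)\).

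I expect the main obstacle to be the intermediate band, specifically justifying the local vanishing inequality with uniform bounded-geometry constants and passing from the \(X_k\) weak convergence to local strong \(L^2\) convergence.
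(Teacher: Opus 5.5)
Your proposal is correct and follows essentially the same route as the paper: the same three-band splitting, Lemma~\ref{lem:low-frequency} for \(E_{\le\varepsilon}f_j\), Sobolev-type decay for \(E_{>R}f_j\), and, on the fixed band \([\varepsilon,R]\), recentering by the simply transitive parabolic group followed by local compactness. The differences are only in details: for high frequencies you interpolate \(\|E_{>R}f_j\|_2\le R^{-k}\|f_j\|_{X_k}\) against an integer-order Sobolev embedding on \(\Hh^n\), where the paper uses the fractional estimate \eqref{eq:hyperbolic-bessel-sobolev} at order \(\alpha_H=nk/N\); and you detect non-vanishing through local \(L^2\) mass plus an \(L^\infty\) bound, where the paper uses local \(L^q\) mass via \eqref{eq:uniform-local-sobolev}.
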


\begin{proof}
Set
\[
 \alpha_H=n\left(\frac12-\frac1q\right)
 =\frac{nk}{N}<k.
\]
For \(0<\varepsilon<1<R\), write
\[
 f_j
 =
 E_{\le\varepsilon}f_j
 +E_{\varepsilon,R}f_j
 +E_{>R}f_j.
\]
Lemma~\ref{lem:low-frequency} gives
\[
 \|E_{\le\varepsilon}f_j\|_q
 \le C\varepsilon^{1/2}\sup_j\|f_j\|_{X_k}.
\]
For the high-frequency term, we establish the following Bessel-potential
Sobolev estimate, initially for \(h\in C_c^\infty(\Hh^n)\).
Since
\[
 0<\alpha_H=\frac{nk}{N}<\frac n2,
 \qquad
 \frac1q=\frac12-\frac{\alpha_H}{n},
\]
the Euclidean fractional Sobolev inequality, transferred through uniform
normal coordinates, gives
\[
 \|\chi_i h\|_{L^q(\Hh^n)}
 \le
 C\|(\chi_i h)\circ\kappa_i^{-1}\|_{H^{\alpha_H}(\R^n)}.
\]
Here \(\{B_i\}\) is a uniformly locally finite cover by fixed-radius
geodesic balls, the normal coordinate chart \(\kappa_i\) is defined on the
concentric ball of twice that radius, and \(\chi_i\) is supported in \(B_i\).
Thus \((\chi_i h)\circ\kappa_i^{-1}\) may be extended by zero to \(\R^n\).
The functions \(\chi_i\) have uniformly bounded derivatives and satisfy
\(\sum_i\chi_i^2=1\).  For every integer \(r\ge0\), the Leibniz
rule, uniform metric bounds, and finite overlap give the localization
equivalence
\[
 \sum_i
 \|(\chi_i h)\circ\kappa_i^{-1}\|_{H^r(\R^n)}^2
 \asymp
 \|h\|_{H^r(\Hh^n)}^2.
\]
Interpolation between the two adjacent integer orders containing
\(\alpha_H\) therefore yields
\[
 \sum_i
 \|(\chi_i h)\circ\kappa_i^{-1}\|_{H^{\alpha_H}(\R^n)}^2
 \le
 C\|(1-\Delta_{\Hh^n})^{\alpha_H/2}h\|_2^2.
\]
Because \(q>2\), finite overlap and the embedding
\(\ell^2\hookrightarrow\ell^q\) imply
\begin{align*}
 \|h\|_q^2
 &\le
 C\left(\sum_i\|\chi_i h\|_q^q\right)^{2/q}\\
 &\le
 C\sum_i
 \|(\chi_i h)\circ\kappa_i^{-1}\|_{H^{\alpha_H}(\R^n)}^2.
\end{align*}
Finally,
\[
 1-\Delta_{\Hh^n}=1+A^2+a_H^2,
\]
so its \(\alpha_H/2\)-power has a norm equivalent to that of
\((1+A^2)^{\alpha_H/2}\).  We have proved
\begin{equation}\label{eq:hyperbolic-bessel-sobolev}
 \|h\|_{L^q(\Hh^n)}
 \le
 C\|(1+A^2)^{\alpha_H/2}h\|_2.
\end{equation}
Density extends this estimate to the spectral Sobolev space defined by the
norm on the right-hand side.
Indeed, Plancherel's theorem and the fact that \(\rho>R>1\) give
\begin{align*}
 \|(1+A^2)^{\alpha_H/2}E_{>R}f_j\|_2^2
 &=
 \int_R^\infty\!\int_{\Ss^{n-1}}
 (1+\rho^2)^{\alpha_H}
 |\widehat f_j(\rho,\omega)|^2
 |\mathbf c(\rho)|^{-2}\,\dd\omega\,\dd\rho,\\
 \frac{(1+\rho^2)^{\alpha_H}}
 {\rho^2(1+\rho^2)^{k-1}}
 &\le
 C\rho^{-2(k-\alpha_H)}
 \le
 CR^{-2(k-\alpha_H)}.
\end{align*}
Combining these two formulas yields
\begin{equation}\label{eq:bottom-high-frequency}
 \|E_{>R}f_j\|_q
 \le
 CR^{-(k-\alpha_H)}\|f_j\|_{X_k}.
\end{equation}

We finally treat the fixed band \([\varepsilon,R]\).  Put
\[
 h_j=E_{\varepsilon,R}f_j.
\]
On this band,
\[
 \|h_j\|_{H^k(\Hh^n)}
 \asymp_{\varepsilon,R}
 \|h_j\|_{X_k(\Hh^n)}.
\]
The spectral projection commutes with every hyperbolic isometry; hence
\[
 h_j\circ g_{b_j,\sigma_j}
 =
 E_{\varepsilon,R}(f_j\circ g_{b_j,\sigma_j})
 \rightharpoonup0
 \quad\text{in }H^k(\Hh^n)
\]
for every parameter sequence.  Suppose that
\(\|h_j\|_q\not\to0\).  The uniform local estimate
\eqref{eq:uniform-local-sobolev}, with \(M\) replaced by \(\Hh^n\),
provides \(r>0\), \(x_j\in\Hh^n\), and \(c_0>0\) such that
\[
 \int_{B(x_j,r)}|h_j|^q\,\dd V_{\Hh^n}\ge c_0.
\]
Choose \(g_j=g_{b_j,\sigma_j}\) with \(g_j(o)=x_j\).  Since
\[
 k>\alpha_H=n\left(\frac12-\frac1q\right),
\]
the local embedding is compact:
\[
 H^k(B(o,2r))\Subset L^q(B(o,r)).
\]
The weak convergence of \(h_j\circ g_j\) in \(H^k\) therefore implies
\[
 \int_{B(o,r)}|h_j\circ g_j|^q\,\dd V_{\Hh^n}\longrightarrow0,
\]
contrary to the preceding lower bound.  Thus
\[
 \|E_{\varepsilon,R}f_j\|_q\longrightarrow0.
\]
Thus
\[
 \limsup_{j\to\infty}\|f_j\|_q
 \le
 C\varepsilon^{1/2}+CR^{-(k-\alpha_H)}.
\]
Letting \(\varepsilon\downarrow0\) and \(R\to\infty\) proves the claim.
\end{proof}

\begin{lemma}[Euclidean critical inverse lemma]
\label{lem:euclidean-inverse}
Let \(F_j\) be bounded in \(H^k(\R^N)\), supported in a fixed compact
set, and suppose
\[
 F_j\rightharpoonup0
 \quad\text{in }H^k(\R^N),
 \qquad
 \limsup_j\|F_j\|_{L^q}>0.
\]
Then, after passing to a subsequence, there are
\(z_j\in\R^N\), \(\kappa_j\to\infty\), and
\(0\ne\varphi\in\dot H^k(\R^N)\) such that
\[
 \kappa_j^{-\gamma}
 F_j(z_j+Z/\kappa_j)
 \rightharpoonup\varphi
 \quad\text{in }\dot H^k(\R^N).
\]
\end{lemma}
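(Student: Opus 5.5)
The plan is to reduce the statement to the refined Sobolev inequality of Meyer--Gérard--Oru \cite{MeyerGerardOru1997RefinedSobolev}. That inequality produces a concentration point and scale. We then extract a weak limit after rescaling, and rule out the possibility that the scale stays bounded by using the compact support and weak convergence.

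\textbf{Step 1: the refined Sobolev inequality.} In the critical case \(\dot H^k(\R^N)\hookrightarrow L^q\), with \(\gamma=(N-2k)/2\), the inequality reads
\[
 \|F\|_{L^q}\le C\|F\|_{\dot H^k}^{2/q}\,\|F\|_{\dot B^{-\gamma}_{\infty,\infty}}^{1-2/q}.
\]
Here the Besov norm is \(\sup_{t>0}t^{\gamma/2}\|e^{t\Delta}F\|_\infty\), or equivalently \(\sup_{\ell\in\mathbb Z}2^{-\gamma\ell}\|\Delta_\ell F\|_\infty\) for a Littlewood--Paley decomposition. Along a subsequence with \(\|F_j\|_q\ge c_0>0\), and since \(\|F_j\|_{\dot H^k}\le C\), we get \(\|F_j\|_{\dot B^{-\gamma}_{\infty,\infty}}\ge c_1>0\). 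So there are dyadic scales \(\kappa_j=2^{\ell_j}\) and points \(z_j\) with
\[
 \kappa_j^{-\gamma}|(\Delta_{\ell_j}F_j)(z_j)|\ge c_1/2 .
\]

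\textbf{Step 2: extracting the profile.} Set \(\varphi_j(Z)=\kappa_j^{-\gamma}F_j(z_j+Z/\kappa_j)\). This rescaling preserves the \(\dot H^k\) norm, so \((\varphi_j)\) is bounded in \(\dot H^k\) and, after passing to a subsequence, \(\varphi_j\rightharpoonup\varphi\) in \(\dot H^k\). The scaling identity gives
\[
 \kappa_j^{-\gamma}(\Delta_{\ell_j}F_j)(z_j)=(\Delta_0\varphi_j)(0)=\int\check\psi(-Z)\varphi_j(Z)\,\dd Z,
\]
where \(\check\psi\) is a Schwartz function whose Fourier transform is supported in an annulus. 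Pairing against such a function is a continuous linear functional on \(\dot H^k\), because \(\widehat{\check\psi}\,|\xi|^{-k}\in L^2\) away from the origin. Passing to the weak limit therefore gives \(|(\Delta_0\varphi)(0)|\ge c_1/2\), and hence \(\varphi\ne0\).

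\textbf{Step 3: \(\kappa_j\to\infty\). This is the main obstacle.} Suppose instead that \(\kappa_j\) stays bounded along a subsequence.

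If \(\kappa_j\to0\), use the fixed compact support \(K\) and Hölder: \(\|\Delta_{\ell}F\|_\infty\le C2^{\ell N}\|F\|_{L^1}\le C_K2^{\ell N}\|F\|_q\). Then \(2^{-\gamma\ell_j}\|\Delta_{\ell_j}F_j\|_\infty\le C2^{\ell_j(N-\gamma)}\to0\), which is impossible since this quantity is at least \(c_1/2\). So \(\kappa_j\) is bounded above and below, and after a subsequence \(\kappa_j\to\kappa\in(0,\infty)\).

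The points \(z_j\) must stay bounded. Indeed, \(\Delta_{\ell}F_j=\check\psi_\ell*F_j\) decays rapidly away from \(K\), uniformly in \(j\), because \(\|F_j\|_{L^1}\) is bounded. Pass to a subsequence with \(z_j\to z_*\). Then
\[
 (\Delta_{\ell_j}F_j)(z_j)=\int\check\psi_{\ell_j}(z_j-w)F_j(w)\,\dd w .
\]
The kernels \(\check\psi_{\ell_j}(z_j-\cdot)\) converge in \(L^2\) (in fact in \(H^{-k}\)) to a fixed Schwartz function. Meanwhile \(F_j\rightharpoonup0\) in \(H^k\), and by Rellich on the fixed compact set, \(F_j\to0\) strongly in \(L^2\). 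Hence \((\Delta_{\ell_j}F_j)(z_j)\to0\), contradicting the lower bound from Step 1.

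Therefore \(\kappa_j\to\infty\). The subsequence, the points \(z_j\), the scales \(\kappa_j\), and the profile \(\varphi\) from Step 2 give the conclusion of the lemma.
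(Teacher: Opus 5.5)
Your proposal is correct and follows essentially the same route as the paper. It uses refined Sobolev detection of a dyadic scale and point, then extracts a nonzero weak limit through the continuity of \(\varphi\mapsto(\Delta_0\varphi)(0)\) on \(\dot H^k\), and finally excludes scales that stay bounded.

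The only differences are minor. You rule out \(\kappa_j\to0\) with an \(L^1\) bound coming from the compact support, where the paper uses Bernstein with the uniform \(L^2\) bound. In the bounded-scale case you use Rellich's strong \(L^2\) convergence, where the paper combines weak convergence with the \(L^2\)-continuity of translated kernels.
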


\begin{proof}
Let \(\Delta_\nu\) be a homogeneous Littlewood--Paley projection to the
annulus \(|\xi|\asymp2^\nu\).  The refined Euclidean Sobolev inequality
\cite{MeyerGerardOru1997RefinedSobolev} gives
\[
 \|F\|_q
 \le
 C\|F\|_{\dot H^k}^{2/q}
 \left(
  \sup_{\nu\in\mathbb Z}
  2^{-\nu\gamma}\|\Delta_\nu F\|_\infty
 \right)^{1-2/q}.
\]
Hence there are \(\nu_j\) and \(z_j\) such that, with
\(\kappa_j=2^{\nu_j}\),
 \begin{equation}\label{eq:refined-sobolev-detection}
 \kappa_j^{-\gamma}|(\Delta_{\nu_j}F_j)(z_j)|\ge c>0.
 \end{equation}
Write
\[
 G_j(Z)=\kappa_j^{-\gamma}F_j(z_j+Z/\kappa_j).
\]
The sequence \(G_j\) is bounded in \(\dot H^k\).  After rescaling,
\(\Delta_{\nu_j}\) becomes a fixed annular projection \(\Delta_0\), and
the preceding lower bound is \(|(\Delta_0G_j)(0)|\ge c\).  Point evaluation
after the fixed projection \(\Delta_0\) is a continuous linear functional
on \(\dot H^k\).  Hence any weak limit \(\varphi\) satisfies
\(|(\Delta_0\varphi)(0)|\ge c\), and in particular \(\varphi\ne0\).

If \(\kappa_j\to0\), Bernstein's inequality and the uniform \(L^2\)
bound imply
\[
 \kappa_j^{-\gamma}\|\Delta_{\nu_j}F_j\|_\infty
 \le
 C\kappa_j^{-\gamma}\kappa_j^{N/2}\|F_j\|_2
 =
 C\kappa_j^k\|F_j\|_2
 \longrightarrow0,
\]
a contradiction.  Suppose next that
\[
 0<c\le\kappa_j\le C.
\]
Since \(\kappa_j=2^{\nu_j}\), passage to a subsequence makes
\(\nu_j=\nu_0\) independent of \(j\).  Let \(K_{\nu_0}\) be the Schwartz
kernel of \(\Delta_{\nu_0}\), and let \(K_0\) be a compact set containing
all supports of \(F_j\).  If \(|z_j|\to\infty\), then
\[
 |(\Delta_{\nu_0}F_j)(z_j)|
 \le
 \|F_j\|_2
 \|K_{\nu_0}(z_j-\cdot)\|_{L^2(K_0)}
 \longrightarrow0.
\]
If \(z_j\) is bounded, pass to \(z_j\to z_\infty\).  The map
\[
 F\longmapsto(\Delta_{\nu_0}F)(z_\infty)
\]
is a continuous linear functional on \(H^k(\R^N)\), and the continuity of
translations of \(K_{\nu_0}\) in \(L^2(K_0)\) gives
\[
 (\Delta_{\nu_0}F_j)(z_j)
 -
 (\Delta_{\nu_0}F_j)(z_\infty)
 \longrightarrow0.
\]
The weak convergence \(F_j\rightharpoonup0\) makes the second term tend to
zero.  Both alternatives contradict the lower bound in
\eqref{eq:refined-sobolev-detection}.  Therefore
\(\kappa_j\to\infty\).
\end{proof}

Let
\[
 \mathcal A=\R^{n-1}\times\{0\}\subset\R^N
\]
be the singular axis of the weight \(|y|^{-2k}\).  Define the
axis-preserving critical transformations
\[
 (\mathcal T_{b,\sigma}w)(x,y)
 =
 \sigma^\gamma w(\sigma(x-b),\sigma y),
 \qquad
 b\in\R^{n-1},\quad\sigma>0.
\]
They are unitary on \(\Ecal\), isometric on \(L^q\), and
\[
 \mathscr C(\mathcal T_{b,\sigma}w)
 =
 (\mathscr Cw)\circ g_{b,\sigma}.
\]
We write
\[
 G_{\mathcal A}
 =
 \{\mathcal T_{b,\sigma}:b\in\R^{n-1},\ \sigma>0\}
\]
for this group.  Every element of \(G_{\mathcal A}\) maps
\(\mathcal A\) to itself.  Conversely, under \(\mathscr C\) it is exactly
the critical action induced by the parabolic hyperbolic isometries
\(g_{b,\sigma}\).

For \(\zeta=(\xi,\eta)\in\R^{n-1}\times\R^{m+1}\) and \(\mu>0\), set
\[
 (\mathcal B_{\zeta,\mu}\varphi)(z)
 =
 \mu^\gamma\varphi(\mu(z-\zeta)).
\]
For a sequence
\[
 B_j=\mathcal B_{(\xi_j,\eta_j),\mu_j},
\]
the concentration scale is \(s_j=\mu_j^{-1}\), whereas the distance of
its center from \(\mathcal A\) is
\[
 d_j
 =
 \operatorname{dist}((\xi_j,\eta_j),\mathcal A)
 =
 |\eta_j|.
\]
We call \((B_j)\) a Euclidean concentration family escaping
\(\mathcal A\) if
\begin{equation}\label{eq:axis-escaping-packet}
 \frac{d_j}{s_j}
 =
 \mu_j|\eta_j|
 \longrightarrow\infty.
\end{equation}
Equivalently, after applying \(B_j^{-1}\), the distance of the rescaled
singular axis from the origin tends to infinity.
We denote by \(\langle\cdot,\cdot\rangle_{\Ecal}\) and
\(\langle\cdot,\cdot\rangle_{\dot H^k}\) the polarizations of the
corresponding squared norms.  For \(\chi\in C_c^\infty(\R^N)\), define
\begin{equation}\label{eq:packet-functional}
 L_{j,\chi}(w)
 =
 \langle w,B_j\chi\rangle_{\Ecal}
 +
 \Lambda_0
 \int_{\R^N}\frac{wB_j\chi}{|y|^{2k}}\,\dd z.
\end{equation}
For smooth functions this is
\[
 L_{j,\chi}(w)
 =
 \langle B_j^{-1}w,\chi\rangle_{\dot H^k}.
\]
A bounded sequence \(w_j\subset\Ecal\) is said to have no Euclidean profile
escaping \(\mathcal A\) if
\begin{equation}\label{eq:no-axis-escaping-profile}
 L_{j,\chi}(w_j)\longrightarrow0
\end{equation}
for every family satisfying \eqref{eq:axis-escaping-packet} and every
\(\chi\in C_c^\infty(\R^N)\).  This is a local testing condition; it does
not assert that \(B_j^{-1}w_j\) is globally bounded in \(\dot H^k\).

\begin{figure}[t]
\centering
\begin{tikzpicture}[x=1.05cm,y=1.05cm,font=\small]
 \draw[->] (-.2,0)--(5.2,0) node[right] {\(x\)};
 \node[left] at (-.2,0) {\(\mathcal A\)};
 \draw[thick,blue] (2,0.65) circle (0.48);
 \draw[<->] (2.65,0)--(2.65,0.65);
 \node[right] at (2.65,0.33) {\(d_j\)};
 \draw[->] (2,0.65)--(2.45,0.82);
 \node[above left] at (2.38,0.9) {\(s_j\)};
 \node[align=center] at (2.5,-0.75)
 {\(\displaystyle d_j/s_j=O(1)\)\\axis remains visible};

 \begin{scope}[xshift=7.2cm]
  \draw[->] (-.2,0)--(5.2,0) node[right] {\(x\)};
  \node[left] at (-.2,0) {\(\mathcal A\)};
  \draw[thick,red] (2.3,1.65) circle (0.25);
  \draw[<->] (2.8,0)--(2.8,1.65);
  \node[right] at (2.8,0.82) {\(d_j\)};
  \draw[->] (2.3,1.65)--(2.53,1.75);
  \node[above left] at (2.48,1.84) {\(s_j\)};
  \node[align=center] at (2.5,-0.75)
  {\(\displaystyle d_j/s_j\to\infty\)\\rescaled distance to axis diverges};
 \end{scope}
\end{tikzpicture}
\caption{The two concentration regimes relative to the singular axis
\(\mathcal A\).  If the ratio of the distance to \(\mathcal A\) and the
concentration scale remains bounded, the rescaled limit is represented by
the axis-preserving group \(G_{\mathcal A}\).  If this ratio tends to
infinity, the rescaled distance to the singular axis diverges, and the
limiting quadratic form is the Euclidean \(\dot H^k\)-energy.}
\label{fig:two-concentration-regimes}
\end{figure}

\begin{theorem}[Threshold cocompactness after spherical-harmonic decomposition]
\label{thm:modewise-cocompactness}
Assume \(\Lambda_0>0\) and \(\Sth>0\).  Let \(u_j\) be bounded in
\(\Hcal\).  Suppose
\[
 u_j\circ g_{b_j,\sigma_j}
 \rightharpoonup0
 \quad\text{in }\Hcal
\]
for every parameter sequence, and suppose that
 \(\mathscr C^{-1}u_j\) has no Euclidean profile escaping
 \(\mathcal A\).  Then
\[
 u_j\longrightarrow0
 \quad\text{in }L^q(M).
\]
\end{theorem}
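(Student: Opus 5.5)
The plan is to split \(u_j=\Pi_0u_j+\Pi_\perp u_j\) and to treat the two pieces with different tools. The spectral-bottom piece is handled by the hyperbolic cocompactness of Proposition~\ref{prop:bottom-cocompactness}. The coercive piece is handled by bounded-geometry localization and the Euclidean inverse Lemma~\ref{lem:euclidean-inverse}. The no-escaping-profile hypothesis is used to exclude any concentration that the localization finds.

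\emph{Step 1: the constant spherical mode.} By Lemma~\ref{lem:multiplier-split}, \(f_j:=u_{0,1}\) is bounded in \(X_k(\Hh^n)\), and \(\Pi_\perp u_j\) is bounded in \(H^k(M)\). The parabolic isometries \(g_{b,\sigma}\) act only on the \(\Hh^n\)-factor, so they commute with \(\Pi_0\) and \(\Pi_\perp\). Pairing against test functions of the form \(\phi\,Y_{0,1}\) therefore shows that \(f_j\circ g_{b_j,\sigma_j}\rightharpoonup0\) in \(X_k\) for every parameter sequence. Proposition~\ref{prop:bottom-cocompactness} then gives \(f_j\to0\) in \(L^q(\Hh^n)\). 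Hence \(\Pi_0u_j=f_jY_{0,1}\to0\) in \(L^q(M)\), because \(Y_{0,1}\) is constant on the compact factor \(\Ss^m\). By the same commutation, \(v_j:=\Pi_\perp u_j\) satisfies \(v_j\circ g_{b_j,\sigma_j}\rightharpoonup0\) in \(H^k(M)\) for all parameter sequences.

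\emph{Step 2: detecting concentration in the coercive part.} Suppose \(\|v_j\|_q\not\to0\). The uniform local Sobolev estimate \eqref{eq:uniform-local-sobolev} gives points \(P_j=(h_j,\theta_j)\) with \(\int_{B(P_j,r)}|v_j|^q\ge c_0\). The parabolic group acts simply transitively on \(\Hh^n\), so we can choose \(g_j\) with \(g_j(o)=h_j\). We then pass to \(\theta_j\to\theta_\infty\) by compactness of \(\Ss^m\). The recentred functions \(\tilde v_j=v_j\circ g_j\) converge weakly to \(0\) in \(H^k(M)\), and therefore strongly to \(0\) in \(H^{k-1}_{\mathrm{loc}}\). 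Cut off with a fixed \(\chi\) near \((o,\theta_\infty)\) and write in a fixed chart; the chart can be taken to be the conformal Euclidean model near a point \(p_0=(0,y_0)\) with \(|y_0|=1\). The resulting \(F_j\) are bounded in \(H^k(\R^N)\), compactly supported, weakly null, and have \(\limsup\|F_j\|_q>0\). Lemma~\ref{lem:euclidean-inverse} then yields centres \(z_j\) near \(p_0\), scales \(\kappa_j\to\infty\), and a nonzero profile \(\varphi\). Since the scale tends to infinity, \(\tilde v_j\) genuinely concentrates at shrinking scale inside a fixed compact set away from \(\mathcal A\).

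\emph{Step 3: contradiction with the no-escaping-profile hypothesis.} Undo the recentring with \(\mathcal T_{b_j,\sigma_j}\in G_{\mathcal A}\), using \(\mathscr C(\mathcal T w)=(\mathscr Cw)\circ g\). The concentration then becomes a Euclidean family \(B_j=\mathcal B_{\zeta_j,\mu_j}\) for \(w_j=\mathscr C^{-1}u_j\) with \(\mu_j|\eta_j|\asymp\kappa_j\to\infty\), because the ratio \(d_j/s_j\) is invariant under \(G_{\mathcal A}\). The weak limit \(\varphi\ne0\) gives some \(\chi\in C_c^\infty\) with \(\langle B_j^{-1}\mathscr C^{-1}v_j,\chi\rangle_{\dot H^k}\not\to0\). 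Here the conformal factor \(r^\gamma\) and the metric coefficients are smooth and nondegenerate near \(|y|=1\), so at vanishing scale they only contribute factors tending to constants. It remains to show that the \(\Pi_0\)-part does not contribute to \(L_{j,\chi}(w_j)\). I would argue that \(B_j^{-1}\mathscr C^{-1}\Pi_0u_j\) is locally bounded in \(\dot H^k\) on compact sets and tends to \(0\) in \(L^q_{\mathrm{loc}}\). For the high frequencies this uses the \(X_k\)-control, and for the low frequencies it uses Lemma~\ref{lem:low-frequency} together with elliptic smoothing. Its pairing with the fixed \(\chi\) then vanishes. Hence \(L_{j,\chi}(w_j)\not\to0\) for an escaping family, which contradicts the hypothesis. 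Therefore \(v_j\to0\) in \(L^q\), and with Step 1, \(u_j\to0\) in \(L^q(M)\).

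\emph{Main obstacle.} The hardest step is Step 3. It requires transferring the chart profile into a statement about the functional \(L_{j,\chi}\) in the conformal model, and it requires showing that the non-coercive \(\Pi_0\)-component, which is not bounded in \(H^k\) globally, has negligible local \(\dot H^k\)-pairing at scales \(\kappa_j^{-1}\to0\). I would first try to get local \(H^k\) bounds for \(\Pi_0u_j\) on unit balls, uniform in the centre. Splitting \(f_j=E_{\le1}f_j+E_{>1}f_j\), the high part is bounded in \(H^k(\Hh^n)\). The low part is bounded in \(L^s\) for all \(s>2\) by Lemma~\ref{lem:low-frequency}, and is spectrally localized, so local elliptic estimates bound all of its derivatives on unit balls. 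Such a uniformly smooth piece cannot produce a profile at scales tending to zero.
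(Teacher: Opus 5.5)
Your proposal is correct and follows the paper's proof essentially step for step: the \(\Pi_0/\Pi_\perp\) splitting, Proposition~\ref{prop:bottom-cocompactness} for the constant spherical coefficient, and the uniform local Sobolev estimate with parabolic recentering and Lemma~\ref{lem:euclidean-inverse} for \(\Pi_\perp u_j\). The paper also uses the \(G_{\mathcal A}\)-invariance of \(\mu_j|\eta_j|\) to produce an escaping family, and it additionally proves two details you only assert: the detected centres approach \(\operatorname{supp}\zeta\) by decay of the Littlewood--Paley kernel, and the cutoff value at the limiting centre is nonzero because \(\varphi\ne0\).

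The step you call the main obstacle is in fact immediate and needs no local \(\dot H^k\) bounds, frequency splitting, or elliptic smoothing. By density and \(\Ecal\hookrightarrow L^q\), one has \(L_{j,\chi}(w)=\int_{\R^N}B_j^{-1}w\,(-\Delta)^k\chi\,\dd Z\) on \(\Ecal\) for large \(j\). H\"older and the \(L^q\)-isometry of \(\mathscr C\) and \(B_j\) then give \(|L_{j,\chi}(\mathscr C^{-1}\Pi_0u_j)|\le\|\Pi_0u_j\|_{L^q(M)}\|(-\Delta)^k\chi\|_{L^{q'}}\to0\) directly from your Step~1, which is exactly how the paper concludes.
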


\begin{proof}
The projections \(\Pi_0\) and \(\Pi_\perp\) commute with the parabolic
action.  The constant spherical coefficient \(u_{j,0,1}\) is bounded in
\(X_k(\Hh^n)\) and satisfies the hypothesis of
Proposition~\ref{prop:bottom-cocompactness}.  Hence
\[
 \|u_{j,0,1}\|_{L^q(\Hh^n)}\longrightarrow0.
\]
\begin{align}
 \|\Pi_0u_j\|_{L^q(M)}
 &\le
 \|Y_{0,1}\|_{L^q(\Ss^m)}
 \|u_{j,0,1}\|_{L^q(\Hh^n)}
 \longrightarrow0.
 \label{eq:bottom-spherical-lq-vanishing}
\end{align}

Set
\[
 v_j=\Pi_\perp u_j,
 \qquad
 w_j=\mathscr C^{-1}v_j.
\]
The orthogonality of the spherical decomposition and
\eqref{eq:nonbottom-multiplier} imply
\[
 \|v_j\|_{H^k(M)}^2
 \le
 C Q_{\Lambda_0}(v_j)
 \le
 C Q_{\Lambda_0}(u_j).
\]
In particular, \((v_j)\) is bounded in \(H^k(M)\).  Moreover,
\eqref{eq:conformal-energy} gives
\begin{equation}\label{eq:nonbottom-euclidean-hk-bound}
 \|w_j\|_{\dot H^k(\R^N)}^2
 =
 Q_{\Lambda_0}(v_j)
 +\Lambda_0\|v_j\|_{L^2(M)}^2
 \le C.
\end{equation}

Suppose that \(\|v_j\|_{L^q(M)}\not\to0\).  Applying
\eqref{eq:uniform-local-sobolev} to \(v_j\), there are \(r_0>0\),
\(\varepsilon_0>0\), and points
\[
 x_j=(h_j,\theta_j)\in\Hh^n\times\Ss^m
\]
such that
\begin{equation}\label{eq:nonbottom-local-mass}
 \int_{B(x_j,r_0)}|v_j|^q\,\dd V\ge\varepsilon_0.
\end{equation}
Choose a parabolic isometry \(g_j=g_{b_j,\sigma_j}\) with
\(g_j(o)=h_j\), and put
\[
 \widetilde v_j=v_j\circ g_j,
 \qquad
 \widetilde w_j
 =
 \mathcal T_{b_j,\sigma_j}w_j
 =
 \mathscr C^{-1}\widetilde v_j.
\]
After passing to a subsequence, \(\theta_j\to\theta_\infty\).  The
hypothesis of the theorem and \eqref{eq:nonbottom-multiplier} give
\[
 \widetilde v_j\rightharpoonup0
 \quad\text{in }H^k(M).
\]

Choose an open set
\(\Omega\Subset\R^N\setminus\mathcal A\) whose image under \(\mathscr C\)
contains a fixed product neighborhood of \((o,\theta_\infty)\), and choose
\(0\le\zeta\in C_c^\infty(\Omega)\) equal to one on a smaller neighborhood
containing the balls in \eqref{eq:nonbottom-local-mass} after recentering.
Then
\[
 F_j=\zeta\widetilde w_j
\]
is supported in a fixed compact subset of \(\Omega\), is bounded in
\(H^k(\R^N)\) by \eqref{eq:nonbottom-euclidean-hk-bound} and local
equivalence of the weighted and unweighted norms, and satisfies
\[
 F_j\rightharpoonup0\quad\text{in }H^k(\R^N),
 \qquad
 \limsup_{j\to\infty}\|F_j\|_{L^q(\R^N)}>0.
\]
Lemma~\ref{lem:euclidean-inverse} therefore provides
\[
 \widetilde B_j
 =
 \mathcal B_{\widetilde\zeta_j,\widetilde\mu_j},
 \qquad
 \widetilde\mu_j\longrightarrow\infty,
 \qquad
 \widetilde B_j^{-1}F_j
 \rightharpoonup\varphi\ne0
 \quad\text{in }\dot H^k(\R^N).
\]
The lower bound \eqref{eq:refined-sobolev-detection} and the rapid decay of
the Littlewood--Paley kernel imply
\[
 \operatorname{dist}
 (\widetilde\zeta_j,\operatorname{supp}\zeta)\longrightarrow0.
\]
Since \(\operatorname{supp}\zeta\Subset\R^N\setminus\mathcal A\), there
are constants \(c,C>0\) such that, after extraction,
\begin{equation}\label{eq:recentered-packet-away-axis}
 c\le|\widetilde\eta_j|\le C,
 \qquad
 \widetilde\zeta_j=(\widetilde\xi_j,\widetilde\eta_j).
\end{equation}
Furthermore,
\[
 \widetilde B_j^{-1}F_j(Z)
 =
 \zeta\!\left(
  \widetilde\zeta_j+\frac{Z}{\widetilde\mu_j}
 \right)
 \widetilde B_j^{-1}\widetilde w_j(Z).
\]
After passing to a subsequence, assume that
\(\widetilde\zeta_j\to\widetilde\zeta_\infty\).  Put
\[
 c_\zeta=\zeta(\widetilde\zeta_\infty),
 \qquad
 a_j(Z)=\zeta\!\left(
  \widetilde\zeta_j+\frac{Z}{\widetilde\mu_j}
 \right).
\]
Then \(a_j\to c_\zeta\) in \(C^k_{\mathrm{loc}}\).  The sequence
\(\widetilde B_j^{-1}\widetilde w_j\) is bounded in \(\dot H^k\) by
\eqref{eq:nonbottom-euclidean-hk-bound}.  If \(c_\zeta=0\), the Leibniz rule
and the local \(C^k\)-convergence of \(a_j\) would give
\[
 a_j\widetilde B_j^{-1}\widetilde w_j
 \rightharpoonup0
 \quad\text{in }\dot H^k_{\mathrm{loc}},
\]
contrary to
\(a_j\widetilde B_j^{-1}\widetilde w_j
=\widetilde B_j^{-1}F_j\rightharpoonup\varphi\ne0\).
Thus \(c_\zeta>0\).  Moreover, for every compactly supported test function,
the same Leibniz calculation gives
\[
 \widetilde B_j^{-1}\widetilde w_j
 \rightharpoonup c_\zeta^{-1}\varphi
 \quad\text{in }\dot H^k_{\mathrm{loc}}.
\]
Consequently there exists \(\chi\in C_c^\infty(\R^N)\) such that
\begin{equation}\label{eq:recentered-nonzero-packet-functional}
 \limsup_{j\to\infty}
 \left|
 \left\langle
  \widetilde B_j^{-1}\widetilde w_j,\chi
 \right\rangle_{\dot H^k}
 \right|>0.
\end{equation}

In the original coordinates, define
\[
 B_j
 =
 \mathcal T_{b_j,\sigma_j}^{-1}\widetilde B_j
 =
 \mathcal B_{\zeta_j,\mu_j}.
\]
A direct composition of the two critical transformations gives
\begin{equation}\label{eq:packet-composition}
 \mu_j=\frac{\widetilde\mu_j}{\sigma_j},
 \qquad
 \zeta_j
 =
 (-\sigma_jb_j,0)+\sigma_j\widetilde\zeta_j,
 \qquad
 B_j^{-1}w_j
 =
 \widetilde B_j^{-1}\widetilde w_j.
\end{equation}
In particular,
\begin{equation}\label{eq:packet-axis-ratio-invariant}
 \mu_j|(\zeta_j)_y|
 =
 \widetilde\mu_j|\widetilde\eta_j|
 \longrightarrow\infty
\end{equation}
by \eqref{eq:recentered-packet-away-axis}.  Thus \((B_j)\) is a Euclidean
concentration family escaping \(\mathcal A\), and
\eqref{eq:recentered-nonzero-packet-functional} shows that
\(B_j^{-1}w_j\rightharpoonup c_\zeta^{-1}\varphi\ne0\) locally in
\(\dot H^k\); equivalently, its pairing with the test function in that
display does not vanish.

Finally, set
\[
 w_j^{(0)}=\mathscr C^{-1}(\Pi_0u_j).
\]
For every \(\chi\in C_c^\infty(\R^N)\), the identity defining
\(L_{j,\chi}\), Hölder's inequality, and
\eqref{eq:bottom-spherical-lq-vanishing} give
\begin{align}
 |L_{j,\chi}(w_j^{(0)})|
 &=
 \left|
 \int_{\R^N}
 B_j^{-1}w_j^{(0)}(-\Delta)^k\chi\,\dd Z
 \right| \notag\\
 &\le
 \|w_j^{(0)}\|_{L^q(\R^N)}
 \|(-\Delta)^k\chi\|_{L^{q'}(\R^N)}
 \longrightarrow0.
 \label{eq:bottom-packet-functional-vanishing}
\end{align}
Together with \eqref{eq:bottom-packet-functional-vanishing},
\eqref{eq:recentered-nonzero-packet-functional} implies that
\(\mathscr C^{-1}u_j=w_j+w_j^{(0)}\) does not satisfy
\eqref{eq:no-axis-escaping-profile}, contrary to the hypothesis.  Hence
\(\Pi_\perp u_j\to0\) in \(L^q(M)\).  Together with
\eqref{eq:bottom-spherical-lq-vanishing}, this proves the theorem.
\end{proof}

\section{Profile decomposition at the spectral threshold}

\subsection{Euclidean profiles escaping the singular axis}

\begin{lemma}[Extraction at Euclidean scales far from the singular axis]
\label{lem:packet-to-profile}
Let \(w_j\) be bounded in \(\Ecal\) and in \(L^q(\R^N)\).  Let
\[
 B_j=\mathcal B_{(\xi_j,\eta_j),\mu_j},
 \qquad
 \mu_j|\eta_j|\to\infty.
\]
If, for some \(\chi_0\in C_c^\infty(\R^N)\),
\[
 \limsup_j|L_{j,\chi_0}(w_j)|>0,
\]
then, after passing to a subsequence, there is
\[
 0\ne\varphi\in\dot H^k(\R^N)\cap L^q(\R^N)
\]
such that
\[
 L_{j,\chi}(w_j)
 \longrightarrow
 \langle\varphi,\chi\rangle_{\dot H^k}
\]
for every \(\chi\in C_c^\infty(\R^N)\).  After a further extraction,
\[
 B_j^{-1}w_j\longrightarrow\varphi
\quad\text{locally almost everywhere}.
\]
\end{lemma}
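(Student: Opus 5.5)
Our plan is to work with the rescaled sequence \(W_j=B_j^{-1}w_j\), i.e. \(W_j(Z)=\mu_j^{-\gamma}w_j(\zeta_j+Z/\mu_j)\), where \(\zeta_j=(\xi_j,\eta_j)\). The key point is that \(W_j\) is locally bounded in \(\dot H^k\) even though it need not be globally bounded. Under \(B_j^{-1}\) the singular axis becomes \(\{Y=-\mu_j\eta_j\}\), which leaves every ball \(B_R(0)\) because \(\mu_j|\eta_j|\to\infty\). On \(B_{2R}(0)\) the rescaled weight satisfies \(|\mu_j\eta_j+Y|^{-2k}\le C_R(\mu_j|\eta_j|)^{-2k}\to0\).

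First we bound \(W_j\) locally. Pick a cutoff \(\psi_R\in C_c^\infty(B_{2R})\) equal to one on \(B_R\), and set \(\Psi_{R,j}=B_j\psi_R\). The functions \(\Psi_{R,j}w_j\) are supported in the region where \(|y|\ge |\eta_j|/2\). We bound \(\|\psi_R W_j\|_{\dot H^k}=\|\Psi_{R,j}w_j\|_{\dot H^k}\) using two inputs: the \(L^q\) bound together with Hölder, which controls the lower-order Leibniz terms via \(\|\nabla^{t}\psi_R\|_{L^{N/t}}\) (scale invariant), and the \(\Ecal\)-bound for the top-order part. The top-order part is the delicate one, because \(\|\cdot\|_{\Ecal}\) does not localize trivially. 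We handle it by expanding \(\|\Psi w\|_{\Ecal}^2\) through commutators \([(-\Delta)^k,\Psi]\), using the Hardy term's smallness on the support and interpolation (Gagliardo–Nirenberg) between \(L^q\) and the energy. As a fallback we use the spectral splitting of Lemma~\ref{lem:multiplier-split}: the \(\Pi_\perp\)-part is bounded in \(\dot H^k\) by \eqref{eq:nonbottom-euclidean-hk-bound}, and the \(\Pi_0\)-part, which corresponds to a function \(f\in X_k\), has \(\dot H^k\) energy locally controlled, since on a region where \(|y|\asymp|\eta_j|\) the hyperbolic chart has bounded geometry. This local bound is the step we expect to be the main obstacle.

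Granting the local bound, a diagonal extraction over \(R\in\mathbb N\) gives \(W_j\rightharpoonup\varphi\) weakly in \(H^k_{\mathrm{loc}}\), strongly in \(H^{k-1}_{\mathrm{loc}}\), and almost everywhere after a further subsequence; this yields the last assertion. Weak lower semicontinuity on each ball, uniform in \(R\), together with Fatou applied to \(\|W_j\|_q=\|w_j\|_q\), gives \(\varphi\in\dot H^k\cap L^q\).

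Finally we compute \(L_{j,\chi}(w_j)=\langle W_j,\chi\rangle_{\dot H^k}=\int W_j(-\Delta)^k\chi\,\dd Z\), valid by density. Since \((-\Delta)^k\chi\) is compactly supported, weak local convergence gives the limit \(\int\varphi(-\Delta)^k\chi=\langle\varphi,\chi\rangle_{\dot H^k}\); in fact this already follows from weak \(L^q\) convergence. Applying this with \(\chi_0\) shows \(\langle\varphi,\chi_0\rangle\ne0\), hence \(\varphi\ne0\).
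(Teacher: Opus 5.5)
Your identification of the limit is sound. By density \(L_{j,\chi}(w_j)=\int W_j(-\Delta)^k\chi\,\dd Z\), weak \(L^q\)-compactness of \(W_j=B_j^{-1}w_j\) gives convergence of these pairings for every \(\chi\), and \(\chi_0\) gives nontriviality. The gap is the step you flag and then grant, namely a bound for \(W_j\) in \(H^k_{\mathrm{loc}}\). You need it for the almost-everywhere convergence. You also need it, in the stronger form \(\sup_R\limsup_j\|\psi_RW_j\|_{\dot H^k}<\infty\), to get \(\varphi\in\dot H^k\). Neither of your two routes supplies it.

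In the commutator route, Hölder with \(\|\nabla^t\psi_R\|_{L^{N/t}}\) controls \(\nabla^t\psi_R\,\nabla^{k-t}W_j\) by the \(L^q\)-norm only when \(t=k\). For \(0<t<k\) it requires \(\nabla^{k-t}W_j\in L^p\) with \(1/p=1/2-t/N\), which is again local energy. Expanding \(\|\psi_RW_j\|_{\dot H^k}^2\) through \(\langle W_j,\psi_R^2W_j\rangle\) also reintroduces the energy on a larger ball, so an absorption or iteration argument would be needed, and none is given. In the spectral route, \(X_k(\Hh^n)\) does not control \(L^2\) near \(\rho=0\). So local \(H^k\)-control of the \(\Pi_0\)-component is not a bounded-geometry triviality: it would need Lemma~\ref{lem:low-frequency} plus elliptic regularity, and then a transfer to the shrinking rescaled balls.

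The missing idea is that you never have to localize \(\|\cdot\|_{\Ecal}\); pair \(w_j\) with \(B_j\chi\) instead. Take \(\chi\in C_c^\infty(K')\) and \(j\) so large that the shifted axis \(\{Y=-\mu_j\eta_j\}\) is far from \(K'\). Then \(B_j\chi\) is supported away from \(\mathcal A\), and \(\|B_j\chi\|_{\Ecal}\le\|\chi\|_{\dot H^k}\). Scale invariance, via \((\gamma+2k)q'=N\), then gives
\[
 \bigl|\langle W_j,\chi\rangle_{\dot H^k}\bigr|
 =|L_{j,\chi}(w_j)|
 \le
 \|w_j\|_{\Ecal}\|\chi\|_{\dot H^k}
 +\Lambda_0\|w_j\|_q
 \bigl\|\chi\,|Y+\mu_j\eta_j|^{-2k}\bigr\|_{L^{q'}},
\]
and the last factor tends to zero. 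Hence \(\limsup_j|L_{j,\chi}(w_j)|\le C\|\chi\|_{\dot H^k}\) with \(C\) independent of \(\chi\), and Riesz produces \(\varphi\in\dot H^k\). The weak \(L^q\)-limit \(V\) of \(W_j\) equals \(\varphi\): \(V-\varphi\in L^q\) is a tempered polyharmonic distribution, hence a polynomial, hence zero.

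For fixed \(K\Subset K'\), the same inequality bounds \((-\Delta)^kW_j\) in \(H^{-k}(K')\), while \(\|W_j\|_{L^2(K')}\le C\|w_j\|_q\). The interior elliptic estimate then gives \(\sup_j\|W_j\|_{H^k(K)}<\infty\), and Rellich with a diagonal extraction yields the almost-everywhere convergence. This is the paper's proof. It reduces the step you regarded as the main obstacle to duality plus interior regularity.
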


\begin{proof}
Put \(c_j=\mu_j\eta_j\) and \(q'=q/(q-1)\).  For every fixed compactly supported
\(\chi\),
\begin{align}
 \|B_j\chi\|_{\Ecal}^2
 &=
 \|\chi\|_{\dot H^k}^2
 -
 \Lambda_0
 \int_{\R^N}
 \frac{|\chi(X,Y)|^2}{|Y+c_j|^{2k}}\,\dd X\,\dd Y
 \longrightarrow
 \|\chi\|_{\dot H^k}^2,
 \label{eq:packet-energy-limit}\\
 \left\|\frac{B_j\chi}{|y|^{2k}}\right\|_{q'}
 &=
 \left\|
 \frac{\chi(X,Y)}{|Y+c_j|^{2k}}
 \right\|_{q'}
 \longrightarrow0.
 \label{eq:packet-potential-limit}
\end{align}
The threshold pairing, Hölder's inequality, and the continuous embedding
\(\Ecal\hookrightarrow L^q\) therefore give
\[
 \limsup_j|L_{j,\chi}(w_j)|
 \le C\|\chi\|_{\dot H^k}.
\]
A diagonal subsequence argument on a countable dense subset of
\(\dot H^k\) yields a bounded linear functional on \(\dot H^k\).  The Riesz
theorem gives
\(\varphi\in\dot H^k\).  The hypothesis involving $\chi_0$ implies that
\(\varphi\ne0\).

Set \(V_j=B_j^{-1}w_j\).  The \(L^q\)-norm is invariant, so, after
extraction, \(V_j\rightharpoonup V\) in \(L^q\).  For compactly
supported \(\chi\),
\[
 L_{j,\chi}(w_j)
 =
 \int_{\R^N}V_j(-\Delta)^k\chi.
\]
It follows that
\[
 (-\Delta)^k(V-\varphi)=0
\]
in distributions.  The homogeneous Sobolev inequality puts
\(\varphi\) in \(L^q\).  The difference \(V-\varphi\) is a tempered
polyharmonic distribution.  Its Fourier transform is supported at the
origin, so it is a polynomial; membership in \(L^q(\R^N)\) excludes every
nonzero polynomial.  Thus
\[
 V=\varphi\in L^q.
\]

Let \(K\Subset K'\Subset\R^N\), and write
\(H^{-k}(K')=(H_0^k(K'))^*\).  For large \(j\), the shifted
singular plane \(Y=-c_j\) lies outside \(K'\).  If
\(\chi\in H_0^k(K')\), density and the functional bound above give
\[
 \bigl|\langle(-\Delta)^kV_j,\chi\rangle\bigr|
 =|L_{j,\chi}(w_j)|
 \le C\|\chi\|_{H^k(K')}.
\]
Thus \((-\Delta)^kV_j\) is uniformly bounded in \(H^{-k}(K')\);
the global \(L^q\)-bound gives a local \(L^2\)-bound.  The interior
elliptic estimate yields
\[
 \|V_j\|_{H^k(K)}
 \le
 C_K\left(
  \|(-\Delta)^kV_j\|_{H^{-k}(K')}
  +\|V_j\|_{L^2(K')}
 \right).
\]
Rellich compactness and a diagonal argument give local strong convergence
below the critical exponent and hence local almost-everywhere convergence.
\end{proof}

\subsection{Profiles relative to the axis-preserving group}

Two parameter families in \(G_{\mathcal A}\),
\(\mathcal T_j^{(\iota)}
=\mathcal T_{b_j^{(\iota)},\sigma_j^{(\iota)}}\) and
\(\mathcal T_j^{(\jmath)}
=\mathcal T_{b_j^{(\jmath)},\sigma_j^{(\jmath)}}\) are called orthogonal if
\begin{equation}\label{eq:parameter-orthogonality}
 \left|
 \log\frac{\sigma_j^{(\iota)}}{\sigma_j^{(\jmath)}}
 \right|
 +
 \sigma_j^{(\iota)}
 |b_j^{(\iota)}-b_j^{(\jmath)}|
 \longrightarrow\infty.
\end{equation}
Indeed,
\[
 (\mathcal T_j^{(\iota)})^{-1}\mathcal T_j^{(\jmath)}
 =
 \mathcal T_{
  \sigma_j^{(\iota)}(b_j^{(\jmath)}-b_j^{(\iota)}),
  \,\sigma_j^{(\jmath)}/\sigma_j^{(\iota)}
 }.
\]
If \eqref{eq:parameter-orthogonality} fails, the relative
transformations have a strongly convergent subsequence; if it holds, they
converge to zero in the weak operator topology on \(\Ecal\).

For a bounded sequence \(r=(r_j)\subset\Ecal\), define its maximal weak
defect relative to \(G_{\mathcal A}\) by
\begin{equation}\label{eq:maximal-axis-profile-defect}
 \mathfrak a_{\mathcal A}(r)
 =
 \sup\left\{
  \|\psi\|_{\Ecal}:
  \begin{array}{l}
   \text{there exist \(j_s\to\infty\) and
   \(T_s\in G_{\mathcal A}\) such that}\\
   T_s^{-1}r_{j_s}\rightharpoonup\psi
   \text{ in }\Ecal
  \end{array}
 \right\}.
\end{equation}

\begin{theorem}[Profile decomposition relative to the singular axis]
\label{thm:cylindrical-profile}
Assume \(\Sth>0\).  Let \(w_j\) be bounded in \(\Ecal\) and suppose that
it has no Euclidean profile escaping \(\mathcal A\).  After passing to a
subsequence, there are
\[
 w^{(0)},\psi^{(1)},\psi^{(2)},\ldots\in\Ecal,
\]
pairwise orthogonal parameter families in \(G_{\mathcal A}\),
\[
 \mathcal T_j^{(\nu)}
 =
 \mathcal T_{b_j^{(\nu)},\sigma_j^{(\nu)}},
\]
and remainders \(r_j^{(J)}\) such that
\[
 w_j
 =
 w^{(0)}
 +\sum_{\nu=1}^J
 \mathcal T_j^{(\nu)}\psi^{(\nu)}
 +r_j^{(J)}.
\]
For each fixed \(J\),
\begin{align}
 \|w_j\|_{\Ecal}^2
 &=
 \|w^{(0)}\|_{\Ecal}^2
 +\sum_{\nu=1}^J\|\psi^{(\nu)}\|_{\Ecal}^2
 +\|r_j^{(J)}\|_{\Ecal}^2
 +o(1),
 \label{eq:energy-profile-split}\\
 \|w_j\|_q^q
 &=
 \|w^{(0)}\|_q^q
 +\sum_{\nu=1}^J\|\psi^{(\nu)}\|_q^q
 +\|r_j^{(J)}\|_q^q
 +o(1).
 \label{eq:lq-profile-split}
\end{align}
For every fixed \(J\), the remainder \(r_j^{(J)}\) has no Euclidean profile
escaping \(\mathcal A\), and
\begin{equation}\label{eq:maximal-profile-defect-vanishing}
 \mathfrak a_{\mathcal A}(r^{(J)})\longrightarrow0
 \qquad(J\to\infty).
\end{equation}
Finally,
\begin{equation}\label{eq:profile-remainder}
 \lim_{J\to\infty}\limsup_{j\to\infty}
 \|r_j^{(J)}\|_q=0.
\end{equation}
\end{theorem}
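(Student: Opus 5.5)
The plan is the standard iterative (Solimini/Gérard-type) extraction in the Hilbert space \(\Ecal\), run relative to the group \(G_{\mathcal A}\). The final \(L^q\)-vanishing of the remainder comes from Theorem~\ref{thm:modewise-cocompactness}, transported through \(\mathscr C\).

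First, after a subsequence, let \(w_j\rightharpoonup w^{(0)}\) in \(\Ecal\) and set \(r_j^{(0)}=w_j-w^{(0)}\). Inductively, given \(r^{(J-1)}\):
\begin{itemize}
\item If \(\mathfrak a_{\mathcal A}(r^{(J-1)})=0\), stop and set all further profiles to zero.
\item Otherwise choose \(T_j^{(J)}=\mathcal T_{b_j^{(J)},\sigma_j^{(J)}}\) and \(\psi^{(J)}\) with \((T_j^{(J)})^{-1}r_j^{(J-1)}\rightharpoonup\psi^{(J)}\) and \(\|\psi^{(J)}\|_{\Ecal}\ge\frac12\mathfrak a_{\mathcal A}(r^{(J-1)})\), after a diagonal subsequence.
\item Set \(r_j^{(J)}=r_j^{(J-1)}-T_j^{(J)}\psi^{(J)}\).
\end{itemize}

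Since each \(T\in G_{\mathcal A}\) is unitary on \(\Ecal\), weak convergence gives the Pythagorean identity
\[
\|r_j^{(J-1)}\|_{\Ecal}^2=\|\psi^{(J)}\|_{\Ecal}^2+\|r_j^{(J)}\|_{\Ecal}^2+o(1).
\]
Summing these identities yields \eqref{eq:energy-profile-split}. It also gives \(\sum_\nu\|\psi^{(\nu)}\|_{\Ecal}^2\le\sup_j\|w_j\|_{\Ecal}^2\), so \(\|\psi^{(J)}\|\to0\) and hence \eqref{eq:maximal-profile-defect-vanishing}.

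Orthogonality of the parameter families is proved by the usual argument. If \eqref{eq:parameter-orthogonality} failed for some \(\iota<\jmath\), the relative transformation would converge strongly to some \(T_\ast\), after extraction. Then \((T_j^{(\jmath)})^{-1}r_j^{(\jmath-1)}\) would be, up to \(o(1)\) in norm, \(T_\ast^{-1}\) applied to \((T_j^{(\iota)})^{-1}r_j^{(\jmath-1)}\). The latter converges weakly to \(0\), because \((T_j^{(\iota)})^{-1}r_j^{(\iota)}\rightharpoonup0\) and the intermediate profiles are carried to \(0\) by the orthogonality already proved. This would force \(\psi^{(\jmath)}=0\), a contradiction.

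The same weak-operator-topology vanishing yields \((T_j^{(\nu)})^{-1}r_j^{(J)}\rightharpoonup0\) for \(\nu\le J\).

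For \eqref{eq:lq-profile-split}, I would use the Brézis--Lieb lemma iteratively. The needed ingredient is a.e. convergence \((T_j^{(\nu)})^{-1}w_j\to\psi^{(\nu)}\) after extraction, plus the asymptotic disjointness of orthogonal profiles in \(L^q\). For the a.e. convergence, note that \(\mathscr C\) transfers bounded sequences in \(\Ecal\) to \(\Hcal\). On the coercive part \(\Pi_\perp\) there is an \(H^k_{\mathrm{loc}}\) bound, so Rellich applies. On the \(\Pi_0\) part, the band estimates of Proposition~\ref{prop:bottom-cocompactness} give local compactness in \(L^2_{\mathrm{loc}}\). Alternatively, local \(H^k\) estimates away from \(\mathcal A\) suffice, since the weight is bounded on compacts of \(\R^N\setminus\mathcal A\) and \(\mathcal A\) is null.

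The no-escaping-profile property passes to each \(r^{(J)}\) because of the following. For an escaping family \(B_j\) and a fixed \(\chi\), one has \(L_{j,\chi}(\mathcal T_j^{(\nu)}\psi)\to0\). This holds because \(B_j\chi\rightharpoonup0\) in \(\Ecal\) relative to any fixed axis profile: the rescaled axis recedes, so by \eqref{eq:packet-energy-limit}--\eqref{eq:packet-potential-limit} one reduces to the pairing \(\langle(\mathcal T_j^{(\nu)})^{-1}B_j\chi,\psi\rangle\). That pairing vanishes, because \((\mathcal T_j^{(\nu)})^{-1}B_j\chi\) is a bump concentrating at scale ratio or distance tending to infinity relative to the axis, hence weakly null in \(\Ecal\). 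The weak limit \(w^{(0)}\) is handled the same way.

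Finally, for \eqref{eq:profile-remainder}, argue by contradiction. Suppose there are \(\epsilon>0\), and \(J_s\to\infty\) and \(j_s\to\infty\), with \(\|r_{j_s}^{(J_s)}\|_q\ge\epsilon\). The diagonal sequence \(u_s=\mathscr C r_{j_s}^{(J_s)}\) is bounded in \(\Hcal\). It has no escaping profile; the uniformity of this in \(J\) has to be checked, and this is the step I expect to be the main obstacle, since the escaping-packet condition is only asserted for fixed \(J\). Any weak \(G_{\mathcal A}\)-limit of the sequence has norm at most \(\mathfrak a_{\mathcal A}(r^{(J_s)})\to0\), so \(u_s\circ g_{b_s,\sigma_s}\rightharpoonup0\) for every parameter sequence. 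Theorem~\ref{thm:modewise-cocompactness} then gives \(u_s\to0\) in \(L^q(M)\), a contradiction.

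To handle the uniformity, I would first try to show that \(L_{j,\chi}\) applied to the diagonal remainder is asymptotically \(L_{j,\chi}(w_j)\) minus finitely many profile terms plus a tail. The tail is controlled by \(\bigl\|\sum_{\nu>J}\mathcal T_j^{(\nu)}\psi^{(\nu)}\bigr\|_{\Ecal}\), which is small by the energy summability. The bound then follows from the functional estimate \(|L_{j,\chi}(w)|\le C\|\chi\|_{\dot H^k}\|w\|_{\Ecal}\), derived as in Lemma~\ref{lem:packet-to-profile}.
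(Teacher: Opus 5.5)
Your proposal is correct and follows the paper's proof essentially step for step. The shared steps are half-maximal extraction relative to $G_{\mathcal A}$, the Pythagorean energy split, orthogonality via strongly convergent relative transformations, the one-step Br\'ezis--Lieb identity, vanishing of the pairing between transported axis profiles and escaping packets, and a diagonal contradiction argument closed by Theorem~\ref{thm:modewise-cocompactness}, with the tail controlled by energy summability and the uniform bound on $L_{j,\chi}$. The one point to make explicit, as the paper does in \eqref{eq:uniform-profile-defect}--\eqref{eq:slow-diagonal-tests}, is that $j_s$ must be chosen large relative to $J_s$, so that the cross terms among the first $J_s$ profiles and the pairings $\sup_{T}|\langle T^{-1}z_s,\Phi_\ell\rangle_{\Ecal}|$ for $\ell\le s$ are within $s^{-1}$ of their limits; otherwise neither the boundedness of the diagonal sequence nor your claim that its $G_{\mathcal A}$-weak limits are controlled by $\mathfrak a_{\mathcal A}(r^{(J_s)})$ follows from the fixed-$J$ definition of $\mathfrak a_{\mathcal A}$.
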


\begin{proof}
After passing to a subsequence, let \(w^{(0)}\) be the weak limit of \(w_j\),
and set \(r_j^{(0)}=w_j-w^{(0)}\).  If
\(\mathfrak a_{\mathcal A}(r^{(J)})=0\) at a finite stage, then
\[
 T_j^{-1}r_j^{(J)}\rightharpoonup0
 \quad\text{in }\Ecal
\]
for every sequence \(T_j\in G_{\mathcal A}\).  The subtraction of finitely
many transported profiles preserves the absence of Euclidean profiles
escaping \(\mathcal A\), as follows from
\eqref{eq:axis-profile-packet-incompatibility} and the density argument
following it.  Hence
Theorem~\ref{thm:modewise-cocompactness}, applied after the conformal
transfer, gives
\[
 r_j^{(J)}\longrightarrow0
 \quad\text{in }L^q(\R^N),
\]
and the extraction terminates.  Otherwise choose the next profile so that
\[
 \|\psi^{(J+1)}\|_{\Ecal}
 \ge\frac12\mathfrak a_{\mathcal A}(r^{(J)}),
\]
where, for a suitable
\(T_j^{(J+1)}
=\mathcal T_{b_j^{(J+1)},\sigma_j^{(J+1)}}\),
\[
 (T_j^{(J+1)})^{-1}r_j^{(J)}
 \rightharpoonup\psi^{(J+1)}
 \quad\text{in }\Ecal.
\]
Define
\begin{equation}\label{eq:profile-remainder-recursion}
 r_j^{(J+1)}
 =
 r_j^{(J)}-T_j^{(J+1)}\psi^{(J+1)}.
\end{equation}
A later parameter family must be orthogonal to every earlier one.  If not,
the relative transformations would have a strongly convergent subsequence,
and composition with its limit would produce a nonzero weak limit of an
earlier remainder in the earlier coordinate system, contrary to the weak
convergence that follows from
\eqref{eq:profile-remainder-recursion}.

The one-step Hilbert identity gives, for fixed \(J\),
\[
 \|w_j\|_{\Ecal}^2
 =
 \|w^{(0)}\|_{\Ecal}^2
 +\sum_{\nu=1}^J\|\psi^{(\nu)}\|_{\Ecal}^2
 +\|r_j^{(J)}\|_{\Ecal}^2
 +o(1).
\]
In particular,
\[
 \sum_{\nu=1}^\infty\|\psi^{(\nu)}\|_{\Ecal}^2<\infty,
 \qquad
 \mathfrak a_{\mathcal A}(r^{(J)})\longrightarrow0.
\]
Local coercivity away from the singular plane and local compactness give,
after a diagonal extraction, almost-everywhere convergence in each extracted
coordinate.  In particular,
\[
 (T_j^{(J+1)})^{-1}r_j^{(J)}\longrightarrow\psi^{(J+1)}
 \quad\text{a.e. on }\R^N,
\]
and \eqref{eq:profile-remainder-recursion} gives
\[
 (T_j^{(J+1)})^{-1}r_j^{(J+1)}\longrightarrow0
 \quad\text{a.e. on }\R^N.
\]
The Brézis--Lieb lemma \cite[Theorem~1]{BrezisLieb1983}, together with the
\(L^q\)-isometry of \(T_j^{(J+1)}\), therefore yields the one-step identity
\[
 \|r_j^{(J)}\|_q^q
 =
 \|\psi^{(J+1)}\|_q^q
 +\|r_j^{(J+1)}\|_q^q
 +o(1).
\]
Applying the same argument first to the ordinary weak limit \(w^{(0)}\) and
then iterating this identity proves \eqref{eq:lq-profile-split}.

To prove \eqref{eq:profile-remainder}, observe first that every transported
\(G_{\mathcal A}\)-profile has asymptotically vanishing
pairing with each Euclidean concentration escaping \(\mathcal A\).  For
\(\psi\in C_c^\infty(\R^N\setminus\mathcal A)\),
\[
 B_j^{-1}\mathcal T_{b_j,\sigma_j}\psi(X,Y)
 =
 \left(\frac{\sigma_j}{\mu_j}\right)^\gamma
 \psi\left(
  \sigma_j(\xi_j-b_j)+\frac{\sigma_j}{\mu_j}X,
  \sigma_j\eta_j+\frac{\sigma_j}{\mu_j}Y
 \right).
\]
A critical dilation or translation of a fixed compactly supported smooth
function converges weakly to zero in \(\dot H^k_{\mathrm{loc}}\) whenever
its scale tends to zero or infinity, or whenever its translated center
leaves every compact set.  Applied to the preceding formula, this gives the
necessary implication
\begin{equation}\label{eq:axis-profile-packet-incompatibility}
\begin{aligned}
 B_j^{-1}\mathcal T_{b_j,\sigma_j}\psi
 \not\rightharpoonup0
 \quad\Longrightarrow\quad&
 \frac{\sigma_j}{\mu_j}\longrightarrow\tau\in(0,\infty),\\
 &
 \sigma_j(\xi_j-b_j)=O(1),
 \qquad
 \sigma_j\eta_j=O(1).
\end{aligned}
\end{equation}
The last condition and the convergence of the scale ratio imply
\[
 \mu_j|\eta_j|
 =
 \frac{|\sigma_j\eta_j|}{\sigma_j/\mu_j}
 =
 O(1),
\]
contrary to \eqref{eq:axis-escaping-packet}.  Hence subtraction of any fixed
finite family of \(G_{\mathcal A}\)-profiles preserves
\eqref{eq:no-axis-escaping-profile}.  By density, the same conclusion holds
for every \(\psi\in\Ecal\), because the functionals \(L_{j,\chi}\) are
uniformly bounded on \(\Ecal\) by
\eqref{eq:packet-energy-limit}--\eqref{eq:packet-potential-limit}.

Suppose now that \eqref{eq:profile-remainder} fails.  Then for some
\(\delta>0\) one can choose \(J_s\uparrow\infty\) and
\(j_s\uparrow\infty\) as follows.  Let
\(\{\Phi_\ell\}_{\ell\ge1}\) be dense in the unit ball of \(\Ecal\).
For every fixed \(J\) and \(\ell\), the definition of
\(\mathfrak a_{\mathcal A}\) gives
\begin{equation}\label{eq:uniform-profile-defect}
 \limsup_{j\to\infty}\sup_{T\in G_{\mathcal A}}
 \bigl|\langle T^{-1}r_j^{(J)},\Phi_\ell\rangle_{\Ecal}\bigr|
 \le \mathfrak a_{\mathcal A}(r^{(J)}).
\end{equation}
If \eqref{eq:uniform-profile-defect} failed, one could choose transformations
whose corresponding weak limit has norm greater than
\(\mathfrak a_{\mathcal A}(r^{(J)})\), contradicting its definition.  A
diagonal choice of \(J_s\uparrow\infty\) and \(j_s\uparrow\infty\) then
ensures that
\[
 z_s:=r_{j_s}^{(J_s)},
 \qquad
 \|z_s\|_q\ge\delta/2,
\]
and
\begin{equation}\label{eq:slow-diagonal-tests}
 \sup_{\substack{T\in G_{\mathcal A}\\1\le\ell\le s}}
 \bigl|\langle T^{-1}z_s,\Phi_\ell\rangle_{\Ecal}\bigr|
 \le \mathfrak a_{\mathcal A}(r^{(J_s)})+s^{-1}.
\end{equation}
The same choice makes the total energy cross-term error among the first
\(J_s\) profiles at most \(s^{-1}\).  Since
\(\mathfrak a_{\mathcal A}(r^{(J_s)})\to0\), density in the unit ball and
\eqref{eq:slow-diagonal-tests} show that \(z_s\) is weakly zero after every
sequence of transformations in \(G_{\mathcal A}\).  The energy identity
and the chosen
cross-term bound also give
\(\sup_s\|z_s\|_{\Ecal}<\infty\).

The sequence \(z_s\) also has no Euclidean profile escaping
\(\mathcal A\).  Fix \(L\).
Orthogonality and the choice of diagonal indices give
\[
 \left\|
  \sum_{\nu=L+1}^{J_s}
  \mathcal T_{j_s}^{(\nu)}\psi^{(\nu)}
 \right\|_{\Ecal}^2
 \le
 2\sum_{\nu>L}\|\psi^{(\nu)}\|_{\Ecal}^2+s^{-1}.
\]
Equations \eqref{eq:packet-energy-limit}--\eqref{eq:packet-potential-limit}
bound the pairing of this tail with each fixed test function \(\chi\) by
\[
 C_\chi
 \left(2\sum_{\nu>L}\|\psi^{(\nu)}\|_{\Ecal}^2\right)^{1/2}.
\]
Letting \(L\to\infty\) shows that
\(z_s\) has no Euclidean profile escaping \(\mathcal A\).  Applying
Theorem~\ref{thm:modewise-cocompactness} after the conformal transfer
gives \(\|z_s\|_q\to0\), a contradiction.
\end{proof}

\section{Compactness under the strict Euclidean inequality}

\begin{proposition}[Exclusion of Euclidean concentrations far from the singular axis]
\label{prop:packet-exclusion}
Assume
\[
 \Sth<S_{N,k}.
\]
Let \(w_j\in\Ecal\) be a normalized minimizing sequence:
\[
 \|w_j\|_q=1,
 \qquad
 \|w_j\|_{\Ecal}^2\longrightarrow\Sth.
\]
Then \(w_j\) has no Euclidean profile escaping \(\mathcal A\).
\end{proposition}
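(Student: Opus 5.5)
Assume the conclusion fails: there are a family \(B_j=\mathcal B_{(\xi_j,\eta_j),\mu_j}\) with \(\mu_j|\eta_j|\to\infty\) and some \(\chi_0\) with \(\limsup_j|L_{j,\chi_0}(w_j)|>0\). We argue by contradiction with the strict inequality \(\Sth<S_{N,k}\), in the spirit of a Brézis--Lieb/strict-concavity splitting.

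\textbf{Step 1: extract the escaping profile.} Lemma~\ref{lem:packet-to-profile} applies, since \(w_j\) is bounded in \(\Ecal\) and \(\|w_j\|_q=1\). After passing to a subsequence it gives \(0\ne\varphi\in\dot H^k\cap L^q\) with
\[
 L_{j,\chi}(w_j)\to\langle\varphi,\chi\rangle_{\dot H^k}
 \qquad\text{for every }\chi,
\]
and \(V_j:=B_j^{-1}w_j\to\varphi\) almost everywhere.

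\textbf{Step 2: split the \(L^q\) mass.} Brézis--Lieb, applied in the rescaled variables and using the \(L^q\)-isometry of \(B_j\), gives
\[
 1=\|\varphi\|_q^q+\|w_j-B_j\varphi\|_q^q+o(1).
\]

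\textbf{Step 3: split the energy.} This is the main obstacle, because \(V_j\) need not be bounded in \(\dot H^k\) globally. I would split in \(\Ecal\) instead. Approximate \(\varphi\) in \(\dot H^k\) by \(\chi\in C_c^\infty\). By \eqref{eq:packet-energy-limit}, \(B_j\chi\) is admissible, with \(\|B_j\chi\|_{\Ecal}^2\to\|\chi\|_{\dot H^k}^2\). The Hardy term of \(B_j\chi\) tends to zero by \eqref{eq:packet-potential-limit}, so
\[
 \langle w_j,B_j\chi\rangle_{\Ecal}=L_{j,\chi}(w_j)+o(1)\to\langle\varphi,\chi\rangle_{\dot H^k}.
\]
Expanding then gives
\[
 \|w_j-B_j\chi\|_{\Ecal}^2=\|w_j\|_{\Ecal}^2-2\langle\varphi,\chi\rangle_{\dot H^k}+\|\chi\|_{\dot H^k}^2+o(1).
\]
Letting \(\chi\to\varphi\) with a diagonal choice \(\chi=\chi_j\), and using the uniform bounds on \(L_{j,\cdot}\), yields
\[
 \|w_j\|_{\Ecal}^2=\|\varphi\|_{\dot H^k}^2+\|w_j-B_j\chi_j\|_{\Ecal}^2+o(1),
\]
together with \(\|w_j-B_j\chi_j\|_q^q=\|w_j-B_j\varphi\|_q^q+o(1)\). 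Care is needed to make \(\chi_j\) supported away from the shifted axis \(Y=-\mu_j\eta_j\). This is possible because that axis recedes to infinity, so the cut-off versions of \(\chi_j\) still lie in \(C_c^\infty(\R^N\setminus\mathcal A)\) after applying \(B_j\).

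\textbf{Step 4: compare constants.} Put \(a=\|\varphi\|_q^q\in(0,1]\) and \(b=\lim\|w_j-B_j\chi_j\|_q^q=1-a\). Use the Euclidean inequality \(\|\varphi\|_{\dot H^k}^2\ge S_{N,k}a^{2/q}\) for the profile, and the definition of \(\Sth\) on the remainder, \(\|w_j-B_j\chi_j\|_{\Ecal}^2\ge\Sth\|w_j-B_j\chi_j\|_q^2\). Together these give
\[
 \Sth\ge S_{N,k}a^{2/q}+\Sth(1-a)^{2/q}>\Sth\bigl(a^{2/q}+(1-a)^{2/q}\bigr)\ge\Sth.
\]
The strict inequality holds because \(a>0\) and \(S_{N,k}>\Sth\); the last inequality holds because \(t^{2/q}\) is concave and \(2/q<1\). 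This contradiction shows that \(w_j\) has no Euclidean profile escaping \(\mathcal A\).
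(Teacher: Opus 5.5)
Your proposal is correct and is essentially the paper's argument. You extract \(\varphi\) via Lemma~\ref{lem:packet-to-profile} and subtract a transported compactly supported approximant \(B_j\chi\), which is admissible because the shifted axis recedes. You then split the energy with \eqref{eq:packet-energy-limit}--\eqref{eq:packet-potential-limit}, split the \(L^q\) mass with Brézis--Lieb, and contradict \(\Sth<S_{N,k}\) through the concavity of \(t^{2/q}\). The only difference is cosmetic: the paper fixes one approximant \(\psi\), close enough to \(\varphi\) in \(\dot H^k\cap L^q\) that \(D_\psi+\Sth(1-c_\psi)^{2/q}>\Sth\) persists, whereas you let \(\chi_j\to\varphi\) diagonally to obtain exact splitting in the limit.
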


\begin{proof}
Suppose, to the contrary, that \(w_j\) has a Euclidean profile escaping
\(\mathcal A\).  By
Lemma~\ref{lem:packet-to-profile}, after extraction there is
\[
 0\ne\varphi\in\dot H^k(\R^N)\cap L^q(\R^N)
\]
which is the local almost-everywhere limit in the coordinates obtained by
applying \(B_j^{-1}\).  Set
\[
 a=\|\varphi\|_q^q\in(0,1],
 \qquad
 \beta_q=\frac2q\in(0,1).
\]
The Euclidean Sobolev inequality and the strict gap give
\[
 \|\varphi\|_{\dot H^k}^2
 +\Sth(1-a)^{\beta_q}
 \ge
 S_{N,k}a^{\beta_q}+\Sth(1-a)^{\beta_q}
 >
 \Sth.
\]
Choose \(\psi\in C_c^\infty(\R^N)\) close to \(\varphi\) in
\(\dot H^k\cap L^q\), and define
\[
 D_\psi
 =
 2\langle\varphi,\psi\rangle_{\dot H^k}
 -\|\psi\|_{\dot H^k}^2,
 \qquad
 c_\psi
 =
 \|\varphi\|_q^q-\|\varphi-\psi\|_q^q.
\]
The approximation may be chosen so that
\[
 0<c_\psi<1,
 \qquad
 D_\psi+\Sth(1-c_\psi)^{\beta_q}>\Sth.
\]

Let \(p_j=B_j\psi\) and \(r_j=w_j-p_j\).  Since
\(\mu_j|\eta_j|\to\infty\), the function \(p_j\) is eventually
supported away from the singular plane; hence \(p_j,r_j\in\Ecal\).
The limits \eqref{eq:packet-energy-limit}--\eqref{eq:packet-potential-limit}
give
\[
 \|w_j\|_{\Ecal}^2
 =
 \|r_j\|_{\Ecal}^2+D_\psi+o(1).
\]
Local almost-everywhere convergence and the Brézis--Lieb lemma give
\[
 \|r_j\|_q^q=1-c_\psi+o(1).
\]
Applying the threshold inequality
\(\|r_j\|_{\Ecal}^2\ge\Sth\|r_j\|_q^2\) contradicts the preceding
strict inequality.  Thus no Euclidean profile can escape
\(\mathcal A\).
\end{proof}

\begin{proposition}[Compactness modulo the axis-preserving group]
\label{prop:compactness-modulo-group}
Under the hypotheses of Theorem~\ref{thm:main}, let \(w_j\in\Ecal\) satisfy
\[
 \|w_j\|_q=1,
 \qquad
 \|w_j\|_{\Ecal}^2\longrightarrow\Sth.
\]
Then there exist a subsequence, transformations
\(\mathcal T_j\), and a nonzero $w\in\Ecal$ such that
\[
 \mathcal T_j^{-1}w_j
 \rightharpoonup w
 \quad\text{in }\Ecal.
\]
Equivalently, after the corresponding product-space recentering, the
minimizing sequence has a nonzero weak limit in \(\Hcal\).
\end{proposition}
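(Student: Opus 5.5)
The plan is to combine Proposition~\ref{prop:packet-exclusion} with the profile decomposition of Theorem~\ref{thm:cylindrical-profile}. The weak limit of the recentered sequence is then read off from the profile of largest mass.

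First, the hypotheses of Theorem~\ref{thm:main} give \(\Sth<S_{N,k}\) and, by Proposition~\ref{prop:positive-threshold}, \(\Sth>0\). The sequence \(w_j\) is bounded in \(\Ecal\), since \(\|w_j\|_{\Ecal}^2\to\Sth\). Proposition~\ref{prop:packet-exclusion} shows that \(w_j\) has no Euclidean profile escaping \(\mathcal A\). Hence Theorem~\ref{thm:cylindrical-profile} applies. After a subsequence we obtain the decomposition
\[
 w_j=w^{(0)}+\sum_{\nu=1}^J\mathcal T_j^{(\nu)}\psi^{(\nu)}+r_j^{(J)},
\]
together with the energy and \(L^q\) splittings \eqref{eq:energy-profile-split}--\eqref{eq:lq-profile-split} and the remainder property \eqref{eq:profile-remainder}.

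Second, I rule out the possibility that every profile vanishes. Suppose \(w^{(0)}=0\) and \(\psi^{(\nu)}=0\) for all \(\nu\). Then \eqref{eq:lq-profile-split} gives \(\|r_j^{(J)}\|_q^q=1+o(1)\) for every \(J\), which contradicts \eqref{eq:profile-remainder}. So some profile is nonzero. If \(w^{(0)}\ne0\), take \(\mathcal T_j=\mathrm{id}\). Otherwise take \(\mathcal T_j=\mathcal T_j^{(\nu)}\) for some \(\nu\) with \(\psi^{(\nu)}\ne0\).

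It remains to check that \((\mathcal T_j^{(\nu)})^{-1}w_j\rightharpoonup\psi^{(\nu)}\) in \(\Ecal\). This follows from the construction: \((\mathcal T_j^{(\nu)})^{-1}r_j^{(\nu-1)}\rightharpoonup\psi^{(\nu)}\). The earlier terms \((\mathcal T_j^{(\nu)})^{-1}\mathcal T_j^{(\mu)}\psi^{(\mu)}\), for \(\mu<\nu\), converge weakly to zero by pairwise orthogonality \eqref{eq:parameter-orthogonality}. The term \((\mathcal T_j^{(\nu)})^{-1}w^{(0)}\) vanishes here because \(w^{(0)}=0\). The limit \(w=\psi^{(\nu)}\) is therefore nonzero.

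I do not need to show here that all the mass sits in a single profile; that belongs to the attainment step. Still, one could note it as a bonus. Sum the energy split, pass to the limit in \(J\), and use \(\|r_j^{(J)}\|_{\Ecal}^2\ge\Sth\|r_j^{(J)}\|_q^2\), the lower bound \(\|\psi\|_{\Ecal}^2\ge\Sth\|\psi\|_q^2\), and strict concavity of \(t\mapsto t^{2/q}\). This forces exactly one nonzero profile carrying unit mass.

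The final sentence of the proposition follows by transferring through the unitary \(\mathscr C\), using \(\mathscr C(\mathcal T_{b,\sigma}w)=(\mathscr Cw)\circ g_{b,\sigma}\).

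The main obstacle I expect is the weak-convergence bookkeeping in the second step. The argument needs the relative transformations between orthogonal families to tend to zero in the weak operator topology on \(\Ecal\), not merely on \(\dot H^k\). The weight term is invariant under \(G_{\mathcal A}\), and compactly supported functions in \(C_c^\infty(\R^N\setminus\mathcal A)\) are dense in \(\Ecal\). So I would verify the claim on such functions: their critical translates along \(\mathcal A\) and their dilations have asymptotically disjoint supports or degenerate scales. The full claim then follows from the uniform boundedness of the operators.
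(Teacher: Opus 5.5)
Your proposal is correct and follows essentially the same route as the paper. Proposition~\ref{prop:packet-exclusion} puts the minimizing sequence under Theorem~\ref{thm:cylindrical-profile}, the $L^q$ splitting with \eqref{eq:profile-remainder} forces $w^{(0)}$ or some $\psi^{(\nu)}$ to be nonzero, and you then recenter along that family. Your explicit check that $(\mathcal T_j^{(\nu)})^{-1}w_j\rightharpoonup\psi^{(\nu)}$ in $\Ecal$ fills in a step the paper only asserts; it uses weak-operator decay of the relative transformations, verified on $C_c^\infty(\R^N\setminus\mathcal A)$ by support separation together with uniform boundedness.
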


\begin{proof}
By Proposition~\ref{prop:packet-exclusion}, the minimizing sequence satisfies
the hypotheses of Theorem~\ref{thm:cylindrical-profile}.  Taking first
\(j\to\infty\) in
\eqref{eq:lq-profile-split} and then \(J\to\infty\), using
\eqref{eq:profile-remainder}, gives
\[
 1
 =
 \|w^{(0)}\|_q^q
 +\sum_{\nu=1}^{\infty}\|\psi^{(\nu)}\|_q^q.
\]
Thus the ordinary weak limit or one of the \(G_{\mathcal A}\)-profiles has
positive \(L^q\)-mass.  Recentering along the corresponding parameter
family, or doing nothing for \(w^{(0)}\), preserves the energy and
\(L^q\)-norm and produces the asserted nonzero weak limit.
\end{proof}

\section{Attainment and the Euler--Lagrange equation}

\begin{proof}[Proof of Theorem~\ref{thm:main}]
Let \(u_j\in\Hcal\) satisfy
\[
 \|u_j\|_{L^q(M)}=1,
 \qquad
 Q_{\Lambda_0}(u_j)\longrightarrow\Sth.
\]
By
Proposition~\ref{prop:compactness-modulo-group}, after recentering and
passing to a subsequence,
\[
 u_j\rightharpoonup u\ne0
 \quad\text{in }\Hcal.
\]
In the conformal coordinates, the threshold form is locally coercive on
compact sets separated from the Euclidean axis.  The conformal image of
every compact set \(K\Subset M\) is separated from \(\mathcal A\).  Local
Rellich compactness therefore allows us, after passing to a subsequence, to
assume that \(u_j\to u\) almost everywhere locally.  Put \(v_j=u_j-u\).
The Hilbert identity and the
Brézis--Lieb lemma give
\begin{align*}
 Q_{\Lambda_0}(u_j)
 &=
 Q_{\Lambda_0}(u)+Q_{\Lambda_0}(v_j)+o(1),\\
 1
 &=
 \|u\|_q^q+\|v_j\|_q^q+o(1).
\end{align*}
After another subsequence, set
\[
 a=\|u\|_q^q>0,
 \qquad
 b=\lim_j\|v_j\|_q^q.
\]
Then \(a+b=1\).  By the definition of \(\Sth\),
\[
 \Sth
 \ge
 \Sth\bigl(a^{2/q}+b^{2/q}\bigr).
\]
Since \(2/q\in(0,1)\), strict concavity gives
\[
 a^{2/q}+b^{2/q}>1
\]
when \(a,b>0\).  Because \(a>0\), it follows that \(b=0\) and
\(a=1\).  Hence
\[
 \|u\|_q=1.
\]
The variational lower bound gives
\(Q_{\Lambda_0}(u)\ge\Sth\), while the energy splitting and the
nonnegativity of \(Q_{\Lambda_0}(v_j)\) give the reverse inequality.
Therefore \(Q_{\Lambda_0}(u)=\Sth\).

For \(\phi\in\Hcal\), differentiate the quotient at \(u\).  This gives
\[
 Q_{\Lambda_0}(u,\phi)
 =
 \Sth\int_M|u|^{q-2}u\phi\,\dd V.
\]
The right-hand side belongs to \(\Hcal^*\) by
Proposition~\ref{prop:positive-threshold}.  Finally, with
\[
 U=\Sth^{1/(q-2)}u,
\]
we obtain
\[
 Q_{\Lambda_0}(U,\phi)
 =
 \int_M|U|^{q-2}U\phi\,\dd V
\]
for every \(\phi\in\Hcal\).
\end{proof}

\appendix

\section{A scattering computation of the product GJMS operator}
\label{app:scattering}

We derive \eqref{eq:spherical-symbol} directly from scattering on
\(\Hh^{N+1}\).  Using the conformal-boundary realization of
\(\Hh^n\times\Ss^m\), we separate the scattering equation in
Helgason--Fourier variables and spherical harmonics and evaluate the
resulting hypergeometric connection coefficients.  This computation
recovers the Case--Malchiodi factorization and determines both the bottom of
the \(L^2\)-spectrum and the exact lower bound on the orthogonal complement
of the constant spherical functions.

Throughout the appendix,
\[
 a_H=\frac{n-1}{2},
 \qquad
 b_S=\frac{m-1}{2},
 \qquad
 A^2=-\Delta_{\Hh^n}-a_H^2,
\]
and
\[
 D_S
 =
 \sqrt{-\Delta_{\Ss^m}+b_S^2}.
\]
Thus, for a spherical harmonic \(Y_{\ell,a}\) of degree \(\ell\),
\[
 D_SY_{\ell,a}=(\ell+b_S)Y_{\ell,a}.
\]
The symbol \(P_k\) continues to denote the order-\(2k\) GJMS operator.

\subsection{The hyperbolic filling and the scattering normalization}

Use the hyperboloid model of \(\Hh^{N+1}\) in
\(\R^{1,n}\oplus\R^{m+1}\).  If
\[
 h\in\Hh^n,
 \qquad
 \theta\in\Ss^m,
 \qquad
 r\ge0,
\]
then the Fermi-coordinate map around the totally geodesic copy of
\(\Hh^n\) is
\[
 (r,h,\theta)
 \longmapsto
 (\cosh r\,h,\sinh r\,\theta).
\]
Differentiation, together with
\(\langle h,\dd h\rangle=0\) and
\(\langle\theta,\dd\theta\rangle=0\), gives
\begin{equation}\label{eq:app-fermi-metric}
 g_+
 =
 \dd r^2+\cosh^2r\,g_{\Hh^n}
 +\sinh^2r\,g_{\Ss^m}.
\end{equation}
Set
\[
 x=2e^{-r}.
\]
Since
\[
 \cosh r=x^{-1}\left(1+\frac{x^2}{4}\right),
 \qquad
 \sinh r=x^{-1}\left(1-\frac{x^2}{4}\right),
\]
equation \eqref{eq:app-fermi-metric} becomes
\begin{equation}\label{eq:app-pe-metric}
 g_+
 =
 x^{-2}\left[
 \dd x^2+
 \left(1+\frac{x^2}{4}\right)^2g_{\Hh^n}
 +
 \left(1-\frac{x^2}{4}\right)^2g_{\Ss^m}
 \right].
\end{equation}
It follows that \(g_{\Hh^n}+g_{\Ss^m}\) is the boundary representative on
the open chart
\[
 \Ss^N\setminus\Ss^{n-1}
 \cong
 \Hh^n\times\Ss^m.
\]
The compact conformal boundary of the filling is \(\Ss^N\).  A function in
\(C_c^\infty(M)\) is supported away from the omitted
\(\Ss^{n-1}\), so its boundary density extends smoothly by zero across that
set.  We may therefore use the Graham--Zworski residue normalization on
\(\Ss^N\) and then restrict the resulting local conformal operator to this
chart.

Let
\[
 s=\frac N2+\nu.
\]
For \(\operatorname{Re}\nu>0\) and \(\nu\notin\mathbb Z\), with \(\nu\)
initially chosen away from the poles of the resolvent and the Gamma
functions, consider the equation
\begin{equation}\label{eq:app-scattering-equation}
 \left(-\Delta_{g_+}-s(N-s)\right)U=0.
\end{equation}
The two indicial exponents are \(N-s=N/2-\nu\) and
\(s=N/2+\nu\).  With prescribed leading coefficient \(f\), the expansion
has the form
\[
 U
 =
 x^{N/2-\nu}(f+O(x^2))
 +
 x^{N/2+\nu}(h+O(x^2)),
\]
and the scattering operator is
\[
 S\left(\frac N2+\nu\right)f=h.
\]
Graham and Zworski
\cite[Theorem~1]{GrahamZworski2003Scattering} proved that
\begin{equation}\label{eq:app-gz-residue}
 \operatorname*{Res}_{s=N/2+k}S(s)
 =
 \frac{(-1)^{k+1}}
 {2^{2k}k!(k-1)!}\,P_k.
\end{equation}
In the present filling,
\[
 \sigma_{L^2(\Hh^{N+1})}(-\Delta_{g_+})
 =
 \left[\frac{N^2}{4},\infty\right),
\]
whereas
\[
 s(N-s)\big|_{s=N/2+k}
 =
 \frac{N^2}{4}-k^2.
\]
Thus the spectral exclusion required in the residue formula is automatic.
Equivalently, the pole in \(S\) is cancelled by
\(\Gamma(\nu)/\Gamma(-\nu)\), and
\begin{equation}\label{eq:app-normalized-scattering-limit}
 P_k
 =
 \lim_{\nu\to k}
 2^{2\nu}\frac{\Gamma(\nu)}{\Gamma(-\nu)}
 S\left(\frac N2+\nu\right).
\end{equation}

\subsection{Separation of variables}

For the metric \eqref{eq:app-fermi-metric}, the nonnegative Laplacian is
\[
 -\Delta_{g_+}
 =
 -\partial_r^2
 -(n\tanh r+m\coth r)\partial_r
 +\frac{-\Delta_{\Hh^n}}{\cosh^2r}
 +\frac{-\Delta_{\Ss^m}}{\sinh^2r}.
\]
Let
\[
 -\Delta_{\Hh^n}\phi_{\rho,\omega}
 =
 (a_H^2+\rho^2)\phi_{\rho,\omega},
 \qquad \rho\ge0,
\]
in the Helgason--Fourier resolution, and let
\[
 -\Delta_{\Ss^m}Y_{\ell,a}
 =
 \ell(\ell+m-1)Y_{\ell,a}.
\]
For
\[
 U(r,h,\theta)
 =
 R(r)\phi_{\rho,\omega}(h)Y_{\ell,a}(\theta),
\]
equation \eqref{eq:app-scattering-equation} is equivalent to
\begin{equation}\label{eq:app-radial-ode}
\begin{aligned}
 R''
 &+(n\tanh r+m\coth r)R'
 \\
 &+
 \left[
 s(N-s)
 -\frac{a_H^2+\rho^2}{\cosh^2r}
 -\frac{\ell(\ell+m-1)}{\sinh^2r}
 \right]R
 =0.
\end{aligned}
\end{equation}

Put
\[
 z=\tanh^2r,
 \qquad
 \beta=\ell+b_S.
\]
Then
\[
 \partial_r=2\sqrt z(1-z)\partial_z,
\]
and direct substitution in \eqref{eq:app-radial-ode} gives
\begin{equation}\label{eq:app-radial-z}
\begin{aligned}
 z(1-z)R_{zz}
 &+\frac{m+1+(n-3)z}{2}R_z
 \\
 &+
 \left[
 \frac{s(N-s)}{4(1-z)}
 -\frac{a_H^2+\rho^2}{4}
 -\frac{\ell(\ell+m-1)}{4z}
 \right]R
 =0.
\end{aligned}
\end{equation}
Regularity at \(r=0\), where \(z\sim r^2\), selects the behavior
\(R\sim z^{\ell/2}\).  At \(z=1\), the incoming boundary behavior is
\((1-z)^{(N-s)/2}\).  We therefore write
\[
 R(z)
 =
 z^{\ell/2}(1-z)^{(N-s)/2}F(z).
\]
Equation \eqref{eq:app-radial-z} reduces to
\begin{equation}\label{eq:app-hypergeometric-equation}
 z(1-z)F''
 +
 \left[c-(\alpha_++\alpha_-+1)z\right]F'
 -\alpha_+\alpha_-F
 =0,
\end{equation}
where
\begin{equation}\label{eq:app-hypergeometric-parameters}
 c=\beta+1,
 \qquad
 \alpha_\pm
 =
 \frac{\beta+1-\nu\pm i\rho}{2}.
\end{equation}
In particular,
\[
 \alpha_++\alpha_-=\beta+1-\nu,
 \qquad
 \alpha_+\alpha_-
 =
 \frac{(\beta+1-\nu)^2+\rho^2}{4},
 \qquad
 c-\alpha_+-\alpha_-=\nu.
\]
The solution regular at \(r=0\) is consequently
\[
 R(z)
 =
 z^{\ell/2}(1-z)^{(N-s)/2}
 {}_2F_1(\alpha_+,\alpha_-;c;z).
\]

The hypergeometric connection formula at \(z=1\) yields
\begin{align*}
 {}_2F_1(\alpha_+,\alpha_-;c;z)
 &={}
 C_0\,{}_2F_1(\alpha_+,\alpha_-;1-\nu;1-z)
 \\
 &\quad+
 C_1(1-z)^\nu
 {}_2F_1(c-\alpha_+,c-\alpha_-;1+\nu;1-z),
\end{align*}
where
\[
 C_0
 =
 \frac{\Gamma(c)\Gamma(\nu)}
 {\Gamma(c-\alpha_+)\Gamma(c-\alpha_-)},
 \qquad
 C_1
 =
 \frac{\Gamma(c)\Gamma(-\nu)}
 {\Gamma(\alpha_+)\Gamma(\alpha_-)}.
\]
Thus the first summand is \(C_0(1+O(1-z))\), whereas the second is
\(C_1(1-z)^\nu(1+O(1-z))\).  The connection formula holds initially when
the two exponents are distinct and extends meromorphically in \(\nu\).
Since
\[
 1-z
 =
 \operatorname{sech}^2r
 =
 \frac{x^2}{(1+x^2/4)^2},
\]
the two terms have leading powers \(x^{N/2-\nu}\) and
\(x^{N/2+\nu}\), with no additional constant.  Hence
\begin{equation}\label{eq:app-scattering-multiplier}
\begin{aligned}
 S\left(\frac N2+\nu\right)(\rho,\ell)
 &=
 \frac{\Gamma(-\nu)}{\Gamma(\nu)}
 \\
 &\quad\times
 \frac{
 \Gamma\!\left(\frac{\beta+1+\nu+i\rho}{2}\right)
 \Gamma\!\left(\frac{\beta+1+\nu-i\rho}{2}\right)
 }{
 \Gamma\!\left(\frac{\beta+1-\nu+i\rho}{2}\right)
 \Gamma\!\left(\frac{\beta+1-\nu-i\rho}{2}\right)
 }.
\end{aligned}
\end{equation}

\subsection{Integer parameters and the product factorization}

Multiplying \eqref{eq:app-scattering-multiplier} by the normalizing factor
in \eqref{eq:app-normalized-scattering-limit} gives the meromorphic family
\[
 \mathcal P_\nu(\rho,\ell)
 =
 4^\nu
 \frac{
 \Gamma\!\left(\frac{\beta+1+\nu+i\rho}{2}\right)
 \Gamma\!\left(\frac{\beta+1+\nu-i\rho}{2}\right)
 }{
 \Gamma\!\left(\frac{\beta+1-\nu+i\rho}{2}\right)
 \Gamma\!\left(\frac{\beta+1-\nu-i\rho}{2}\right)
 }.
\]
At \(\nu=k\), the identity
\[
 \frac{\Gamma(z+k)}{\Gamma(z)}
 =
 \prod_{i=0}^{k-1}(z+i)
\]
implies
\begin{align}
 \mathcal P_k(\rho,\ell)
 &=
 \prod_{i=0}^{k-1}
 \left[\rho^2+(\beta+1-k+2i)^2\right]
 \notag\\
 &=
 \prod_{j=1}^k
 \left[
 \rho^2+(\ell+b_S+k+1-2j)^2
 \right]
 =
 p_{k,\ell}(\rho).
 \label{eq:app-integer-multiplier}
\end{align}
By the joint functional calculus,
\begin{equation}\label{eq:app-operator-factorization}
 P_k
 =
 \prod_{j=1}^k
 \left[
 A^2+(D_S+k+1-2j)^2
 \right].
\end{equation}
Although \(D_S\) is written as a square root, the full product is local.
Indeed, the nonzero shifts occur in opposite pairs, and
\[
 \left[A^2+(D_S+r)^2\right]
 \left[A^2+(D_S-r)^2\right]
 =
 (A^2+D_S^2+r^2)^2-4r^2D_S^2.
\]
If \(k\) is odd, the remaining central factor is \(A^2+D_S^2\).
Thus \eqref{eq:app-operator-factorization} is a polynomial in \(A^2\) and
\(D_S^2\), hence a differential operator of order \(2k\).  Formula
\eqref{eq:app-integer-multiplier} agrees with
\cite[Corollary~3.3]{CaseMalchiodi2024GJMS}.

\subsection{The \texorpdfstring{\(L^2\)}{L2}-spectrum and the spherical threshold gap}

Set
\[
 a_\ell
 =
 p_{k,\ell}(0)
 =
 \prod_{j=1}^k
 (\ell+b_S+k+1-2j)^2.
\]
For fixed \(\ell\), the polynomial
\[
 x\longmapsto
 \prod_{j=1}^k
 \left[x+(\ell+b_S+k+1-2j)^2\right]
\]
is increasing on \([0,\infty)\), is nonconstant, and tends to infinity.
The spectrum of \(P_k\) on
\(L^2(\Hh^n)\otimes\mathscr H_\ell(\Ss^m)\) is therefore
\([a_\ell,\infty)\).  Consequently,
\begin{equation}\label{eq:app-full-spectrum}
 \sigma_{L^2(M)}(P_k)
 =
 [\Lambda_0,\infty),
 \qquad
 \Lambda_0=\min_{\ell\ge0}a_\ell.
\end{equation}
The endpoint \(\Lambda_0\) is not an \(L^2\)-eigenvalue: in every spherical
sector that attains the minimum, equality requires \(\rho=0\), and the
Helgason--Fourier Plancherel measure has no atom there.

Define
\[
 \pi_k(t)
 =
 \prod_{j=1}^k(t+k+1-2j).
\]
Then
\[
 a_\ell=\pi_k(b_S+\ell)^2,
 \qquad
 \pi_k(t+2)
 =
 \frac{t+k+1}{t-k+1}\pi_k(t).
\]
On each parity class, \(|\pi_k(t)|\) is therefore strictly increasing once
the initial value is nonzero; if the denominator vanishes, the preceding
value is already zero.  Hence
\begin{equation}\label{eq:app-general-spectral-bottom}
 \Lambda_0
 =
 \min_{\varepsilon\in\{0,1\}}
 \pi_k(b_S+\varepsilon)^2
 =
 4^k
 \min_{\varepsilon\in\{0,1\}}
 \left|
 \frac{
 \Gamma\!\left(\frac{m+2k+1+2\varepsilon}{4}\right)
 }{
 \Gamma\!\left(\frac{m-2k+1+2\varepsilon}{4}\right)
 }
 \right|^2.
\end{equation}
At a pole of the denominator, the Gamma quotient in
\eqref{eq:app-general-spectral-bottom} is understood through the polynomial
\(\pi_k\).  The roots of \(\pi_k\) are
\[
 1-k,\,3-k,\,\ldots,\,k-1.
\]
It follows that
\begin{equation}\label{eq:app-zero-bottom}
 \Lambda_0=0
 \quad\Longleftrightarrow\quad
 m\ \text{is odd and}\ k\ge\frac{m+1}{2}.
\end{equation}

\begin{proposition}[The positive spectral bottom and the spherical gap]
\label{prop:scattering-spectrum}
Assume \(\Lambda_0>0\).  Then
\[
 a_0<a_1<a_2<\cdots.
\]
In particular,
\[
 \mathfrak L_0=\{0\},
 \qquad
 \mathscr Y_0
 =
 \mathscr H_0(\Ss^m)
 =
 \operatorname{span}\{1\}.
\]
Moreover,
\begin{equation}\label{eq:app-lambda-zero-explicit}
 \Lambda_0
 =
 4^k
 \left|
 \frac{
 \Gamma\!\left(\frac{m+2k+1}{4}\right)
 }{
 \Gamma\!\left(\frac{m-2k+1}{4}\right)
 }
 \right|^2,
\end{equation}
and the bottom on the orthogonal spherical-harmonic complement is
\begin{equation}\label{eq:app-lambda-one-explicit}
 \Lambda_1
 =
 4^k
 \left|
 \frac{
 \Gamma\!\left(\frac{m+2k+3}{4}\right)
 }{
 \Gamma\!\left(\frac{m-2k+3}{4}\right)
 }
 \right|^2.
\end{equation}
Thus
\[
 \delta_{\Ss}:=\Lambda_1-\Lambda_0>0,
\]
and
\[
 p_{k,\ell}(\rho)-\Lambda_0
 \ge\delta_{\Ss}
 \qquad(\ell\ge1,\ \rho\ge0).
\]
\end{proposition}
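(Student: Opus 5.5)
The argument reduces everything to the one-variable polynomial \(\pi_k\) and its two-step recursion, with the parity classes of \(\ell\) handled separately. Set \(t_\ell=b_S+\ell\), so that \(a_\ell=\pi_k(t_\ell)^2\). The assumption \(\Lambda_0>0\) and \eqref{eq:app-zero-bottom} give that either \(m\) is even, or \(m\) is odd and \(k\le(m-1)/2\). In both cases \(t_0=b_S\) and \(t_1=b_S+1\) lie strictly to the right of the largest root \(k-1\) of \(\pi_k\), or at least avoid all roots.

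The first step is to show that \(t_0-k+1>0\). If \(m\) is odd, then \(b_S\ge k\), so \(t_0-k+1\ge1\). If \(m\) is even, \(b_S\) is a half-integer. The roots of \(\pi_k\) are integers of parity \(k-1\), so nonvanishing is automatic, but positivity of \(t-k+1\) can fail when \(b_S<k-1\). In that case I would use symmetry: \(\pi_k(-t)=(-1)^k\pi_k(t)\), since the root set is symmetric. Hence \(|\pi_k|\) is even, and the recursion gives, for \(t>k-1\),
\[
\frac{|\pi_k(t+2)|}{|\pi_k(t)|}=\frac{t+k+1}{t-k+1}>1 .
\]
For \(0\le t<k-1\), the same ratio is \(|t+k+1|/|t-k+1|\), and this is still \(>1\) because \(t+k+1>|t-k+1|\) whenever \(t>-1\) and \(k\ge1\). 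So \(|\pi_k(t+2)|>|\pi_k(t)|\) for every \(t\ge0\) with \(\pi_k(t)\ne0\). Since the roots have a fixed integer parity, nonvanishing propagates in steps of 2. This is the key inequality, and it yields \(a_\ell<a_{\ell+2}\) for all \(\ell\ge0\), because \(t_\ell\ge b_S\ge0\).

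The main obstacle is the comparison \(a_0<a_1\) between the two parity classes, since the recursion never links them. Together with the step-2 monotonicity, this single inequality does not immediately give the full chain \(a_0<a_1<a_2<\cdots\). I would instead prove \(|\pi_k(t+1)|>|\pi_k(t)|\) directly for \(t\ge0\) with \(\pi_k(t)\ne0\). Pair the factors as \(\prod_j|t+1+k+1-2j|\) versus \(\prod_j|t+k+1-2j|\), and write both as products over the shifted root lattice. The quotient telescopes in the shift: \(\pi_k(t+1)/\pi_k(t)\) relates to \(\Gamma\)-ratios via
\[
|\pi_k(t)|=2^k\left|\frac{\Gamma\left(\frac{t+k+1}{2}\right)}{\Gamma\left(\frac{t-k+1}{2}\right)}\right|.
\]
Monotonicity in \(t\) then follows from log-convexity of \(\Gamma\): the function \(\log\Gamma(x+k/ \!\!\;\,)\) behaves suitably, since \(\psi\) is increasing, so \(\psi\bigl(\tfrac{t+k+1}{2}\bigr)-\psi\bigl(\tfrac{t-k+1}{2}\bigr)>0\) where the arguments are positive. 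The regions with negative arguments are handled by the reflection formula, or by the pairing argument used above. If this derivative argument proves delicate, the fallback is to compare \(\pi_k(t+1)^2\) and \(\pi_k(t)^2\) factor by factor after matching each root in one lattice with its nearest neighbour in the other.

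Granting strict monotonicity \(a_0<a_1<a_2<\cdots\), the remaining conclusions are quick. The set \(\mathfrak L_0\) equals \(\{0\}\), so \(\mathscr Y_0\) consists of the constants. The identities \eqref{eq:app-lambda-zero-explicit} and \eqref{eq:app-lambda-one-explicit} follow by specialising the \(\Gamma\)-formula above to \(t=b_S\) and \(t=b_S+1\). Finally, since \(p_{k,\ell}\) is increasing in \(\rho\),
\[
p_{k,\ell}(\rho)-\Lambda_0\ge a_\ell-a_0\ge a_1-a_0=\delta_{\Ss}\qquad(\ell\ge1,\ \rho\ge0).
\]
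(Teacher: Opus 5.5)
\textbf{There is a genuine gap, at exactly the step you flag as the main obstacle.}

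What works: your two-step monotonicity $a_\ell<a_{\ell+2}$ is correct, and the case $m$ odd ($b_S\ge k$, all factors positive) is fine. But the full chain needs $|\pi_k(t+1)|>|\pi_k(t)|$ at every $t=b_S+\ell$. When $m$ is even with $b_S<k-1$ (e.g.\ $m=k=2$, where $\pi_2(t)=t^2-1$ and $t_\ell=\ell+\tfrac12$), this inequality is the entire content of the proposition, and your proposal does not prove it.

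\begin{itemize}
\item The claim as you state it, for all $t\ge0$ with $\pi_k(t)\ne0$, is false. For instance $|\pi_2(0)|=1>0=|\pi_2(1)|$ and $|\pi_2(0.1)|=0.99>0.21=|\pi_2(1.1)|$. It holds only on the half-integer lattice, so any proof must use half-integrality.
\item Your digamma argument only covers the region where $\tfrac{t+k+1}{2}$ and $\tfrac{t-k+1}{2}$ are both positive, i.e.\ $t>k-1$. There all factors are positive and monotonicity is trivial.
\item For $t<k-1$, $|\pi_k|$ vanishes at $k-1$ (and at other integers of that parity) while being positive at the lattice points below. So it is not monotone in the continuous variable $t$, and no derivative argument can succeed there.
\item Appealing to ``the reflection formula, or the pairing argument used above'' only names where the proof must happen. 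The two-step recursion never links $t$ with $t+1$.
\item A naive termwise comparison of each factor $x_j=t+k+1-2j$ with $x_j+1$ fails, because every negative factor $x_j\le-\tfrac32$ shrinks in absolute value under $t\mapsto t+1$.
\end{itemize}

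\textbf{The missing idea, as in the paper.} For half-integer $t<k-1$, put $U_t=\tfrac{t+k+1}{2}$ and $C_t=\tfrac{k+1-t}{2}>1$. The reflection formula gives
\[
 |\pi_k(t)|=\frac{2^k}{\pi}\Gamma(U_t)\Gamma(C_t)|\sin\pi C_t|,
 \qquad
 |\pi_k(t+1)|=\frac{2^k}{\pi}\Gamma\bigl(U_t+\tfrac12\bigr)\Gamma\bigl(C_t-\tfrac12\bigr)|\cos\pi C_t|.
\]
Since $C_t$ is an odd multiple of $\tfrac14$, $|\sin\pi C_t|=|\cos\pi C_t|$, so the oscillating factor cancels. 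The ratio is then $G(U_t)/G(C_t-\tfrac12)$, where $G(x)=\Gamma(x+\tfrac12)/\Gamma(x)$ is strictly increasing and $U_t-(C_t-\tfrac12)=t+\tfrac12>0$. Hence the ratio exceeds $1$.

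\textbf{Alternatively, your factor-by-factor fallback can be rescued with a different matching.} In $\prod_j|x_j+1|/|x_j|$:
\begin{itemize}
\item pair each negative $x_j\le-\tfrac32$ with the positive factor $|x_j|-1$; the two contributions multiply to $1$;
\item a factor $x_j=-\tfrac12$ contributes exactly $1$;
\item the top factor $t+k-1$ is never used, because $t+k>k-1-t$, so the ratio is at least $\tfrac{t+k}{t+k-1}>1$.
\end{itemize}
With either argument supplied, the rest of your proposal goes through: the explicit $\Gamma$-formulas at $t=b_S$ and $t=b_S+1$, and the bound $p_{k,\ell}(\rho)-\Lambda_0\ge a_1-a_0$.
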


\begin{proof}
Suppose first that \(m\) is odd, so that \(b_S\) is a nonnegative integer.
If \(b_S\le k-1\), choosing \(\ell=k-1-b_S\) produces the zero factor
\[
 \ell+b_S-k+1=0
\]
in \(a_\ell\), contrary to \(\Lambda_0>0\).  Hence \(b_S\ge k\).
Every factor in \(\pi_k(b_S+\ell)\) is then positive and strictly increasing
with \(\ell\), which proves the assertion in this case.

Suppose next that \(m\) is even.  Then
\[
 t=b_S+\ell
\]
is a positive half-integer.  If \(t>k-1\), every factor in \(\pi_k(t)\) is
positive and increases strictly when \(t\) is replaced by \(t+1\).  It remains
to consider \(t<k-1\).  Put
\[
 U_t=\frac{t+k+1}{2},
 \qquad
 C_t=\frac{k+1-t}{2}.
\]
The Gamma-product identity and the reflection formula give
\[
 |\pi_k(t)|
 =
 \frac{2^k}{\pi}
 \Gamma(U_t)\Gamma(C_t)|\sin(\pi C_t)|.
\]
For \(t+1\), the corresponding expression is
\[
 |\pi_k(t+1)|
 =
 \frac{2^k}{\pi}
 \Gamma\!\left(U_t+\frac12\right)
 \Gamma\!\left(C_t-\frac12\right)
 |\cos(\pi C_t)|.
\]
The number \(C_t\) is a quarter-integer, and hence
\[
 |\sin(\pi C_t)|=|\cos(\pi C_t)|.
\]
Define
\[
 G(x)=\frac{\Gamma(x+1/2)}{\Gamma(x)},
 \qquad x>0.
\]
If \(\psi=\Gamma'/\Gamma\), then
\[
 \frac{\dd}{\dd x}\log G(x)
 =
 \psi\!\left(x+\frac12\right)-\psi(x)>0,
\]
because
\[
 \psi'(x)=\sum_{j=0}^\infty\frac{1}{(x+j)^2}>0.
\]
Thus \(G\) is strictly increasing.  Since
\[
 U_t-\left(C_t-\frac12\right)=t+\frac12>0,
\]
we obtain
\[
 \frac{|\pi_k(t+1)|}{|\pi_k(t)|}
 =
 \frac{G(U_t)}{G(C_t-1/2)}
 >1.
\]
This proves strict monotonicity in the even-dimensional spherical case as
well.  Equations \eqref{eq:app-lambda-zero-explicit} and
\eqref{eq:app-lambda-one-explicit} now follow from
\eqref{eq:app-general-spectral-bottom}.
\end{proof}

\begin{remark}
The number \(\delta_{\Ss}\) is a gap between the bottoms of the constant
spherical sector and its orthogonal complement.  It is not an open gap above
\(\Lambda_0\) in the full \(L^2\)-spectrum, since the constant spherical
sector already contributes the entire interval \([\Lambda_0,\infty)\).
\end{remark}

\subsection{The cylindrical Hardy coefficient}

Return to the conformal coordinates
\[
 z=(x,y)\in
 \R^{n-1}\times(\R^{m+1}\setminus\{0\}),
 \qquad
 r=|y|.
\]
Let
\[
 \gamma=\frac{N-2k}{2},
 \qquad
 u=\mathscr Cw=r^\gamma w.
\]
The conformal covariance of \(P_k\) gives the pointwise identity
\[
 P_k(r^\gamma w)
 =
 r^{(N+2k)/2}(-\Delta_{\R^N})^kw.
\]
Since \(\dd V_M=r^{-N}\dd z\), it follows that
\begin{align}
 \int_M(P_ku)u\,\dd V
 &=
 \int_{\R^N}(-\Delta)^kw\,w\,\dd z,
 \label{eq:app-conformal-energy}\\
 \int_Mu^2\,\dd V
 &=
 \int_{\R^N}\frac{w^2}{|y|^{2k}}\,\dd z.
 \label{eq:app-conformal-l2}
\end{align}
Combining \eqref{eq:app-full-spectrum} with
\eqref{eq:app-conformal-energy}--\eqref{eq:app-conformal-l2} yields
\begin{equation}\label{eq:app-sharp-hardy}
 \int_{\R^N}(-\Delta)^kw\,w\,\dd z
 \ge
 \Lambda_0
 \int_{\R^N}\frac{w^2}{|y|^{2k}}\,\dd z
\end{equation}
for \(w\in C_c^\infty(\R^N\setminus\R^{n-1})\).

The coefficient in \eqref{eq:app-sharp-hardy} is sharp in this test class.
Indeed, choose an \(L^2\)-normalized hyperbolic spectral
function \(f_\varepsilon\) whose Helgason--Fourier transform is supported in
\(0<\rho<\varepsilon\), and tensor it with a normalized constant spherical
harmonic.  Its Rayleigh quotient for \(P_k\) tends to
\(p_{k,0}(0)=\Lambda_0\) as \(\varepsilon\downarrow0\).  The space
\(C_c^\infty(M)\) is a form core for the positive elliptic realization of
\(P_k\), so these functions can be approximated in the form norm by compactly
supported smooth functions.  The conformal transfer then gives a sequence in
\(C_c^\infty(\R^N\setminus\R^{n-1})\) whose quotient in
\eqref{eq:app-sharp-hardy} tends to \(\Lambda_0\).

With Yang's variables
\[
 n_Y=n-1,
 \qquad
 m_Y=m+1,
\]
the degree-\(\ell\) factorization \eqref{eq:app-integer-multiplier} is
\[
 \prod_{i=0}^{k-1}
 \left[
 -\Delta_{\Hh^n}-\frac{(n-1)^2}{4}
 +\frac{(m+1+2\ell-2k+4i)^2}{4}
 \right].
\]
Consequently,
\[
 c_{m+1,k}
 =
 \min_{\ell\ge0}
 \prod_{i=0}^{k-1}
 \frac{(m+1+2\ell-2k+4i)^2}{4}
 =
 \Lambda_0.
\]
For \(k\ge2\), the scattering calculation thus gives an independent
geometric derivation of the sharp Hardy coefficient and the pure Hardy
inequality that occur in Yang's polyharmonic Hardy--Sobolev--Maz'ya theorem
\cite[Theorem~1.2 and Corollary~5.3]{Yang2021PolyharmonicHSM}.  For \(k=1\),
the corresponding cylindrical Hardy inequality is the classical result of
Maz'ya used in Proposition~\ref{prop:positive-threshold}.  The scattering
calculation does not, by itself, prove the positive critical Sobolev
remainder
\[
 \|w\|_{\Ecal}^2\ge C\|w\|_{L^q(\R^N)}^2,
 \qquad C>0,
\]
which for \(k\ge2\) follows from Yang's theorem and for \(k=1\) follows from
Maz'ya's inequality.

\bibliographystyle{plainurl}
\bibliography{project}

\bigskip

\begin{flushleft}
\small
\textsc{Q. Hua}\\
School of Mathematical Sciences, University of Science and Technology of China\\
\emph{Email address:}
\href{mailto:qqhua@ustc.edu.cn}
{\nolinkurl{qqhua@ustc.edu.cn}}

\medskip

\textsc{J. Li}\\
School of Mathematical Sciences, University of Science and Technology of China\\
\emph{Email address:}
\href{mailto:jungangli@ustc.edu.cn}
{\nolinkurl{jungangli@ustc.edu.cn}}

\medskip

\textsc{C. Tao}\\
School of Mathematics and Statistics, Beijing Technology and Business University\\
\emph{Email address:}
\href{mailto:taochunxia01@126.com}
{\nolinkurl{taochunxia01@126.com}}
\end{flushleft}

\end{document}